\documentclass[11pt]{amsart}
\usepackage{amsmath,amsthm,amssymb}
\usepackage{mathrsfs}
\usepackage{tikz}
\usepackage{tikz-cd}
\usetikzlibrary{arrows.meta}
\usepackage{subcaption} 
\usepackage[margin=1in,footskip=0.25in]{geometry}
\usepackage{enumerate}
\usepackage{multicol, multirow,mathrsfs}
\usepackage{hyperref}
\newtheorem{theorem}{Theorem}[section]
\newtheorem{lemma}[theorem]{Lemma}

\newtheorem{proposition}[theorem]{Proposition}
\newtheorem{corollary}[theorem]{Corollary}
\newtheorem*{TheoremA}{Theorem A}
\newtheorem*{TheoremA'}{Theorem A'}
\newtheorem*{TheoremB}{Theorem B}
\newtheorem*{TheoremC}{Theorem C}

\theoremstyle{remark}
\newtheorem{remark}[theorem]{Remark}

\newtheorem*{claim*}{Claim}

\newcommand{\C}{\ensuremath{\mathbb{C}}}
\newcommand{\Z}{\mathbb{Z}}
\newcommand{\F}{\mathbb{F}}
\newcommand{\R}{\ensuremath{\mathbb{R}}}

\renewcommand{\O}{\ensuremath{\mathbb{O}}}
\newcommand{\s}[1]{\ensuremath{\mathsf{#1}}}
\newcommand{\g}[1]{\ensuremath{\mathfrak{#1}}}

\DeclareMathOperator{\tr}{tr}

\DeclareMathOperator{\Ad}{Ad}

\DeclareMathOperator{\ad}{ad}
\DeclareMathOperator{\Exp}{Exp}

\DeclareMathOperator{\Isom}{Isom}

\DeclareMathOperator{\spann}{span}

\DeclareMathOperator{\Ric}{Ric}

\DeclareMathOperator{\rank}{rank}

\renewcommand{\H}{\ensuremath{\mathbb{H}}}

\newcommand{\SO}{\ensuremath{\mathsf{SO}}}

\newcommand{\Un}{\ensuremath{\mathsf{U}}}
\newcommand{\SU}{\ensuremath{\mathsf{SU}}}
\newcommand{\Sp}{\ensuremath{\mathsf{Sp}}}

\newcommand{\Spin}{\ensuremath{\mathsf{Spin}}}

\begin{document}
	\title[The index of symmetry and homogeneous fibrations]{The index of symmetry and homogeneous fibrations}
	\author[A.~Cidre-D\'iaz]{\'Angel~Cidre-D\'iaz}
	\address{Citmaga, Universidade de Santiago de Compostela, Spain}
	\email{angel.cidre.diaz@usc.es}
	\author[C.~Olmos]{Carlos E.~Olmos}
	\address{CIEM, CONICET, Argentina}
	\email{carlos.olmos@unc.edu.ar}
	\author[A.~Rodr\'iguez-V\'azquez]{Alberto~Rodr\'iguez-V\'azquez}
	\address{Université Libre de Bruxelles, Belgium}
	\email{alberto.rodriguez.vazquez@ulb.be}
	
	\begin{abstract}
		We develop new tools to compute the index of symmetry in the context of homogeneous fibrations.  As a consequence of our results, we determine the index of symmetry of every homogeneous space diffeomorphic to a compact rank-one symmetric space, and of every homogeneous $\mathsf{S}^1$--bundle over a compact irreducible symmetric space. Moreover, we construct irreducible homogeneous metrics whose leaves of symmetry are symmetric spaces of arbitrarily large rank.
	\end{abstract}

	\thanks{The first and third authors have been supported by grant PID2022-138988NB-I00 funded by MICIU/AEI/10.13039/501100011033 (Spain), and by grants ED431F 2020/04 and ED431C 2023/31 (Xunta de Galicia, Spain). The first author also acknowledges the support of an FPU fellowship. The second author was supported by the visiting program of PPGM, UFSCar, and partially supported by CIEM-CONICET. The third author was further supported by the Horizon Europe research and innovation programme under Marie Sklodowska-Curie Actions with grant agreement 101149711 - HOLYFLOW}

	\subjclass[2020]{53C40, 53C35, 53C42}
	\keywords{Index of symmetry, compact-type symmetric spaces of rank-one, compact homogeneous spaces}
	\maketitle
	\vspace{-0.5ex}
	\section{Introduction}
	\label{sec:intro}
	A Riemannian manifold $M$ is \emph{symmetric} if the geodesic reflection at each point is a global isometry. Alternatively, a complete Riemannian manifold $M$ is {symmetric} if and only if for every $p \in M$, the tangent space $T_p M$ is spanned by \emph{infinitesimal transvections} at $p$, that is, by Killing vector fields $X$ satisfying $(\nabla X)_p = 0$. This motivates the introduction of the so-called \emph{index of symmetry} of a Riemannian manifold, which is defined as 
	\[
	i_{\mathfrak{s}}(M):= \min_{p \in M} \{\dim(\mathfrak{s}_p)\}, \quad \text{where} \enspace \mathfrak{s}_p = \{X_p \colon X \, \text{is Killing}\, \ \text{and} \ (\nabla X)_p = 0\}.
	\] 
	The index of symmetry of a complete Riemannian manifold $M$ reaches its maximum value (equal to $\dim(M)$) if and only if $M$ is symmetric, and thus, it provides a geometric invariant that, in a certain sense, measures how far the Riemannian metric on $M$ is from being symmetric.
	
	Several structural results concerning the index of symmetry have been established in~\cite{ORT}. For instance, the subspaces spanned by infinitesimal transvections integrate to a foliation of $M$ whose leaves (which are not necessarily of the same dimension) are totally geodesic symmetric submanifolds. The study of the index of symmetry has been carried out mainly in the setting of compact homogeneous (non-symmetric) spaces $M=\mathsf{G}/\mathsf{H}$. In particular, the associated foliation is a $\mathsf{G}$--invariant integrable regular distribution, see~\cite{ORT}. Within this framework, some results are known; for example, certain structural results for the index of symmetry of Kähler flag manifolds, see~\cite{podesta}; or the classification of homogeneous spaces admitting metrics with coindex of symmetry less than or equal to $4$, see~\cite{BOR} and \cite{reggiani}. All in all, there has not been much work on the computation of the index of symmetry for arbitrary $\mathsf{G}$--invariant metrics, and the known results are restricted to naturally reductive metrics, see for instance \cite[Theorem~A and Theorem~B]{ORT}.
	
	The most fundamental examples of symmetric metrics are those defined on the \emph{compact rank-one symmetric spaces} (abbreviated as {CROSSes}). The simply connected ones are:
	\[\s{S}^{n}, \quad \C \mathsf{P}^{n}, \quad \mathbb{H} \mathsf{P}^{n}, \quad \mathbb{O} \mathsf{P}^{2}.   \]
	When endowed with their symmetric metrics, the simply connected CROSSes constitute all the {compact, simply connected Riemannian manifolds that are homogeneous and isotropic}, i.e., those whose geometry is the same at every point and in every direction, see~\cite{Szabo2}. They also exhaust all {simply connected compact harmonic manifolds}, confirming the {Lichnerowicz conjecture} in the compact case, see~\cite{Szabo1}. From the viewpoint of geometric structures, CROSSes serve as the {fundamental models of K\"ahler and quaternionic-K\"ahler geometry}. For instance, any compact K\"ahler or quaternionic-K\"ahler manifold with positive sectional curvature is isometric to $\mathbb{C}\mathsf{P}^n$ or $\mathbb{H}\mathsf{P}^n$, respectively, see~\cite{berger}. In fact,  in the study of the interplay between topology and positive curvature, CROSSes provide the {canonical examples} on which many rigidity results are based, see e.g.\,\cite{wilking}, and appear also in {curvature pinching theorems}, see e.g.\,\cite{brendle-schoen}.
	
	The main goal of this article is to establish general structural results for computing the index of symmetry of general $\mathsf{G}$--invariant metrics (not necessarily normal homogeneous or naturally reductive). A first application of these tools is the computation of the index of symmetry of all homogeneous spaces diffeomorphic to CROSSes.
	The non-symmetric homogeneous spaces diffeomorphic to a compact rank-one symmetric space (up to scaling) are isometric to one in the following list:
	\[\mathsf{S}^n_{\F,\tau},\enspace \text{with $\F\in\{\C,\H,\O \}$}; \quad \mathsf{S}^{4n+3}_{\tau_1,\tau_2,\tau_3},\quad \text{and} \quad \C\mathsf{P}^{2n+1}_{\tau}, \]
	where $\tau>0$ and  $\tau_1\ge\tau_2\ge\tau_3>0$. Geometrically, these parameters deform the standard round metric of curvature $1$ on the spheres, and the standard symmetric metric on $\mathbb{C}\mathsf{P}^{2n+1}$, by rescaling the metric along the fibers of specific homogeneous fibrations. More precisely, the metrics on the spheres are obtained by rescaling the round metric along the fibers of the Hopf fibrations. On the other hand, the metrics $\mathbb{C}\mathsf{P}^{2n+1}_{\tau}$ are constructed by scaling the symmetric metric along the $\mathsf{S}^2$--fibers of the twistor fibration over the quaternionic projective space $\mathbb{H}\mathsf{P}^n$. We refer to~\S\ref{subsec:homCROSSes} for the construction of these homogeneous Riemannian metrics. Our first main result of this article is the following:
	\begin{TheoremA}\label{TheoremB}
		Let $M$ be a simply connected non-symmetric homogeneous space diffeomorphic to a compact rank-one symmetric space. Then its index of symmetry is stated as follows:
		\smallskip
		\begin{itemize}
			\item If $M=\mathsf{S}^{n}_{\mathbb{F},\tau}$, then 
			\[
			\begin{aligned}
				i_{\mathfrak{s}}(M) =  \left\{ \begin{array}{ll}
					1 \quad &\textrm{if $\mathbb{F}=\mathbb{C}$,} \\ \smallskip
					3 \quad &\textrm{if $\mathbb{F}=\mathbb{H}$ and $\tau = \frac{1}{2}$,}  \\ \smallskip
					7 \quad &\textrm{if $\mathbb{F}=\mathbb{O}$ and $\tau = \frac{1}{2}$,} \\ \smallskip
					0 \quad &\textrm{otherwise}.
				\end{array} \right.
			\end{aligned}
			\] \smallskip
			\item If $M=\mathsf{S}^{4n+3}_{\tau_1,\tau_2,\tau_3}$, then
			\[\hspace{1.95cm}
			\begin{aligned}
				i_{\mathfrak{s}}(M) =  \left\{ \begin{array}{ll}
					1 \quad &\textrm{if $\tau_1=\tau_2+\tau_3$, and $\tau_1\neq\tau_2\neq\tau_3$,} \\[1ex]
					1 \quad &\textrm{if $\tau_3\neq\tau_1=\tau_2\in\left\{\tfrac{1}{2},1\right\}$,}  \\[1ex]
					1 \quad &\textrm{if $\tau_1\neq\tau_2=\tau_3\in\left\{\tfrac{1}{2},1\right\}$,}\\[1ex]
					3 \quad &\textrm{if $\tau_{1}=\tau_{2}=\tau_{3}=\tfrac{1}{2}$,} \\[1ex]
					0 \quad &\textrm{otherwise.} \\[1ex]
				\end{array} \right.
			\end{aligned}
			\]
			\smallskip

			\item If $M=
			\mathbb{C}\mathsf{P}^{2n+1}_{\tau}$, then $i_{\mathfrak{s}}(M)=0$, for all $\tau>0$. 
		\end{itemize}
	\end{TheoremA}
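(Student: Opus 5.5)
The plan rests on the general structure of the index of symmetry for a compact homogeneous space $M=\mathsf{G}/\mathsf{H}$ (cf.~\cite{ORT}). The distribution of symmetry $\mathfrak{s}$ is $\mathsf{G}$--invariant and integrable, and its leaves are totally geodesic symmetric submanifolds. Since $M$ is compact and $\mathfrak{s}$ is $\mathsf{G}$--invariant, we may replace $\mathsf{G}$ by the full isometry group. At the base point $o$, the space $\mathfrak{s}_o$ is an $\Ad(\mathsf{H})$--invariant subspace of $T_oM\cong\mathfrak{m}$. The first step is therefore to decompose $\mathfrak{m}$ into $\mathsf{H}$--irreducible modules adapted to the relevant homogeneous fibration $F\to M\to B$. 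These are the Hopf fibrations $\mathsf{S}^1,\mathsf{S}^3,\mathsf{S}^7\to \mathsf{S}^n\to\mathbb{F}\mathsf{P}$ and the twistor fibration $\mathsf{S}^2\to\C\mathsf{P}^{2n+1}\to\H\mathsf{P}^n$. Typically $\mathfrak{m}=\mathfrak{v}\oplus\mathfrak{h}'$, with $\mathfrak{v}$ tangent to the fibre and $\mathfrak{h}'$ horizontal. The candidates for $\mathfrak{s}_o$ are then sums of isotypic pieces.

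The second step is to rule out horizontal contributions. Suppose $\mathfrak{s}_o$ contained a nonzero horizontal $\mathsf{H}$--submodule. Because $\mathsf{H}$ acts irreducibly on the horizontal space (for the Hopf fibrations, $\mathsf{H}$ contains $\Sp(n)$ or $\mathsf{Spin}(7)$ acting transitively on horizontal unit spheres), $\mathfrak{s}_o$ would contain all of $\mathfrak{h}'$. Integrability and $\mathsf{G}$--invariance then force $\mathfrak{s}_o\supset[\mathfrak{h}',\mathfrak{h}']_{\mathfrak{m}}$, which is the vertical part. This gives $\mathfrak{s}=TM$, so $M$ would be symmetric, a contradiction.

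Hence $\mathfrak{s}$ is contained in the vertical distribution, and its leaves lie in the fibres. By total geodesicity, $\mathfrak{s}_o$ must be a Lie triple system in the fibre, and it must be an $\mathsf{H}$--submodule of $\mathfrak{v}$. The possibilities are:
\begin{itemize}
\item for $\mathbb{F}=\C$, the fibre is one-dimensional, so $\mathfrak{s}_o=\mathfrak{v}$ or $0$;
\item for $\mathbb{F}=\H,\O$, irreducibility of $\mathfrak{v}$ under $\Sp(1)$ or $\mathsf{Spin}(7)$ again gives $\mathfrak{s}_o=\mathfrak{v}$ or $0$;
\item for the three-parameter spheres, the isotropy is only $\Sp(n)$ and $\mathfrak{v}=\mathfrak{sp}(1)$ splits into three lines $\R e_i$, so any sub-sum is possible a priori;
\item for $\C\mathsf{P}^{2n+1}_\tau$, $\mathfrak{v}\cong T\mathsf{S}^2$ is irreducible under $\Un(1)\times\Sp(n)$.
\end{itemize}

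The third step, and the main obstacle, is the explicit test of which vertical vectors are infinitesimal transvections. For $v\in\mathfrak{v}$, we need a Killing field $X$ with $X_o=v$ and $(\nabla X)_o=0$. Using the reductive formula
\[
(\nabla X^*)_o Y=-[X,Y]_{\mathfrak{m}}+U(X,Y),
\]
we must decide whether $v$ admits a correction $Z\in\mathfrak{h}$, or by an element of the centralizer when the isometry group is larger, such that the resulting endomorphism vanishes. Equivalently, we use the criterion that the Cartan-type condition holds for $\mathfrak{g}$ relative to the given metric. For instance, in the $\C$ case, the Reeb/Hopf field $\xi$ together with a $\mathfrak{u}(1)$ element of $\Un(n+1)$ acting on the fibre gives a parallel Killing direction for every $\tau$, which yields index $1$. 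This is the kind of general fibration tool I expect the paper develops.

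Carrying out this computation block-wise on $\mathfrak{v}$ and $\mathfrak{h}'$ yields algebraic conditions on the parameters:
\begin{itemize}
\item For $\mathbb{F}=\H$, the condition is $\tau=\tfrac12$ (the parameter at which $\mathsf{S}^3$ acts so that the fibres are flat-transvected) and likewise for $\O$; otherwise the index is $0$.
\item For $\mathsf{S}^{4n+3}_{\tau_1,\tau_2,\tau_3}$, the bracket $[e_i,e_j]=2e_k$ gives conditions involving $\tau_i-\tau_j\pm\tau_k$, producing the cases $\tau_1=\tau_2+\tau_3$ and $\tau_i=\tau_j\in\{\tfrac12,1\}$. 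Care is needed when two parameters coincide, since then the isometry group enlarges by a $\Un(1)$ and extra Killing fields become available.
\item For $\C\mathsf{P}^{2n+1}_\tau$, the analogous condition has no solution for $\tau\neq1$, while $\tau=1$ is the symmetric metric; hence the index is $0$.
\end{itemize}
Finally, I would double-check these values against the homogeneity and irreducibility facts above, in particular verifying in each case that the vertical candidate leaf is genuinely totally geodesic and symmetric.
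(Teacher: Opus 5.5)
Your set-up matches the paper's framework. $\mathfrak{s}_o$ is isotropy invariant, the horizontal distribution is bracket generating so $\mathfrak{s}\subset\mathcal{V}$, and since the fibres are totally geodesic a transvection of $M$ restricts to one of the fibre (Lemma~\ref{lem:transvS3red}, Theorem~\ref{th:2-isotropy_fibration}). The gap is that the step which produces the parameter values is only asserted. Write a candidate as $X=\xi+Z$ with $\xi$ vertical and $Z\in\mathfrak{h}$. The vertical and mixed blocks of $(\nabla X^*)_o$ are governed by the fibre, so everything hinges on the horizontal block. For $\langle\cdot,\cdot\rangle_\lambda$ one has $2\langle(\nabla_{v^*}X^*)_o,w^*_o\rangle_\lambda=(2-\lambda)\,b([v,w],\xi)-2\,b([Z,w],v)$ (compare \eqref{eq:LClambda} and \eqref{eq:Xtrans}). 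You must show that no $Z$ cancels the first term except at special parameters, and this needs a genuine argument:
\begin{itemize}
\item In Proposition~\ref{prop:verticaltransconversedimge2} two facts are used: $[\mathfrak{m}_2,\mathfrak{m}_2]_{\mathfrak{m}_1}=\mathfrak{m}_1$, and $\xi_{\mathfrak{h}}$ centralizes $\mathfrak{m}_1$ (via the symmetric pair $(\mathsf{K},\mathsf{H})$). Together they make $\eta\mapsto b(\eta,\xi_{\mathfrak{m}_1})$ an $\mathsf{H}$-invariant functional on an irreducible module of dimension $\ge 2$. This forces $\lambda=2$, i.e.\ $\tau=\tfrac12$ for $\mathbb{O}$ and $\tau=1$ for $\mathbb{C}\mathsf{P}^{2n+1}_\tau$.
\item In Theorems~\ref{th:sph3param} and~\ref{th:sph2param} one averages over the centralizer $\mathsf{Sp}_1$ of $\mathsf{Sp}_n$, or uses transitivity of $\mathsf{Sp}_n$ on the horizontal unit sphere.
\end{itemize}
Your explanation of $\tau=\tfrac12$ (``fibres flat-transvected'') is not meaningful. ``Likewise for $\mathbb{O}$'' also hides that the two cases differ. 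For $\mathbb{O}$, $\tau=\tfrac12$ is $\lambda=2$ (Proposition~\ref{prop:verticaltrans}). For $\mathbb{H}$ it is the normal homogeneous metric, $\lambda=1$. In the presentation $\mathsf{Sp}_{n+1}/\mathsf{Sp}_n$ the fibre is the group $\mathsf{Sp}_1$ and $(\mathsf{K},\mathsf{H})$ is not a symmetric pair. Its transvections need the extra right $\mathfrak{sp}_1$ of $\mathsf{Sp}_{n+1}\mathsf{Sp}_1$, so the $\lambda=2$ mechanism is not what is at work. The paper instead uses Ziller's naturally reductive presentation together with uniqueness of the naturally reductive metric. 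Similarly, in the $\mathbb{C}$ case the central $\mathfrak{u}_1$ of $\mathfrak{u}_{n+1}$ \emph{is} the Hopf field. The second field must come from $\mathfrak{su}_{n+1}$, and proportionality of the two covariant derivatives at $o$ is exactly the Schur-lemma argument of Proposition~\ref{prop:s1bundlelow}.

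For the three-parameter spheres the gap is sharper. The relations $[e_i,e_j]=2e_k$ only encode the vertical block, i.e.\ whether the fibre $\mathsf{S}^3_{\tau_1,\tau_2,\tau_3}$ has a transvection: $\tau_1=\tau_2+\tau_3$, or two equal parameters. This fibre is a left-invariant metric on $\mathsf{Sp}_1$, non-symmetric unless $\tau_1=\tau_2=\tau_3$, so ``Lie triple system in the fibre'' is not the right test. The values $\tfrac12$ and $1$ are invisible at this level. The value $\tfrac12$ comes from the horizontal block (Theorem~\ref{th:sph2param}). The value $1$ comes from the identification with $\mathsf{S}^{4n+3}_{\mathbb{C},\tau}$ and its larger isometry group $\mathsf{U}_{2n+2}$.

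Carrying out the horizontal computation in the generic case $\tau_1>\tau_2>\tau_3$ (isometry algebra $\mathfrak{sp}_{n+1}$) gives a constraint your outline misses. A transvection must be $X=cX_1+Z$ with $Z\in\mathfrak{sp}_n$. The horizontal block then requires $b([Z,w],v)=c(1-\tau_1)\,b([v,w],X_1)$ for all horizontal $v,w$. Here $\mathrm{ad}(Z)$ commutes with right quaternionic multiplication on $\mathbb{H}^n$, while $b([\cdot,\cdot],X_1)$ is, up to a constant, the $2$-form of right multiplication by $i$. Hence $\tau_1=1$ and $Z=0$. This is Theorem~\ref{th:sph3param}: the index is $1$ if and only if $\tau_1=1$ and $\tau_2+\tau_3=1$. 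That is stronger than the first line of the statement as printed, so your claim that the computation ``produces'' the case $\tau_1=\tau_2+\tau_3$ is not what a correct execution yields.

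Finally, the bound $i_{\mathfrak{s}}\le 1$ away from $\tau_1=\tau_2=\tau_3$ needs that $\mathsf{S}^3_{\tau_1,\tau_2,\tau_3}$ has no totally geodesic surfaces unless it is round (Tsukada's result, used in Lemma~\ref{lem:transvS3red}). Your proposal leaves this open when it says any sub-sum is possible a priori.
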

	Notice that the metrics with $\tau=1$ or $\tau_1=\tau_2=\tau_3=1$ are excluded from the discussion as they are symmetric. Furthermore, although Theorem~\ref{TheoremB} is explicitly stated for simply connected manifolds, this classification completely determines the index of symmetry for all homogeneous spaces diffeomorphic to CROSSes. The only non-simply connected cases are the real projective spaces, and as we will detail in Remark~\ref{Remark::RHn}, their homogeneous metrics are exactly those induced by the spheres, and this covering preserves the index of symmetry. Moreover, as a consequence of~\cite{verdiani-ziller}, the only non-symmetric metrics with positive curvature and non-trivial index of symmetry in non-symmetric homogeneous spaces diffeomorphic to CROSSes are: $\mathsf{S}^{2n+1}_{\mathbb{C},\tau}$ with $\tau<4/3$ and $\tau\neq 1$, $\mathsf{S}^{4n+3}_{\mathbb{H},1/2}$ and $\mathsf{S}^{15}_{\mathbb{O},1/2}$.
	
	It is also worth noting that all these homogeneous metrics fall within the broad class of invariant metrics on homogeneous fibrations.
	
	More precisely, homogeneous fibrations are constructed by considering a triple of compact Lie groups $\mathsf{H}<\mathsf{K}<\mathsf{G}$, and the $\mathsf{G}$--equivariant submersion $\pi$ given~by
	\[F=\mathsf{K}/\mathsf{H}\rightarrow M=\mathsf{G}/\mathsf{H} \xrightarrow{\pi} B=\mathsf{G}/\mathsf{K}, \quad  \text{where $\pi(g\mathsf{H})=g\mathsf{K}$}. \]
	A natural choice of metrics in this context are the so-called canonical variations of a normal homogeneous metric of the total space. To be more specific, these are the metrics  $\langle\cdot,\cdot\rangle_{\lambda}$ on $\mathsf{G}/\mathsf{H}$ constructed by rescaling the length of the fibers with respect to a normal homogeneous metric of $\mathsf{G}/\mathsf{H}$. Canonical variations of homogeneous metrics have long been a powerful tool for constructing manifolds with special curvature properties. For instance, the first non-symmetric, positively curved examples, the {Berger spheres}~$\mathsf{S}^{2n+1}_{\C,\tau}$, with $\tau<4/3$, have a metric of this type, see~\cite{Berger1961,verdiani-ziller}. Building on this idea, Wallach showed that suitable canonical variations of normal homogeneous metrics yield positively curved metrics on the flag manifolds $\mathsf{SU}_3/\mathsf{T}^2$, $\mathsf{Sp}_3/\mathsf{Sp}_1^3$, and $\mathsf{F}_4/\mathsf{Spin}_8$, see~\cite{Wallach1972}. In the realm of Einstein manifolds, canonical variations have been also quite fruitful. Using these metrics, Jensen constructed Einstein examples on certain total spaces over symmetric bases such as the Stiefel manifolds $V_2(\mathbb{R}^n)$ or~$\mathsf{S}^{4n+3}_{\H,\tau}$ with $\tau=\tfrac{1}{2n+3}$, see~\cite{Jensen1973}; Ziller found non-symmetric Einstein metrics in $\C\mathsf{P}_{\tau}^{2n+1}$ with $\tau=\tfrac{1}{n+1}$, see~\cite{ziller1982}; and more generally, together with Wang, he extended this framework to produce Einstein homogeneous metrics on torus bundles over symmetric spaces, see~\cite{WangZiller1990}.
	
	In this work, we study the extension of infinitesimal transvections from the fiber to the total space of a homogeneous fibration endowed with a canonical variation $\langle\cdot,\cdot\rangle_\lambda$ of the normal homogeneous metric, see Equation~\eqref{eq:canvar} for the definition of $\langle\cdot,\cdot\rangle_\lambda$. A basic motivating observation is that non-trivial compact leaves of symmetry can only occur when such a homogeneous fibration exists, see Lemma~\ref{lemma:hmax} and Remark~\ref{rem:closedleaf}. This naturally leads to the investigation of the relation between the index of symmetry and homogeneous fibrations. In particular, we firstly focus on the situation in which the fiber $F$ is one-dimensional, showing that there always exists an infinitesimal transvection on the total space that is tangent to the fiber, see Proposition~\ref{prop:s1bundlelow}. As an application of this, we are able to prove the following result.
	
	\begin{TheoremB}
		Let $B$ be a symmetric space of compact type equipped with an irreducible symmetric metric not locally isometric to the oriented real Grassmannian of $2$-planes, and let $\mathsf{G}=\mathrm{Isom}^{0}(B)$.
		
		Then every $\mathsf{G}$--invariant non-symmetric metric of a homogeneous $\mathsf{S}^{1}$--fibration over $B$ has index of symmetry equal to $1$.
	\end{TheoremB}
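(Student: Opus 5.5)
We reduce to two inequalities, $i_{\mathfrak s}(M)\ge 1$ and $i_{\mathfrak s}(M)\le 1$.

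\emph{Setting.} Write $B=\mathsf{G}/\mathsf{K}$ with $\mathsf{K}$ the isotropy group. A homogeneous $\mathsf{S}^1$--fibration over $B$ is $M=\mathsf{G}/\mathsf{H}$ with $\mathsf{H}<\mathsf{K}$ of codimension one. Hence $\mathsf{K}$ has a nontrivial center, so $B$ is Hermitian symmetric. Let $\mathfrak{g}=\mathfrak{k}\oplus\mathfrak{p}$ be the Cartan decomposition. Then $\mathfrak{k}=\mathfrak{h}\oplus\mathbb{R}Z$ with $Z$ central in $\mathfrak{k}$ (up to a normal-metric-orthogonal choice), and $\mathfrak{m}=\mathbb{R}Z\oplus\mathfrak{p}$. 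Irreducibility of $B$ makes $\mathfrak{p}$ an irreducible $\mathsf{H}$--module. Excluding $\widetilde{G}_2(\mathbb{R}^n)$ is meant to rule out the case where $\mathfrak{p}$ splits further, or where $\mathbb{R}Z$ and $\mathfrak{p}$ share a common $\mathsf{H}$--type (e.g.\ $\mathfrak{so}_2\oplus\mathfrak{so}_{n-2}$ with $n-2=2$, or small coincidences). So I would first classify $\mathsf{G}$--invariant metrics via Schur's lemma: they are exactly the canonical variations $\langle\cdot,\cdot\rangle_\lambda$ of the normal metric, up to homothety, scaling $\mathbb{R}Z$ by $\lambda$ relative to $\mathfrak{p}$. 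This case check is where the hypothesis is used.

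\emph{Lower bound.} Proposition~\ref{prop:s1bundlelow} applies directly to these metrics. It gives an infinitesimal transvection tangent to the fiber at each point, so $i_{\mathfrak s}(M)\ge 1$.

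\emph{Upper bound.} I would show that for $\lambda\neq 1$ (the case $\lambda=1$ is symmetric, or at least excluded as non-symmetric only for specific spaces) no transvection has a nonzero $\mathfrak{p}$--component. The leaf of symmetry is $\mathsf{G}$--invariant, so its tangent space at $o$ is an $\mathsf{H}$--invariant subspace of $\mathfrak{m}$ containing $\mathbb{R}Z$. By irreducibility it is either $\mathbb{R}Z$ or all of $\mathfrak{m}$. If it were $\mathfrak{m}$, the metric would be symmetric, contradicting the hypothesis. Hence $\mathfrak{s}_o=\mathbb{R}Z$ and $i_{\mathfrak s}=1$.

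This route avoids heavy computation, but it relies on the $\mathsf{H}$--irreducibility of $\mathfrak{p}$ and on $\mathbb{R}Z\not\cong$ any submodule of $\mathfrak{p}$. \textbf{The main obstacle} is verifying these facts, via the list of irreducible Hermitian symmetric spaces, and confirming that the Grassmannian $\mathsf{SO}_n/\mathsf{SO}_2\mathsf{SO}_{n-2}$ is the only failure. One also has to account for low-dimensional coincidences such as $\mathsf{SO}_4$, and for cases where $\mathsf{G}$ acting on $M$ is not effective. If irreducibility fails somewhere unexpectedly, I would fall back on computing $\nabla X$ for Killing fields $X\in\mathfrak{g}$ directly, using the Levi-Civita formula for $\langle\cdot,\cdot\rangle_\lambda$.
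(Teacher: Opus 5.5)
Your proposal is correct and follows essentially the paper's route: reduce to an irreducible Hermitian symmetric base, check via the classification that $\mathfrak{p}$ is an $\mathsf{H}$--irreducible module except in the Grassmannian cases (the paper does this by citing the Dickinson--Kerr tables), and then apply Proposition~\ref{prop:s1bundlelow}, whose proof already contains your upper-bound argument through Theorem~\ref{th:2-isotropy_fibration}(i). Two small points: you should record that $\mathsf{G}$ is simple (since $B$ is not of group type), which that proposition requires; and your parenthetical claim that $\lambda=1$ is symmetric is false in general (over $\C\mathsf{P}^n$ the normal metric is locally the Berger sphere $\mathsf{S}^{2n+1}_{\C,\tau}$ with $\tau=\tfrac{n+1}{2n}\neq 1$ for $n\ge 2$), though this is harmless because your irreducibility argument never uses $\lambda\neq 1$.
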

	It turns out that all of the metrics described in Theorem~B are homothetic to Sasakian homogeneous metrics and give rise to natural examples of the celebrated Boothby–Wang fibration over a Kähler manifold (in this case Hermitian symmetric); see~\cite{boothbywang} and~\cite[\S 7.9]{Blair}.
	When $B=\mathsf{SO}_{n+2}/(\mathsf{SO}_n\times\mathsf{SO}_2)$, that is, $B$ is  the oriented Grassmannian of $2$-planes, the corresponding $\mathsf{S}^1$--fibration is diffeomorphic to $T_1\mathsf{S}^{n+1}$, the unit tangent bundle of $\mathsf{S}^{n+1}$. It was observed in~\cite[Example~6.3]{ORT} that there is an $\mathsf{SO}_{n+2}$--invariant metric on $T_1\mathsf{S}^{n+1}$ whose leaves of symmetry have dimension $n$, implying that the above result is sharp. However, the argument presented there relies on considerations that are not made fully explicit. In this article, we compute the index of symmetry of every $\mathsf{SO}_{n+2}$--invariant metric on $T_1\mathsf{S}^{n+1}$, see Theorem~\ref{th:indstiefel}.

	More generally, in this article we also consider the case in which the fiber $F$ of a homogeneous fibration has dimension greater than one.
	In this setting, we identify a distinguished canonical variation of the normal homogeneous metric, namely $\langle\cdot,\cdot\rangle_\lambda$ with $\lambda=2$, for which infinitesimal transvections of the fiber extend to infinitesimal transvections of the total space, see Proposition~\ref{prop:verticaltrans}. This phenomenon is specific to the metric corresponding to $\lambda=2$, as shown in Proposition~\ref{prop:verticaltransconversedimge2}.  Moreover, under certain technical assumptions, we observe that the leaf of symmetry coincides with the fiber if and only if $\lambda=2$, see Corollary~\ref{cor:indnotextension}.
	
	For homogeneous spaces diffeomorphic to CROSSes, whenever the index of symmetry is not trivial, the distribution of symmetry gives rise, in all known cases, to spherical leaves. In fact, the symmetric spaces that have so far been realized as leaves of symmetry of irreducible homogeneous spaces in the literature are essentially limited to compact Lie groups or spheres. A natural question, therefore, is whether every irreducible symmetric space can be realized as the leaf of symmetry of some irreducible homogeneous space. As a first step to address this question, we apply the results developed in Subsection~\ref{subsec:transext} to show that irreducible compact symmetric spaces of arbitrarily large rank can indeed be realized as leaves of symmetry.
	
	\begin{TheoremC}Let $M=\mathsf{G}/\mathsf{H}$ be a homogeneous space listed in Table~\ref{table:theoremb}, and consider the homogeneous fibration $\pi \colon \mathsf{G}/\mathsf{H} \to \mathsf{G}/\mathsf{K}$ over the base space $B = \mathsf{G}/\mathsf{K}$, with fiber $F = \mathsf{K}/\mathsf{H}$. Let $\langle\cdot,\cdot\rangle$ be a $\mathsf{G}$-invariant metric on $M$. Then the index of symmetry of $\langle\cdot,\cdot\rangle$ is nontrivial if and only if $\langle\cdot,\cdot\rangle$ is homothetic to the canonical variation $\langle\cdot,\cdot\rangle_{\lambda}$ with $\lambda=2$.
		
		Moreover, for the metric $\langle\cdot,\cdot\rangle_{2}$, the corresponding fibrations, consisting of the total space $M$, the base space $B$, and its leaf of symmetry $F$, are listed in Table~\ref{table:theoremb}.
		
		\begin{table}[h!]
			
			\centering
			
			\renewcommand{\arraystretch}{1.3} 
			
			\begin{tabular}{|c|c|c|c|}
				
				\hline
				
				$M = \mathsf{G}/\mathsf{H}$ & $B = \mathsf{G}/\mathsf{K}$ & $F = \mathsf{K}/\mathsf{H}$ & Comments \\
				
				\hline
				
				$\mathsf{SO}_{m+2n}/(\mathsf{SO}_m\times \mathsf{U}_{n})$ 
				
				& $\mathsf{SO}_{m+2n}/(\mathsf{SO}_m \times \mathsf{SO}_{2n})$ 
				
				& $\mathsf{SO}_{2n}/\mathsf{U}_{n}$ 
				
				& $n\geq 2$ \\
				
				\hline
				
				$\mathsf{SU}_{n+m}/\mathsf{S}(\mathsf{SO}_{n}\times \mathsf{U}_{1}\times \mathsf{U}_{m})$  
				
				& $\mathsf{SU}_{n+m}/\mathsf{S}(\mathsf{U}_{n} \times \mathsf{U}_{m})$
				
				& $\mathsf{SU}_{n}/\mathsf{SO}_{n}$ 
				
				& $n\geq 2$ \\
				
				\hline
				
				$\mathsf{Sp}_{m+n}/(\mathsf{Sp}_m\times \mathsf{U}_{n})$  
				
				& $\mathsf{Sp}_{m+n}/(\mathsf{Sp}_m \times \mathsf{Sp}_{n})$
				
				& $\mathsf{Sp}_{n}/\mathsf{U}_n$ 
				
				& $n\geq 3$ \\
				
				\hline
				
				$\mathsf{E}_6/(\mathsf{Spin}_7\mathsf{Spin}_3\mathsf{SO}_2)$ 
				
				& $\mathsf{E}_6/(\mathsf{Spin}_{10}\mathsf{SO}_2)$
				
				& $\mathsf{SO}_{10}/(\mathsf{SO}_{7}\times\mathsf{SO}_{3})$
				
				& \\
				
				\hline
				
				$\mathsf{E}_6/(\mathsf{Spin}_5\mathsf{Spin}_5\mathsf{SO}_2)$ 
				
				& $\mathsf{E}_6/(\mathsf{Spin}_{10}\mathsf{SO}_2)$
				
				& $\mathsf{SO}_{10}/(\mathsf{SO}_{5}\times\mathsf{SO}_{5})$
				
				& \\
				
				\hline
				
				$\mathsf{E}_7/(\mathsf{Spin}_7\mathsf{Spin}_5\mathsf{Sp}_1)$ 
				
				& $\mathsf{E}_7/(\mathsf{Spin}_{12}\mathsf{Sp}_1)$
				
				& $\mathsf{SO}_{12}/(\mathsf{SO}_{7}\times\mathsf{SO}_{5})$
				
				& \\
				
				\hline
				
				$\mathsf{E}_7/(\mathsf{Spin}_6\mathsf{Spin}_6\mathsf{Sp}_1)$ 
				
				& $\mathsf{E}_7/(\mathsf{Spin}_{12}\mathsf{Sp}_1)$
				
				& $\mathsf{SO}_{12}/(\mathsf{SO}_{6}\times\mathsf{SO}_{6})$
				
				& \\
				
				\hline
				
				$\mathsf{E}_7/(\mathsf{Sp}_4 \times\mathsf{SO}_2)$ 
				
				& $\mathsf{E}_7/(\mathsf{E}_{6}\times\mathsf{SO}_2)$
				
				& $\mathsf{E}_{6}/\mathsf{Sp}_{4}$ 
				
				&  \\
				
				\hline
				
				$\mathsf{E}_8/(\mathsf{SU}_8 \times\mathsf{Sp}_1)$ 
				
				& $\mathsf{E}_8/(\mathsf{E}_{7}\times \mathsf{Sp}_1)$
				
				& $\mathsf{E}_{7}/\mathsf{SU}_{8}$ 
				
				&  \\
				
				\hline
				
			\end{tabular}
			
			\vspace{0.2cm} 
			
			\caption{Homogeneous fibrations associated with the canonical variation. The canonical variation scales the metric along the fibers $F$, which arise as the leaves of symmetry.}
			
			\label{table:theoremb}
			
		\end{table}
	\end{TheoremC}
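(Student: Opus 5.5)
The plan has three stages: pin down all $\mathsf{G}$--invariant metrics on each $M=\mathsf{G}/\mathsf{H}$ of Table~\ref{table:theoremb}, show that $\langle\cdot,\cdot\rangle_2$ has the fiber $F$ as leaf of symmetry, and rule out every other metric. Fix a bi-invariant metric $Q$ on $\mathfrak{g}$ and the reductive decomposition $\mathfrak{g}=\mathfrak{h}\oplus\mathfrak{m}$ with $\mathfrak{m}=\mathfrak{p}\oplus\mathfrak{n}$, where $\mathfrak{k}=\mathfrak{h}\oplus\mathfrak{p}$ and $\mathfrak{n}=\mathfrak{k}^{\perp}$.

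\textbf{Step 1 (invariant metrics).} Case by case, I check that $\mathfrak{p}$ and $\mathfrak{n}$ are $\mathsf{H}$--irreducible and mutually inequivalent. For $\mathfrak{p}$ this is the isotropy representation of the irreducible symmetric space $F=\mathsf{K}/\mathsf{H}$. For $\mathfrak{n}$ I restrict the isotropy representation of the symmetric base $B$ to $\mathsf{H}$; for instance, for $\mathsf{SO}_{m+2n}$ it is $\R^m\otimes\R^{2n}$ restricted to $\mathsf{SO}_m\times\mathsf{U}_n$, which is irreducible of real type. In the exceptional rows this uses the branching of the half-spin, respectively $\mathsf{E}_6$/$\mathsf{E}_7$, modules. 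Once this holds, every invariant metric is, up to homothety, $Q|_{\mathfrak{n}}+\lambda^{-1}$-scaled on $\mathfrak{p}$, i.e.\ a canonical variation $\langle\cdot,\cdot\rangle_\lambda$. Where $\mathfrak{n}$ splits (for example in the $\mathsf{SU}$ row, where the $\mathsf{U}_1$ factor may act), I will have to check by hand that each extra parameter kills the transvections; I expect this to be the delicate point of this step.

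\textbf{Step 2 (existence at $\lambda=2$).} By Proposition~\ref{prop:verticaltrans}, every infinitesimal transvection of the symmetric fiber $F$ extends to an infinitesimal transvection of $(M,\langle\cdot,\cdot\rangle_2)$. Hence $\mathfrak{s}_o\supseteq\mathfrak{p}$ and $i_{\mathfrak{s}}\ge\dim F$. Because $\mathfrak{s}_o$ is $\mathsf{H}$--invariant and $\mathfrak{n}$ is irreducible, $\mathfrak{s}_o$ is either $\mathfrak{p}$ or $\mathfrak{m}$. The second option would make $M$ symmetric. That is impossible, since $\mathsf{G}/\mathsf{H}$ is not a symmetric space: $\mathfrak{h}$ is not the fixed algebra of an involution, as $[\mathfrak{n},\mathfrak{n}]\not\subset\mathfrak{h}$ because $[\mathfrak{n},\mathfrak{n}]$ contains $\mathfrak{p}$. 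Therefore $\mathfrak{s}_o=\mathfrak{p}$, and the leaf of symmetry is $F$.

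\textbf{Step 3 (triviality for $\lambda\neq2$).} If $\lambda\neq 2$, the same irreducibility argument shows that $\mathfrak{s}_o$ is $0$, $\mathfrak{p}$, or $\mathfrak{n}$. I use Lemma~\ref{lemma:hmax} and the structure results of~\cite{ORT}: the distribution of symmetry is $\mathsf{G}$--invariant and integrable with totally geodesic leaves, so $\mathfrak{s}_o\oplus\mathfrak{h}$ would have to be a subalgebra. Since $[\mathfrak{n},\mathfrak{n}]$ has a nonzero $\mathfrak{p}$--component, $\mathfrak{n}$ is excluded. The case $\mathfrak{s}_o=\mathfrak{p}$ would give a compact leaf equal to the fiber. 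Proposition~\ref{prop:verticaltransconversedimge2} together with Corollary~\ref{cor:indnotextension} then forces $\lambda=2$; here I must check the technical hypotheses of that corollary for each row (e.g.\ that $\dim F\ge2$ and the relevant centralizer condition). I expect this verification, together with the branching computations in Step 1 for the exceptional groups, to be the main obstacle, and I would handle them with the root-space descriptions of the symmetric pairs. Hence $i_{\mathfrak{s}}=0$ for $\lambda\neq 2$. Combined with Step 2 and the identification of $F$ in Table~\ref{table:theoremb}, this proves the statement.
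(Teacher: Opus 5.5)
Your Steps 2 and 3 depend on the claim that no $\langle\cdot,\cdot\rangle_\lambda$ is symmetric, and the argument you give for it does not work. From $[\mathfrak{n},\mathfrak{n}]_{\mathfrak{p}}\neq 0$ you only get that $(\mathsf{G},\mathsf{H})$ is not a symmetric pair. A $\mathsf{G}$--invariant metric on $\mathsf{G}/\mathsf{H}$ can still be symmetric, with an isometry group strictly larger than $\mathsf{G}$.

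The paper itself contains an example. On $\mathsf{Sp}_{n+1}/(\mathsf{Sp}_n\times\mathsf{U}_1)=\C\mathsf{P}^{2n+1}$, the canonical variation with $\lambda=2$ is the symmetric Fubini--Study metric (Theorem~\ref{th:cpindex}), even though $[\mathfrak{m}_2,\mathfrak{m}_2]$ has a nonzero $\mathfrak{m}_1$--component.

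It also happens inside the table. For $m=1$ the first row is $\mathsf{SO}_{2n+1}/\mathsf{U}_n\cong\mathsf{SO}_{2n+2}/\mathsf{U}_{n+1}$. Projecting $\mathfrak{so}_{2n+1}$ onto $\mathfrak{so}_{2n+2}\ominus\mathfrak{u}_{n+1}$ shows that the symmetric metric pulls back to $b$ on the vertical module and $\tfrac12 b$ on the horizontal module, so it is homothetic to $\langle\cdot,\cdot\rangle_2$. Your criterion would give a false conclusion there, and that case has to be excluded from the statement (one needs $m\ge 2$ in the first row).

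What is missing is global input. The paper supplies it as follows:
\begin{enumerate}
\item[(a)] For the six exceptional rows, Dickinson--Kerr show there is no $\mathsf{G}$--invariant Einstein metric. A symmetric invariant metric would be either irreducible, hence Einstein, or a product $M_1\times M_2$ aligned with $\mathfrak{m}_1,\mathfrak{m}_2$, which could be rescaled to an invariant Einstein metric.
\item[(b)] For the three classical families, non-integrability of the horizontal distribution rules out a product. An irreducible symmetric invariant metric is then ruled out by the classification of transitive actions on CROSSes (rank one) and by Kerr's Lemma~1.1 (higher rank).
\end{enumerate}

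The same gap reappears in Step 3. Your list $\{0,\mathfrak{p},\mathfrak{n}\}$ for $\lambda\neq 2$ silently drops $\mathfrak{s}_o=\mathfrak{m}$. The ``only if'' direction requires excluding symmetric metrics for every $\lambda$, and Corollary~\ref{cor:indnotextension} is only available for non-symmetric metrics.

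Once non-symmetry of all $\langle\cdot,\cdot\rangle_\lambda$ is established by such a classification argument, the rest of your plan agrees with the paper: Proposition~\ref{prop:verticaltrans} at $\lambda=2$, irreducibility to get $\mathfrak{s}_o\in\{\mathfrak{p},\mathfrak{m}\}$, the subalgebra criterion to exclude $\mathfrak{n}$, and the corollary for $\lambda\neq 2$. The branching worry in Step 1 is unnecessary, since every row is taken from Dickinson--Kerr's list of spaces with two inequivalent isotropy summands; in particular the horizontal module does not split in the $\mathsf{SU}$ row.
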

	
	\subsection*{Organization of the paper}
	In Section~\ref{sec:prelim}, we collect some standard facts on Riemannian homogeneous spaces (\S\ref{subsec:homspaces}), on the index of symmetry (\S\ref{subsec:indsym}), and we describe homogeneous metrics on spaces diffeomorphic to CROSSes (\S\ref{subsec:homCROSSes}). In Section~\ref{sec:homfibr}, we develop tools for studying the index of symmetry in the context of homogeneous fibrations. In \S\ref{subsec:stiefel} we study the index of symmetry of invariant metrics on $T_1\mathsf{S}^{n+1}$, which may be viewed both as an $\mathsf{S}^1$--fibration and as an $\mathsf{S}^n$--fibration, and already exhibits the phenomena described in \S\ref{subsec:s1fib} and \S\ref{subsec:transext}. In particular, in \S\ref{subsec:structure2isotr}, we establish a structural result for the distribution of symmetry of homogeneous spaces whose isotropy representation decomposes into two inequivalent irreducible submodules; see Theorem~\ref{th:2-isotropy_fibration}.  In \S\ref{subsec:s1fib}, we analyze the index of symmetry of homogeneous $\mathsf{S}^1$--fibrations and provide the proof of Theorem~B. In \S\ref{subsec:transext}, we study necessary and sufficient conditions for extending infinitesimal transvections from the fiber to the total space of a homogeneous fibration; see Propositions~\ref{prop:verticaltrans} and~\ref{prop:verticaltransconversedimge2}. These results are then used to prove Theorem~C.  Finally, in Section~\ref{sec:indexcross}, making use of several results obtained in Section~\ref{sec:homfibr}, we compute the index of symmetry of homogeneous spaces diffeomorphic to CROSSes, thereby completing the proof of Theorem~A.

	\subsection*{Acknowledgements:}
	The authors wish to thank David González-Álvaro and Miguel Domínguez-Vázquez for their valuable comments on
	an earlier draft of this manuscript.  We also thank Jason DeVito and Megan Kerr for helpful conversations and correspondence.

	\section{Preliminaries}
	\label{sec:prelim}
	
	In this section, we collect background results that will be used throughout the article regarding Killing vector fields, homogeneous spaces, the index of symmetry, and homogeneous spaces diffeomorphic to compact rank-one symmetric spaces.
	
	\subsection{Some general facts about Killing fields and homogeneous spaces} \label{subsec:homspaces}
	We introduce some basic facts and notation concerning Killing vector fields and Riemannian homogeneous spaces, see~\cite[Chapter~X]{KN2} for more details.
	
	Let $M$ be an arbitrary Riemannian manifold. We denote by $\mathcal{K}(M)$ the Lie algebra of Killing vector fields of $M$. A convenient formula to compute the Levi-Civita~\cite[Equation~3.2]{ORT} connection is
	\begin{equation}\label{eq:L-C}
		2\langle  \nabla _{X}Y , Z\rangle =
		\langle [X, Y], Z\rangle 
		+ \langle [X, Z] , Y\rangle
		+ \langle [Y, Z] , X\rangle, \quad \text{where  $X,Y,Z\in\mathcal{K}(M)$}.
	\end{equation}

	Let $X\in\mathcal{K}(M)$ and choose a fixed point $p\in M$. The pair $(X_p,  (\nabla X)_p)$, where $(\nabla X)_p$ denotes the skew-symmetric endomorphism of $T_p M$ given by $v\in T_p M\mapsto (\nabla_v X)_p$,   determines uniquely a Killing vector field defined on $M$. Moreover, if $X, Y\in\mathcal{K}(M)$, one can check that $Z=[X,Y]\in\mathcal{K}(M)$ has initial conditions at $p$ given by
	\begin{equation}
		\label{eq:bracketkilling}
		Z_{p}=[X,Y]_p \quad \text{and}\quad (\nabla Z)_{p}=R_{p}(X_{p},Y_{p}) -[(\nabla X)_p,(\nabla Y)_p],
	\end{equation}
	where $R$ denotes the curvature tensor of $M$. 
	
	We also have the following formula that relates the parallel transport, the Levi-Civita connection, and the flow of a Killing vector field; see~\cite[Equation (2.2.1)]{Di-Scala-Olmos-Vittone}. Let us denote by  $\gamma\colon(-\varepsilon,\varepsilon)\rightarrow M$ the integral curve of $X\in\mathcal{K}(M)$ starting at $p\in M$. Then, the following identity holds
	\begin{equation}
		\label{eq:magicolmos}
		\left((\mathcal{P}^{\gamma}_{0,t})^{-1}\circ d\phi_t \right)v=(e^{\nabla X})v \quad \text{for all $v\in T_p M$},
	\end{equation}
	where $\mathcal{P}^{\gamma}_{0,t}$ denotes the parallel transport along $\gamma$, and $\phi_t$ is the flow associated with $X$.
	
	Now let us assume that $M=\mathsf{G}/\mathsf{H}$ is a compact homogeneous space with base point $o\in M$. We consider the canonical connection $\nabla^c$ associated with the reductive decomposition $\mathfrak{g}=\mathfrak{h}\oplus\mathfrak{m}$, which is the unique $\mathsf{G}$--invariant affine connection on $M$ such that
	\begin{equation*}
		\label{eq:nablac}
		(\nabla^c_{X^*}Y^*)_o=(-[X,Y]_{\mathfrak{m}})^*_o, \quad\text{where $X,Y\in\mathfrak{m}$.}
	\end{equation*}
	We recall that the canonical connection $\nabla^c$ has the relevant property that every $\mathsf{G}$--invariant tensor on $M=\mathsf{G}/\mathsf{H}$ is $\nabla^c$-parallel, see~\cite[Proposition~2.7]{KN2}.
	
	From now on, we consider the reductive decomposition  $\mathfrak{g} = \mathfrak{h} \oplus \mathfrak{m}$ induced by taking as $\mathfrak{m}$ the orthogonal complement of $\mathfrak{h}$ with respect to an $\Ad(\mathsf{G})$--invariant inner product $b$ on $\mathfrak{g}$. We identify $\mathfrak m \cong T_{o}(\mathsf{G}/\mathsf{H})$, via $X\in\mathfrak{m}\mapsto
	X^{*}_{o} = \left.\frac{d}{dt}\right|_{t=0} \Exp(tX)\cdot o$, where $X\in\mathfrak{m}$. Recall that the isotropy representation of $M$ can be identified with the adjoint representation of $\mathsf{H}$ on $\mathfrak m$. This induces a splitting $\mathfrak{m} = \mathfrak{m}_{0} \oplus \mathfrak{m}_{1} \oplus \ldots \oplus \mathfrak{m}_{r}$ where $\mathsf{H}$ acts trivially on $\mathfrak{m}_{0}$ and irreducibly on $\mathfrak{m}_{1},\ldots, \mathfrak{m}_{r}$. 
	If all $\mathfrak{m}_i$, where $i\in\{1,\dots,r\}$ are inequivalent $\mathsf{H}$-modules, then any $\mathsf{G}$--invariant metric on $\mathsf{G}/\mathsf{H}$ comes from an $\Ad(\mathsf{H})$--invariant inner product on $\mathfrak{m}$ of the form
	\begin{equation}\label{eq:Homogeneous_Metric}
		\langle X , Y \rangle = h(X_{\mathfrak{m}_0},Y_{\mathfrak{m}_0}) +\sum_{i=1}^{r}\alpha_{i}b(X_{\mathfrak{m}_i},Y_{\mathfrak{m}_i}),
	\end{equation}
	where $(\cdot)_{\mathfrak{m}_i}$ denotes the projection to $\mathfrak{m}_i$, $h$ is an arbitrary positive definite inner product on $\mathfrak{m}_{0}$, and $\alpha_{i}>0$ a constant. In the case where there exist two equivalent irreducible $\mathsf{H}$-submodules $\mathfrak{m}_i$ and $\mathfrak{m}_j$, the decomposition $\mathfrak{m}_i \oplus \mathfrak{m}_j$ is not unique, and we need to introduce additional parameters to account for the possibility that the inner product restricted to $\mathfrak{m}_i \times \mathfrak{m}_j$ may not vanish. There are three possibilities for an irreducible $\mathsf{H}$-module such as $\mathfrak{m}_i$, since by Schur’s lemma the space of $\mathsf{H}$-intertwining operators $\mathrm{End}_{\mathsf{H}}(\mathfrak{m}_i)$ is isomorphic to either $\R$, $\C$, or $\H$. We say that the $\mathsf{H}$-module $\mathfrak{m}_i$ is of \textit{real}, \textit{complex}, or \textit{quaternionic} type, accordingly. In each case, we must introduce $1$, $2$, or $4$ extra parameters to completely determine the moduli space of $\mathsf{G}$--invariant metrics on $\mathsf{G}/\mathsf{H}$.
	
	Let us fix a $\mathsf{G}$--invariant metric $\langle \cdot, \cdot\rangle$ on $M=\mathsf{G}/\mathsf{H}$.   We say that $M=\mathsf{G}/\mathsf{H}$ is a \emph{normal homogeneous Riemannian manifold} if $\langle \cdot, \cdot \rangle$ coincides with the restriction to $\mathfrak{m}\times\mathfrak{m}$ of an $\Ad(\mathsf{G})$--invariant inner product $b$ of $\mathfrak{g}$, where $\mathfrak{m}$ is chosen to be the orthogonal complement of $\mathfrak{h}$ in $\mathfrak{g}$ with respect to $b$. On the other hand, we say that a homogeneous Riemannian manifold $ M = \mathsf{G}/\mathsf{H} $ equipped with a $\mathsf{G}$--invariant metric $\langle \cdot , \cdot \rangle$ is \emph{naturally reductive} if the tensor $U\colon\mathfrak{m}\times\mathfrak{m}\rightarrow\mathfrak{m}$ defined by 
	\begin{equation}\label{eq:U-tensor}
		2 \langle U(X, Y),Z \rangle = \langle [Z,X]_{\mathfrak{m}},Y\rangle + \langle X, [Z,Y]_{\mathfrak{m}}\rangle, \quad \text{for } X,Y,Z\in\mathfrak{m,}\end{equation}
	vanishes identically. In particular, normal homogeneous Riemannian manifolds are naturally reductive since the map $[X,\cdot]\colon\mathfrak{g}\rightarrow\mathfrak{g}$ is an anti-self-adjoint endomorphism of $\mathfrak{g}$ with respect to $b$ for each $X\in\mathfrak{g}$.
	
	A formula for the Levi-Civita connection of Killing vector fields induced by elements of $\mathfrak{m}$ in a Riemannian homogeneous space is
	\begin{equation}
		\label{eq:LCconnectionkillingind}
		(\nabla_{X^*} Y^*)_o=\left(-\frac{1}{2}[X,Y]_{\mathfrak{m}} + U(X,Y) \right)^*_o, \quad \text{where $X,Y\in \mathfrak{m}$.}
	\end{equation}
	Thus, using that the Levi-Civita connection is torsion-free, we deduce
	\begin{equation}
		\label{eq:LCconnectionkillingindarb}
		(\nabla_{X^*} V)_o= (D_{X} \xi)_o^* + [X^*,V]_o \quad \text{for every $X\in\mathfrak{m}$ and every vector field $V$}, 
	\end{equation}
	where $\xi\in\mathfrak{m}$ satisfies that $\xi^*_o=V_o$; and $D:=\nabla-\nabla^c$ is the difference tensor, which can be computed by
	\begin{equation}
		\label{eq:dtensor}
		D_X Y=\frac{1}{2}[X,Y]_{\mathfrak{m}}+ U(X,Y), \quad\text{for every $X,Y\in\mathfrak{m}$.}  
	\end{equation}

	A nice class of homogeneous metrics can be constructed by considering homogeneous fibrations. Given compact Lie groups $\mathsf{H}<\s K<\mathsf{G}$, we can consider the smooth $\mathsf{G}$--equivariant map $\pi\colon \mathsf{G}/\mathsf{H}\rightarrow \mathsf{G}/\mathsf{K}$ given by $\pi(g \s H)=g \s K$.  This is a smooth submersion with fiber diffeomorphic to $\mathsf{K}/\mathsf{H}$, and it is called the \textit{homogeneous fibration} associated with the triple $\mathsf{H}<\s K<\mathsf{G}$.  The $\mathsf{G}$--equivariant map $\pi$ is a Riemannian submersion when  $\mathsf{G}/\mathsf{K}$ and $\mathsf{G}/\mathsf{H}$ are endowed with the normal homogeneous metrics induced by a fixed background $\Ad(\mathsf{G})$--invariant inner product $b$ defined on $\mathfrak{g}$. Let $\mathcal{V} $ be the (autoparallel) vertical distribution, and let $ \mathcal{H} = \mathcal{V}^\perp $ be the horizontal distribution. Since $\mathcal{V}$ and $\mathcal{H}$ are obviously $\mathsf{H}$--invariant, we can make the following identifications $\mathfrak{m}_1:=\mathfrak{h}^\perp\cap\mathfrak{k}\cong \mathcal{V}_{o}$ and $\mathfrak{m}_2:=\mathfrak{k}^\perp\cong \mathcal{H}_{o}$, where $\mathfrak{h}^\perp$ and $\mathfrak{k}^\perp$ denote the orthogonal complement in $\mathfrak{g}$ with respect to $b$. Then, for each $\lambda>0$, we can consider the $\mathsf{G}$--invariant metric on $\mathsf{G}/\mathsf{H}$ induced by the $\Ad(\mathsf{H})$--invariant inner product on $\mathfrak{m}$ given by
	\begin{equation}
		\label{eq:canvar}
		\langle X, Y\rangle_{\lambda}=\lambda\, b(X_{\mathfrak{m}_1},Y_{\mathfrak{m}_1}) + \, b(X_{\mathfrak{m}_2},Y_{\mathfrak{m}_2}),
	\end{equation}
	where $(\cdot)_{\mathfrak{m}_i}$ denotes the orthogonal projection to $\mathfrak{m}_i$ for each $i\in\{1,2\}$.
	
	The metric $\langle \cdot, \cdot\rangle_{\lambda}$ on $M=\mathsf{G}/\mathsf{H}$ is referred to as a \textit{canonical variation} of the normal homogeneous metric on $M=\mathsf{G}/\mathsf{H}$. Observe that when $\mathsf{G}/\mathsf{K}$ is equipped with the normal homogeneous metric induced by $b$, the map $\pi$ is a Riemannian submersion for each metric $\langle \cdot, \cdot\rangle_{\lambda}$ on $M=\mathsf{G}/\mathsf{H}$. Moreover, by Equation~\eqref{eq:LCconnectionkillingind}, when $M$ is equipped with a normal homogeneous metric, that is, when $M$ is endowed with the metric $\langle \cdot,\cdot\rangle_{\lambda}$ with $\lambda = 1$, the curves $\gamma_X(t) = \Exp(tX)\cdot o$, where $X \in \mathfrak{m}_1$, are geodesics of $M$ tangent to the fiber $\pi^{-1}(e \mathsf{K})$. In particular, the fiber $\pi^{-1}(e\mathsf{K})$ is a totally geodesic submanifold of $M$. Indeed, every fiber of $\pi$ is a totally geodesic submanifold of $M$, since it is congruent to $\pi^{-1}(e\mathsf{K})$ by the \s{G}--equivariance of $\pi$. Then, the following result follows by \cite[Lemma~3.2]{dearricott}.
	\begin{lemma}
		\label{lemma:tghomfib}
		Let $\mathsf{H}<\mathsf{K}<\mathsf{G}$ induce a homogeneous fibration as above. Then, the fibers $\mathsf{K}/\mathsf{H}$ are totally geodesic submanifolds of $M=\mathsf{G}/\mathsf{H}$ with respect to every canonical variation $\langle \cdot,\cdot\rangle_{\lambda}$ of a normal homogeneous metric on $M$.
	\end{lemma}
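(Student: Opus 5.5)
Since the fiber through $o$ is the orbit $\mathsf{K}\cdot o$ and, by $\mathsf{G}$--equivariance of $\pi$, every other fiber $g\cdot\pi^{-1}(e\mathsf{K})$ is its image under an isometry of $\langle\cdot,\cdot\rangle_\lambda$, the plan is to prove that $F_o=\pi^{-1}(e\mathsf{K})$ is totally geodesic. Because $F_o$ is a $\mathsf{K}$--orbit and $\mathsf{K}$ acts by isometries, its second fundamental form is $\mathsf{K}$--equivariant. It therefore suffices to show that the second fundamental form vanishes at the single point $o$, i.e. that $(\nabla_{X^*}Y^*)_o$ lies in $\mathfrak{m}_1$ for all $X,Y\in\mathfrak{m}_1$. (Here $X^*,Y^*$ are Killing fields induced by elements of $\mathfrak{k}$, hence tangent to $F_o$ everywhere.)

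For this I would use formula~\eqref{eq:LCconnectionkillingind}:
\[
(\nabla_{X^*}Y^*)_o=\Bigl(-\tfrac12[X,Y]_{\mathfrak m}+U(X,Y)\Bigr)^*_o .
\]
The first term is fine: $[X,Y]\in\mathfrak{k}$, and $\mathfrak{k}=\mathfrak{h}\oplus\mathfrak{m}_1$, so $[X,Y]_{\mathfrak m}\in\mathfrak{m}_1$.

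It remains to show that $\langle U(X,Y),Z\rangle_\lambda=0$ for $Z\in\mathfrak{m}_2$. By~\eqref{eq:U-tensor},
\[
2\langle U(X,Y),Z\rangle_\lambda=\langle [Z,X]_{\mathfrak m},Y\rangle_\lambda+\langle X,[Z,Y]_{\mathfrak m}\rangle_\lambda .
\]
Since $X,Y\in\mathfrak{m}_1$, only the $\mathfrak{m}_1$-components of the brackets contribute, so the right-hand side equals
\[
\lambda\bigl(b([Z,X],Y)+b(X,[Z,Y])\bigr).
\]
This vanishes by $\Ad(\mathsf{G})$--invariance of $b$. Hence $U(X,Y)$ has no $\mathfrak{m}_2$-component, and $(\nabla_{X^*}Y^*)_o\in\mathfrak{m}_1\cong T_oF_o$.

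As a sanity check, one can reverse the roles: $\langle U(X,Y),W\rangle_\lambda$ with $W\in\mathfrak{m}_1$ need not vanish unless $\lambda=1$, but this is irrelevant, since only the normal component matters.

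The only delicate point is keeping track of which projections survive when $\langle\cdot,\cdot\rangle_\lambda$ is not $b$. The computation itself is a one-line invariance argument, so I do not expect a real obstacle. Alternatively, one could simply cite \cite[Lemma~3.2]{dearricott} as the paper indicates.
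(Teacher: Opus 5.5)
Your proof is correct, but it follows a different route from the paper. The paper only computes at the normal homogeneous metric $\lambda=1$. There $U\equiv 0$, so by \eqref{eq:LCconnectionkillingind} the curves $\Exp(tX)\cdot o$ with $X\in\mathfrak{m}_1$ are geodesics tangent to the fiber, and the fiber is totally geodesic. The passage to arbitrary $\lambda$ is then delegated to \cite[Lemma~3.2]{dearricott}. That is the general fact that rescaling the vertical metric of a Riemannian submersion with totally geodesic fibers keeps the fibers totally geodesic: O'Neill's $T$-tensor is simply multiplied by $\lambda$.

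You instead treat all $\lambda$ at once by a direct Lie-algebraic computation. $\mathsf{K}$-equivariance of the second fundamental form reduces everything to the point $o$. The bracket term lies in $\mathfrak{m}_1$ because $\mathfrak{k}=\mathfrak{h}\oplus\mathfrak{m}_1$. The normal part of $U(X,Y)$ is $\tfrac{\lambda}{2}\bigl(b([Z,X],Y)+b(X,[Z,Y])\bigr)=0$. Your version is self-contained and needs neither the $\lambda=1$ case nor submersion theory. The paper's version is shorter given the citation, and it applies to any Riemannian submersion with totally geodesic fibers, homogeneous or not. Both rest on the same Koszul-formula mechanism. For vertical $U,V$ and basic $X$, every term in the Koszul formula for $\langle\nabla_UV,X\rangle_\lambda$ is a $\langle\cdot,\cdot\rangle_\lambda$-product of vertical vectors. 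Hence the normal component of $\nabla_UV$ is $\lambda$ times its value at $\lambda=1$. In your computation this shows up as the overall factor $\lambda$ in front of an expression that vanishes.

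Your closing ``sanity check'' is false, though. For $W\in\mathfrak{m}_1$ one also has $[W,X]_{\mathfrak m}\in\mathfrak{m}_1$, so the same computation gives $2\langle U(X,Y),W\rangle_\lambda=\lambda\bigl(b([W,X],Y)+b(X,[W,Y])\bigr)=0$. Thus $U$ vanishes identically on $\mathfrak{m}_1\times\mathfrak{m}_1$ for every $\lambda$. This reflects that the induced metric on the fiber is just $\lambda$ times the normal homogeneous metric of $\mathsf{K}/\mathsf{H}$. The $\lambda$-dependence sits in the mixed terms $U(X,Z)$ with $X\in\mathfrak{m}_1$, $Z\in\mathfrak{m}_2$, whose $\mathfrak{m}_2$-part is proportional to $\lambda-1$. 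This does not affect your proof, since, as you note, only the normal component matters.
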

	
	\subsection{The index of symmetry}
	\label{subsec:indsym}
	We recall the notion of the index of symmetry and summarize some of its properties.
	
	A Killing vector field $X \in \mathcal{K}(M)$ is said to be an \emph{infinitesimal transvection} at a point $p \in M$ if its covariant derivative vanishes at $p$, that is, $(\nabla X)_p = 0$. For each point $p\in M$, one can consider all infinitesimal transvections at $p$. The values of these vector fields at $ p $ form a subspace of the tangent space $ T_p M $, which is called the \emph{distribution of symmetry} at $ p $.  The \emph{index of symmetry} of a Riemannian manifold $ M $ is defined as the greatest integer $ k $ such that, at every point $ p \in M $, there exist at least $ k $ linearly independent vectors $ v_1, \ldots, v_k \in T_p M $ and corresponding Killing vector fields $ X_1, \ldots, X_k $ on $ M $ satisfying $ X_i(p) = v_i $ and $ (\nabla X_i)_p = 0 $ for all $ i $. Equivalently, the index of symmetry can be expressed as
	\[
	i_{\mathfrak{s}}(M) := \min_{p \in M} \dim(\mathfrak{s}_p),
	\]
	where $ \mathfrak{s}_p $ denotes the distribution of symmetry at the point $ p $. The index of symmetry of a complete Riemannian manifold $ M $ is maximal, that is, $ i_{\mathfrak{s}}(M) = \dim(M) $, if and only if $ M $ is symmetric, see e.g.~\cite[Section~2]{BOR}. Therefore, the index of symmetry measures, in some way, how far a given Riemannian manifold is from being a symmetric space.
	
	The index of symmetry has been studied primarily for compact normal homogeneous and naturally reductive homogeneous spaces. For instance, in \cite{ORT} it is shown that if $M = \mathsf{G}/\mathsf{H}$ with $\mathsf{H}=\mathsf{G}_{o}$ is a simply connected, compact, normal homogeneous space that is irreducible and non-symmetric, then the distribution of symmetry coincides with the subspace of $\mathcal{K}(M)$ consisting of Killing fields whose infinitesimal generators are fixed by the isotropy. That is,
	\begin{equation}
		\label{eq:sym_isotropy}
		\mathfrak{s}_{o} = \{ X^{*}_{o} \colon \Ad(h)X = X \ \text{for all} \ h \in \mathsf{H} \}.
	\end{equation}
	The same result holds for naturally reductive homogeneous spaces $ M = \mathsf{G}/\mathsf{H} $, but under the additional assumption that the Lie group $ \s G $ has Lie algebra $ \mathfrak{g} = [\mathfrak{m}, \mathfrak{m}] + \mathfrak{m}$, see \cite[Theorem B]{ORT}. Moreover, consider an arbitrary Riemannian homogeneous space $M =\mathsf{G}/\mathsf{H}$, where $o = e\mathsf{H}$ and $\mathsf{H}=\mathsf{G}_{o}$ is the isotropy group. Then, the distribution of symmetry is a smooth distribution whose integral leaves are totally geodesic and globally symmetric (see~\cite[Section~3]{ORT}). We denote by $L(q)$ the integral leaf of the distribution of symmetry passing through $q \in M$, which we shall call the \emph{leaf of symmetry} at $q$. Let us focus on the origin $o$ and let $\mathsf{K}$ be the global stabilizer of the leaf of symmetry $L(o)$, that is,
	\[
	\mathsf{K}=\{g \in \mathsf{G} : g \cdot L(o) = L(o)\}. 
	\]
	Then, its identity component $\mathsf{K}^{0}$ is a connected Lie group, and $\mathsf{H} \cap \mathsf{K}^{0}$ is a closed Lie subgroup of $\mathsf{K}^{0}$. Thus, the quotient $\mathsf{K}^{0}/(\mathsf{H} \cap \mathsf{K}^{0})$ is a homogeneous presentation of the leaf of symmetry~$L(o)$. 
	Moreover, if the distribution of symmetry is non-trivial, then $\mathsf{K}^{0}$ is a connected Lie (possibly not closed) subgroup  which must properly contain $\mathsf{H} \cap \mathsf{K}^{0}$. This implies that if the Riemannian metric on $M$ is non-symmetric and $\mathsf{H}$ is a maximal connected Lie subgroup of $\mathsf{G}$, then the distribution of symmetry is trivial. Note that, since $\mathsf{H}$ is connected, it is entirely contained in $\mathsf{K}^{0}$, and thus $\mathsf{H} \cap \mathsf{K}^{0} = \mathsf{H}$, yielding the following result.
	
	\begin{lemma}
		\label{lemma:hmax}
		Let $M = \mathsf{G}/\mathsf{H}$ be a homogeneous space where $\mathsf{H}<\mathsf{G}$ is a maximal connected Lie subgroup. Then, we have that $i_{\mathfrak{s}}(M)=0$ for every $\mathsf{G}$--invariant non-symmetric metric on $M$. 
	\end{lemma}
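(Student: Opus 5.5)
We argue by contraposition, following the discussion preceding the statement. Fix a $\mathsf{G}$--invariant metric on $M=\mathsf{G}/\mathsf{H}$ that is not symmetric, and suppose $i_{\mathfrak{s}}(M)>0$. By homogeneity, $\dim\mathfrak{s}_q$ is independent of $q$, so the leaf of symmetry $L(o)$ through $o=e\mathsf{H}$ has positive dimension. Since the metric is non-symmetric, $\mathfrak{s}_o\neq T_oM$, so $L(o)$ is a proper totally geodesic submanifold, by the results of~\cite[Section~3]{ORT} recalled above.

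Let $\mathsf{K}$ be the stabilizer of $L(o)$ and $\mathsf{K}^0$ its identity component. The first step is to show that $\mathsf{H}\subset\mathsf{K}^0$. The distribution of symmetry is $\mathsf{G}$--invariant: an isometry maps infinitesimal transvections at $p$ to infinitesimal transvections at its image. Hence every $h\in\mathsf{H}$ maps $L(o)$ to a leaf through $h\cdot o=o$, and this leaf must be $L(o)$ itself. So $\mathsf{H}\subset\mathsf{K}$, and since $\mathsf{H}$ is connected, $\mathsf{H}\subset\mathsf{K}^0$.

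Next I show the two strict inclusions $\mathsf{H}\subsetneq\mathsf{K}^0\subsetneq\mathsf{G}$. The orbit $\mathsf{K}^0\cdot o$ is an open subset of the connected leaf $L(o)$. Indeed, for $X$ an infinitesimal transvection at $q\in L(o)$, its flow preserves the distribution of symmetry and keeps $q$ on the same leaf, so it preserves $L(o)$. Such $X$ span $T_qL(o)$, so the $\mathsf{K}^0$-orbits in $L(o)$ are open, and by connectedness $\mathsf{K}^0$ acts transitively on $L(o)$. Because $\dim L(o)>0$, the group $\mathsf{K}^0$ does not fix $o$, so $\mathsf{K}^0\neq\mathsf{H}$. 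Because $L(o)\neq M$, we have $\mathsf{K}^0\neq\mathsf{G}$; here we use that $\mathsf{G}$ is connected, or else we replace $\mathsf{G}$ by its identity component, which is still transitive since $M$ is connected and $\mathsf{H}$ is connected.

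Thus $\mathsf{K}^0$ is a connected Lie subgroup, possibly not closed, with $\mathsf{H}\subsetneq\mathsf{K}^0\subsetneq\mathsf{G}$. This contradicts the maximality of $\mathsf{H}$ among connected Lie subgroups, so $i_{\mathfrak{s}}(M)=0$. The main obstacle is the transitivity of $\mathsf{K}^0$ on $L(o)$, which depends on the flows of transvections preserving the leaves. If needed, I would instead argue at the Lie algebra level: the Killing fields tangent to $L(o)$ along $L(o)$ form a subalgebra $\mathfrak{k}$ with $\mathfrak{h}\subsetneq\mathfrak{k}\subsetneq\mathfrak{g}$, and the corresponding connected subgroup gives the same contradiction.
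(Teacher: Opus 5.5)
Your outline is the paper's: take the stabilizer $\mathsf{K}$ of $L(o)$, get $\mathsf{H}\subset\mathsf{K}^0$ from connectedness, and show $\mathsf{H}\subsetneq\mathsf{K}^0\subsetneq\mathsf{G}$. The paper simply asserts that $\mathsf{K}^0$ acts transitively on $L(o)$, so you are right to try to justify it. However, your justification that $\mathsf{K}^0\neq\mathsf{H}$ does not go through as written.

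The infinitesimal transvections at $q$ are Killing fields of $M$, i.e.\ elements of the full algebra $\mathcal{K}(M)$. The metric is only assumed $\mathsf{G}$--invariant, so $\mathrm{Isom}^0(M)$ may be strictly larger than $\mathsf{G}$. The flows of these transvections are isometries preserving $L(o)$, but they need not lie in $\mathsf{G}$, hence not in $\mathsf{K}^0$. So nothing follows about the $\mathsf{K}^0$-orbits. This really happens in the paper: in Theorem~\ref{th:indstiefel} with $\mu_1=\mu_2$ and $\lambda\neq 2\mu_1$, the transvection at $o$ is $E^*_{1,2}+c_2V$ with $c_2\neq 0$, and $V$ is not induced by $\mathfrak{so}_{n+2}$. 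Your Lie-algebra fallback has the same hole: you never exhibit an element of $\mathfrak{g}\setminus\mathfrak{h}$ whose fundamental field is tangent to $L(o)$.

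The repair is to work with elements of $\mathfrak{g}$ directly. Since $X\mapsto X^*_o$ maps $\mathfrak{g}$ onto $T_oM$ with kernel $\mathfrak{h}$, the subspace $\mathfrak{k}:=\{X\in\mathfrak{g}: X^*_o\in\mathfrak{s}_o\}$ has dimension $\dim\mathfrak{h}+\dim\mathfrak{s}_o$. Hence $0<\dim\mathfrak{s}_o<\dim M$ gives $\mathfrak{h}\subsetneq\mathfrak{k}\subsetneq\mathfrak{g}$.

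For $X\in\mathfrak{k}$, the curve $c(t)=\Exp(tX)\cdot o$ satisfies $\dot c(t)=d(\Exp(tX))_o X^*_o\in\mathfrak{s}_{c(t)}$, by $\mathsf{G}$--invariance of $\mathfrak{s}$. So $c$ stays in $L(o)$, and $\Exp(tX)\in\mathsf{K}^0$; this already gives $\mathsf{K}^0\neq\mathsf{H}$.

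A cleaner route also avoids asking whether the possibly non-closed $\mathsf{K}^0$ is a Lie subgroup. Pull $\mathfrak{s}$ back along $g\mapsto g\cdot o$. This gives a left-invariant integrable distribution on $\mathsf{G}$ whose value at $e$ is $\mathfrak{k}$. Frobenius applied to left-invariant fields then gives $[\mathfrak{k},\mathfrak{k}]\subset\mathfrak{k}$. The connected subgroup with Lie algebra $\mathfrak{k}$ contains the connected group $\mathsf{H}$ properly and is proper in $\mathsf{G}$, contradicting maximality.
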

	
	\begin{remark}
		\label{rem:closedleaf}
		Observe that if the leaf of symmetry is closed, then $\mathsf{K}$ is a closed Lie subgroup of~$\mathsf{G}$. Therefore, if $\mathsf{G}$ is a compact Lie group, since $\mathsf{H}<\mathsf{K}$ (because $\mathsf{H}\cdot L(o)=L(o)$ by~\cite[p.~615]{ORT}), we obtain a homogeneous fibration induced by the subgroups $\mathsf{H}<\mathsf{K}<\mathsf{G}$. 
		However, it is an open question whether there are examples of homogeneous spaces with non-closed leaves of symmetry, even in the compact setting. In Theorem~\ref{th:2-isotropy_fibration}, we actually prove that the leaves of symmetry are indeed closed under certain additional hypotheses (namely, being a compact homogeneous space with two inequivalent irreducible isotropy modules).
	\end{remark}

	\subsection{The homogeneous spaces diffeomorphic to CROSSes}
	\label{subsec:homCROSSes}
	The classification of transitive actions of connected compact Lie groups on spheres is summarized in the table below. This was obtained by Borel, Montgomery, and Samelson, see~\cite{borel,MontgomerySamelson}.
	\begin{table}[h!]
		\centering
		\begin{tabular}{|c|c|c|c|c|}
			\hline
			& $\mathsf{G}$ & $\mathsf{H}$ & $\dim \mathsf{G}/\mathsf{H}$ & Isotropy representation  \\
			\hline
			(i) & $\mathsf{SO}_{n+1}$ & $\mathsf{SO}_n$ & $n$ & Irreducible \\
			(ii) & $\mathsf{SU}_{n+1}$ &  $\mathsf{SU}_n$ & $2n+1$ & $\mathfrak{m} = \mathfrak{m}_0 \oplus \mathfrak{m}_1$  \\
			(iii) & $\mathsf{U}_{n+1}$ & $\mathsf{U}_n$ & $2n+1$ & $\mathfrak{m} = \mathfrak{m}_0 \oplus \mathfrak{m}_1$  \\
			(iv) & $\mathsf{Sp}_{n+1}$ & $\mathsf{Sp}_n$ & $4n+3$ & $\mathfrak{m} = \mathfrak{m}_0 \oplus \mathfrak{m}_1$  \\
			(v) & $\mathsf{Sp}_{n+1}\mathsf{Sp}_1$ & $\mathsf{Sp}_n\mathsf{Sp}_1$ & $4n+3$ & $\mathfrak{m} = \mathfrak{m}_1 \oplus \mathfrak{m}_2$  \\
			(vi) & $\mathsf{Sp}_{n+1}\mathsf{U}_1$ & $\mathsf{Sp}_n\mathsf{U}_1$ & $4n+3$ & $\mathfrak{m} = \mathfrak{m}_0 \oplus \mathfrak{m}_1 \oplus \mathfrak{m}_2$   \\
			(vii) & $\mathsf{Spin}_9$ & $\mathsf{Spin}_7$ & $15$ & $\mathfrak{m} = \mathfrak{m}_1 \oplus \mathfrak{m}_2$   \\
			(viii) & $\mathsf{Spin}_7$ & $\mathsf{G}_2$ & $7$ & Irreducible \\
			(ix) & $\mathsf{G}_2$ & $\mathsf{SU}_3$ & $6$ & Irreducible  \\
			\hline
		\end{tabular}\\[2ex]
		\caption{List of connected compact Lie groups $\mathsf{G}$ acting effectively and transitively on spheres up to local isomorphism, together with the decomposition of their isotropy representations into irreducible submodules. The submodules $\mathfrak{m}_0$  are trivial, that is, they are fixed by the $\mathsf{H}$--action. Moreover, in this list we denote by $\mathsf{Sp}_{n+1}\mathsf{Sp}_1$ and $\mathsf{Sp}_{n+1}\mathsf{U}_1$    the quotients of $\mathsf{Sp}_{n+1}\times \mathsf{Sp}_1$ and $\mathsf{Sp}_{n+1}\times \mathsf{U}_1$ by $\{\pm(\mathrm{Id},1) \}\cong \Z_2$, respectively.  }
		\label{table:homspheres}
	\end{table}
	
	By Schur's lemma, every sphere presented as a homogeneous space $\mathsf{G}/\mathsf{H}$ with irreducible isotropy admits a unique $\mathsf{G}$--invariant metric up to scaling, the round one. The cases  \textup{(ii)} and \textup{(iii)} yield the same family of $1$-parametric $\mathsf{G}$--invariant metrics up to scaling. Furthermore, we have the following inclusion relations for these families of $\mathsf{G}$--invariant metrics (see~\cite[\S 1]{ziller1982}):
	\[\textup{(v)}\subset \textup{(vi)}\subset \textup{(iv)}. \]
	
	\begin{remark}\label{Remark::RHn}
		The only non-simply connected homogeneous spaces diffeomorphic to a CROSS are the real projective spaces $\mathbb{R}\mathrm{P}^n \cong \mathsf{S}^n/\mathbb{Z}_2$. Clearly, any homogeneous metric on $\mathsf{S}^n$ descends to $\mathbb{R}\mathrm{P}^n$, since the action of its isometry group commutes with $\mathbb{Z}_2 \cong \{\pm \mathrm{Id}\}$. Conversely, any homogeneous metric on $\mathbb{R}\mathrm{P}^n$ pulls back to a homogeneous metric on its universal cover $\mathsf{S}^n$; indeed, the fundamental Killing fields lift globally and generate a transitive isometry group with the exact same Lie algebra. Consequently, the infinitesimal transvections at a point in $\mathsf{S}^n$ correspond bijectively to the infinitesimal transvections at its projection in $\mathbb{R}\mathrm{P}^n$. Therefore, the study of the index of symmetry for homogeneous spaces diffeomorphic to real projective spaces reduces identically to the case of spheres.
	\end{remark}

	\subsubsection{Hopf-Berger spheres} 
	Now we consider the $\mathsf{G}$--invariant metrics of spheres corresponding to the families \textup{(iii)}, \textup{(v)}, and \textup{(vii)}. In order to construct them, we take the total spaces of one of the Hopf fibrations, namely \[\mathsf{S}^1\rightarrow \mathsf{S}^{2n+1}\rightarrow \C \s P^n,\qquad\mathsf{S}^3\rightarrow \mathsf{S}^{4n+3}\rightarrow \H\s  P^n,\qquad \mathsf{S}^7\rightarrow \mathsf{S}^{15}\rightarrow \mathbb{O} \mathsf{P}^1  \]
	and we endow them with the Riemannian metric obtained from rescaling  the  metric tensor of the unit round sphere  by a factor $\tau>0$ in the vertical directions.  We denote such Riemannian homogeneous spaces by  $\mathsf{S}_{\C,\tau}^{2n+1}$, $\mathsf{S}_{\H,\tau}^{4n+3}$, and $\mathsf{S}_{\mathbb{O},\tau}^{15}$, depending on whether the Hopf fibration under consideration is the complex, the quaternionic, or the octonionic one, respectively; see~\cite{ORV} for more details. The following lemma is well known, see, for instance,~\cite{ARV_thesis} for a proof.
	\begin{lemma}\label{lem:Isom_Hopf_Berger}
		Let $\mathsf{S}^{n}_{\mathbb{F},\tau}$, $\tau\neq1$, be a Hopf-Berger sphere. Then:
		\[
		\begin{aligned}
			\Isom(\mathsf{S}^{n}_{\mathbb{F},\tau}) =  \left\{ \begin{array}{ll}
				\Un_{k+1}\rtimes \mathbb{Z}_{2} \quad &\textrm{if $\mathbb{F}=\mathbb{C}$ and $n=2k+1$,} \\ \smallskip
				\Sp_{k+1}\Sp_{1} \quad &\textrm{if $\mathbb{F}=\mathbb{H}$ and $n=4k+3$,} \\ \smallskip
				\Spin_{9} \quad &\textrm{if $\mathbb{F}=\mathbb{O}$ and $n=15$.} \\
			\end{array} \right.
		\end{aligned}
		\]
	\end{lemma}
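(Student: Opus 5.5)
The strategy is to compute the full isometry group through its Lie algebra. First we pin down the identity component using the classification in Table~\ref{table:homspheres}; then we determine the component group by deciding which outer symmetries of $\mathsf{G}_0$ preserve the fibration.

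\textbf{Step 1: the identity component.} Let $\mathsf{G}_0=\Isom^0(\mathsf{S}^n_{\mathbb{F},\tau})$. It is a compact connected Lie group acting transitively and effectively on $\mathsf{S}^n$, so it appears in Table~\ref{table:homspheres}. It contains the obvious transitive group $\mathsf{U}_{k+1}$, $\mathsf{Sp}_{k+1}\mathsf{Sp}_1$ or $\mathsf{Spin}_9$, respectively. Since $\tau\neq1$ the metric is not round, so $\mathsf{G}_0\neq\mathsf{SO}_{n+1}$. We rule out $\mathsf{Spin}_7$ and $\mathsf{G}_2$ because their invariant metrics are round (irreducible isotropy) and their dimensions are too small. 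Using the inclusions of families $\textup{(v)}\subset\textup{(vi)}\subset\textup{(iv)}$ and the containments among the groups in the table, we then show that no strictly larger group in the list preserves the metric:
\begin{itemize}
\item \emph{Complex case.} The only groups in the table containing $\mathsf{U}_{k+1}$ and acting on $\mathsf{S}^{2k+1}$ are $\mathsf{U}_{k+1}$ and $\mathsf{SO}_{2k+2}$ (and possibly $\mathsf{Sp}\mathsf{U}_1$-type subgroups, which are smaller). So $\mathsf{G}_0=\mathsf{U}_{k+1}$.
\item \emph{Quaternionic case.} $\mathsf{Sp}_{k+1}\mathsf{Sp}_1$ is maximal among proper transitive groups, except inside $\mathsf{SO}_{4k+4}$. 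So $\mathsf{G}_0=\mathsf{Sp}_{k+1}\mathsf{Sp}_1$.
\item \emph{Octonionic case.} $\mathsf{Spin}_9$ is maximal in $\mathsf{SO}_{16}$ among transitive groups. So $\mathsf{G}_0=\mathsf{Spin}_9$.
\end{itemize}
A cleaner argument for maximality is that $\mathsf{G}_0$ must preserve the vertical distribution of the Hopf fibration when $\tau\neq1$. This distribution can be characterized metrically, for instance as an eigendistribution of the Ricci tensor, whose vertical and horizontal eigenvalues differ when $\tau\neq1$. Hence $\mathsf{G}_0$ lies in the stabilizer in $\mathsf{O}_{n+1}$ of the fibration, and that stabilizer is the listed group.

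\textbf{Step 2: the full group.} $\Isom(\mathsf{S}^n_{\mathbb{F},\tau})$ normalizes $\mathsf{G}_0$ and preserves the fibration. It is contained in $\mathsf{O}_{n+1}$: the metric is a deformation of the round one that preserves fibers, and any isometry maps fibers to fibers and so is an isometry of the base, which one checks extends. We therefore compute the normalizer of $\mathsf{G}_0$ in $\mathsf{O}_{n+1}$ and verify which elements preserve the Ricci eigendistributions.
\begin{itemize}
\item \emph{Complex case.} The normalizer of $\mathsf{U}_{k+1}$ in $\mathsf{O}_{2k+2}$ is $\mathsf{U}_{k+1}\rtimes\mathbb{Z}_2$, with $\mathbb{Z}_2$ given by complex conjugation. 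Conjugation preserves the Hopf circles and hence the metric, which gives the stated group.
\item \emph{Quaternionic and octonionic cases.} The normalizers are $\mathsf{Sp}_{k+1}\mathsf{Sp}_1$ and $\mathsf{Spin}_9$ themselves. For $\mathsf{Sp}$, an orientation-reversing normalizer element would have to induce an outer automorphism, and $\mathsf{Sp}_{k+1}\mathsf{Sp}_1$ has none realized in $\mathsf{O}_{4k+4}\setminus\mathsf{SO}_{4k+4}$. For $\mathsf{Spin}_9$, the representation $\Delta_9$ has no outer automorphism.
\end{itemize}

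\textbf{Main obstacle.} The delicate part is Step 2: proving that every isometry lies in $\mathsf{O}_{n+1}$ and correctly identifying the finite component group, especially that no orientation-reversing isometries exist in the quaternionic and octonionic cases. To handle this, I would first try to show that an orientation-reversing element normalizing $\mathsf{Sp}_{k+1}\mathsf{Sp}_1$ induces an inner automorphism on the simple factors. It therefore differs from an element of $\mathsf{G}_0$ by an element centralizing $\mathsf{G}_0$. Since $\mathbb{R}^{4k+4}=\mathbb{H}^{k+1}$ is irreducible of real type under $\mathsf{Sp}_{k+1}\mathsf{Sp}_1$, that centralizing element is $\pm\mathrm{Id}$, which preserves orientation. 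The same argument applies to $\mathsf{Spin}_9$ acting on $\mathbb{R}^{16}$.
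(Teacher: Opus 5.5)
Your Step~1 is essentially sound. Step~2, however, has a genuine gap. Everything there rests on the containment $\Isom(\mathsf{S}^n_{\mathbb{F},\tau})\subset\mathsf{O}_{n+1}$, because your Schur-lemma computation only classifies \emph{linear} isometries normalizing $\mathsf{G}_0$. Neither ingredient you offer for that containment works.

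\emph{Ricci does not single out the fibers.} By O'Neill's formulas, the Ricci eigenvalues on vertical and horizontal vectors are:
\begin{itemize}
\item $\tfrac{2}{\tau}+4k\tau$ and $4k+8-6\tau$ for $\mathsf{S}^{4k+3}_{\mathbb{H},\tau}$;
\item $\tfrac{6}{\tau}+8\tau$ and $28-14\tau$ for $\mathsf{S}^{15}_{\mathbb{O},\tau}$.
\end{itemize}
They coincide at Jensen's Einstein metric $\tau=\tfrac{1}{2k+3}$ and at the Bourguignon--Karcher Einstein metric $\tau=\tfrac{3}{11}$. At these parameters the Ricci tensor is a multiple of the metric, so it cannot characterize the vertical distribution. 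This also breaks your ``cleaner'' argument in Step~1.

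\emph{Base isometries need not extend.} The claim ``an isometry of the base, which one checks extends'' is false. For $\mathbb{H}\mathsf{P}^1\cong\mathsf{S}^4$ and $\mathbb{O}\mathsf{P}^1\cong\mathsf{S}^8$, the base isometry groups $\mathsf{O}_5$ and $\mathsf{O}_9$ are disconnected, while the lemma says the groups upstairs are connected. So orientation-reversing base isometries do not lift. Ruling out that an isometry of $M$ induces one is exactly what has to be proved, so this route is circular. You also never treat isometries covering the identity of the base.

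The missing idea is to run your normalizer argument intrinsically; this needs neither linearity nor fiber preservation.
\begin{itemize}
\item Any isometry $f$ normalizes $\mathsf{G}_0$.
\item For $\mathsf{Sp}_{k+1}\mathsf{Sp}_1$ with $k\geq 1$, and for $\mathsf{Spin}_9$, every automorphism is inner. Indeed $C_{k+1}$, $A_1$, $B_4$ have no diagram symmetries, and $\mathfrak{sp}_{k+1}\not\cong\mathfrak{sp}_1$. This is where $k\geq 1$ enters: for $k=0$ the metric is round and $q\mapsto\bar q$ swaps the factors.
\item Hence $f=h\circ c$ with $h\in\mathsf{G}_0$ and $c$ a diffeomorphism commuting with the transitive $\mathsf{G}_0$-action.
\item Such a $c$ is determined by $c(o)$, which must be fixed by the isotropy group $\mathsf{H}$. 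So $c$ is a right translation by an element of $N_{\mathsf{G}_0}(\mathsf{H})/\mathsf{H}$.
\item In both cases the $\mathsf{H}$-fixed points are $\pm o$, so $c=\pm\mathrm{Id}\in\mathsf{G}_0$ and $\Isom=\mathsf{G}_0$.
\end{itemize}

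The complex case goes the same way. $\mathrm{Out}(\mathsf{U}_{k+1})\cong\mathbb{Z}_2$ is realized by complex conjugation, which is an isometry. The diffeomorphisms commuting with $\mathsf{U}_{k+1}$ form $N_{\mathsf{U}_{k+1}}(\mathsf{U}_k)/\mathsf{U}_k\cong\mathsf{U}_1$, acting as the center. Together these give $\mathsf{U}_{k+1}\rtimes\mathbb{Z}_2$. Your fibration route also survives in this case, since $2k\tau\neq 2k+2-2\tau$ for $\tau\neq 1$ and all of $\Isom(\mathbb{C}\mathsf{P}^k)$ lifts.

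A minor slip in Step~1: $\mathsf{Spin}_7$ is not too small. Its dimension $21$ exceeds $\dim\mathsf{U}_4=16$ and $\dim\mathsf{Sp}_2\mathsf{Sp}_1=13$. It is excluded by the irreducible-isotropy argument alone.
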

	
	\begin{remark}
		\label{rem:normalhomtau}
		The $\mathsf{G}$--invariant metrics corresponding to the Hopf-Berger metrics on the spheres $\mathsf{S}^{2n+1}_{\C,\tau}=\mathsf{SU}_{n+1}/\mathsf{SU}_n$, $\mathsf{S}^{4n+3}_{\H,\tau}=\mathsf{Sp}_{n+1}/\mathsf{Sp}_n$, and $\mathsf{S}^{15}_{\mathbb{O},\tau}=\mathsf{Spin}_{9}/\mathsf{Spin}_7$ correspond to the $\mathsf{H}$--invariant inner products $\langle\cdot,\cdot\rangle_{\F,\tau}$. Here, $\F\in\{\C,\H,\O\}$ and $b$ denotes a given $\Ad(\mathsf{G})$--invariant inner product on the corresponding Lie algebra $\mathfrak{g}$ as follows:
		\begin{equation}
			\begin{aligned}
				\langle X,Y\rangle_{\C,\tau}:=& \tfrac{2n}{n+1}\tau b(X_{\mathfrak{m}_0}, Y_{\mathfrak{m}_0}) + b(X_{\mathfrak{m}_1}, Y_{\mathfrak{m}_1}), & &\text{where $b(X,Y)=-\frac{1}{2}\tr(X Y)$},\\
				\langle X,Y\rangle_{\H,\tau}:=& 2\tau b(X_{\mathfrak{m}_0}, Y_{\mathfrak{m}_0}) + b(X_{\mathfrak{m}_1}, Y_{\mathfrak{m}_1}),&  &\text{where $b(X,Y)=-\frac{1}{2}\mathrm{Re}\tr_{\H}(X Y)$},\\
				\langle X,Y\rangle_{\mathbb{O},\tau}:=& 4\tau b(X_{\mathfrak{m}_0}, Y_{\mathfrak{m}_0}) + b(X_{\mathfrak{m}_1}, Y_{\mathfrak{m}_1}),&  &\text{where $b(X,Y)=-\frac{1}{8}\tr(X Y)$}.
			\end{aligned}
		\end{equation}
		Thus, Hopf-Berger metrics are canonical variations of a normal homogeneous metric and, more concretely, each normal homogeneous metric is realized for $\tau=\tfrac{n+1}{2n}$, $\tau=\tfrac{1}{2}$, and $\tau=\tfrac{1}{4}$ for each $\F\in\{\C,\H,\O\}$, respectively.
	\end{remark}
	
	\subsubsection{The  $\mathsf{Sp}_{n+1}$--invariant metrics on spheres} 
	\label{subsubsec:Spn-inv}
	Let us focus on the family of $\mathsf{Sp}_{n+1}$--invariant metrics on $\mathsf{S}^{4n+3}$ given by~\textup{(iv)}.  We define $\mathfrak{g}=\mathfrak{sp}_{n+1}$ and we consider the reductive decomposition $\mathfrak{g}=\mathfrak{h}\oplus\mathfrak{m}$, with $\mathfrak{m}:=\mathfrak{m}_0\oplus\mathfrak{m}_1$, where
	\begin{equation}
		\label{eq:reddecspn-inv}
		\mathfrak{h}=\left(
		\begin{array}{c|c}
			Z & 0 \\
			\hline
			0 & 0
		\end{array}
		\right), \quad \mathfrak{m}_0=\left(
		\begin{array}{c|c}
			0 & 0 \\
			\hline
			0 & \mathrm{Im}(x)
		\end{array}
		\right), \quad \mathfrak{m}_1=\left(
		\begin{array}{c|c}
			0 & v \\
			\hline
			-v^* & 0
		\end{array}
		\right),
	\end{equation}
	where $x\in\H$, $v\in\H^n$ and $Z\in\mathfrak{sp}_n$. We denote by $X_1$, $X_2$ and $X_3$ the vertical Killing vector fields induced by vectors of $\mathfrak{m}_0$, where $x=i$, $x=j$ and $x=k$; respectively.
	
	Then, we denote by $\mathsf{S}^{4n+3}_{\tau_1,\tau_2,\tau_3}$ the Riemannian manifold obtained by considering the total space $\mathsf{S}^{4n+3}$ of the quaternionic Hopf fibration $\mathsf{S}^3\rightarrow \mathsf{S}^{4n+3}\rightarrow \H\mathsf{P}^n$  where we rescaled its round unit metric in the vertical directions $X_1$, $X_2$ and $X_3$ by the positive parameters $\tau_1,\tau_2$ and $\tau_3$, respectively. From now on, we will assume, without loss of generality, that $\tau_{1}\geq \tau_{2}\geq \tau_{3}>0$.
	Then, the corresponding $\Ad(\mathsf{H})$--invariant inner product on $\mathfrak{m}$ is given by
	\begin{equation}
		\label{eq:spnmetric}
		\langle \cdot, \cdot \rangle_{\tau_1,\tau_2,\tau_3}= 2 \sum_{i=1}^{3}\tau_i b(\cdot,\cdot)_{\rvert \R X_i \times \R X_i}+ b(\cdot,\cdot)_{\rvert\mathfrak{m}_1\times\mathfrak{m}_1},
	\end{equation}
	where $b(X,Y)=-\tfrac{1}{2}\mathrm{Re} \tr_{\H}(XY)$, with $X,Y\in\mathfrak{sp}_{n+1}$ is  a certain multiple of the Killing form of $\mathfrak{g}=\mathfrak{sp}_{n+1}$.  Indeed, one can check that for $\tau_i=1$ for all $i\in\{1,2,3\}$, we get the unit round metric on $\mathsf{S}^{4n+3}$,  see~\cite{ziller1982} for a computation. Moreover, $\langle \cdot, \cdot \rangle_{\tau_1,\tau_2,\tau_3}$ exhaust all possible $\mathsf{Sp}_{n+1}$--invariant metrics on $\mathsf{S}^{4n+3}$ up to homothety, see~\cite{ziller1982}.

	By \cite[Theorem 2.2]{kollrosstams} and Table~\ref{table:homspheres}, we have the  diagram of inclusions depicted in~Figure~\ref{fig:spn-inclusions}, where each one of the inclusions is maximal. 	Then, we can deduce the following.
	\begin{lemma}\label{Lemma::Isometry_Group_Spheres} The  connected component containing the identity of the group of isometries of $\mathsf{S}^{4n+3}_{\tau_{1},\tau_{2},\tau_{3}}$ with $\tau_{1}\geq\tau_{2}\geq \tau_{3}>0$ is given by one of the following cases below: 
		\begin{enumerate}[{\rm (i)}]
			\item If $\tau_{1}=\tau_{2}=\tau_{3}=1$, then $\mathsf{S}^{4n+3}_{\tau_{1},\tau_{2},\tau_{3}}$ is the round sphere, and thus $\Isom^{0}(\mathsf{S}^{4n+3}_{\tau_{1},\tau_{2},\tau_{3}})=\SO_{4n+4}$.
			\item If $\tau_{1}=\tau_{2}=\tau_{3}\neq 1$, then $\mathsf{S}^{4n+3}_{\tau_{1},\tau_{2},\tau_{3}}=\mathsf{S}^{4n+3}_{\H,\tau}$, and thus $\Isom^{0}(\mathsf{S}^{4n+3}_{\tau_{1},\tau_{2},\tau_{3}})=\Sp_{n+1}\Sp_{1}$.
			\item  If $\tau_1=\tau_2=1$ and $\tau_3=\tau< 1$; or $\tau_1=\tau> 1$ and $\tau_2=\tau_3=1$, then $\mathsf{S}^{4n+3}_{\tau_{1},\tau_{2},\tau_{3}}=\mathsf{S}^{4n+3}_{\C,\tau}$, and thus, $\Isom^{0}(\mathsf{S}^{4n+3}_{\tau_{1},\tau_{2},\tau_{3}})=\Un_{2n+2}$.
			\item  If $1\neq \tau_1=\tau_2\neq\tau_3$ or $\tau_1\neq\tau_2=\tau_3\neq 1$, then $\Isom^{0}(\mathsf{S}^{4n+3}_{\tau_{1},\tau_{2},\tau_{3}})=\mathsf{Sp}_{n+1}\mathsf{U}_1$.
			\item If $\tau_1>\tau_2>\tau_3$, then $\Isom^{0}(\mathsf{S}^{4n+3}_{\tau_{1},\tau_{2},\tau_{3}})=\Sp_{n+1}$.
		\end{enumerate}
	\end{lemma}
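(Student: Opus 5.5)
The plan is to combine the classification of transitive actions on spheres (Table~\ref{table:homspheres}) with the maximal-inclusion diagram from \cite[Theorem~2.2]{kollrosstams}, summarized in Figure~\ref{fig:spn-inclusions}:
\[\mathsf{Sp}_{n+1}\subset \mathsf{Sp}_{n+1}\mathsf{U}_1\subset \mathsf{Sp}_{n+1}\mathsf{Sp}_1\subset \mathsf{SO}_{4n+4}, \qquad \mathsf{Sp}_{n+1}\mathsf{U}_1\subset\mathsf{U}_{2n+2}\subset \mathsf{SO}_{4n+4}.\]
Since $\mathsf{Sp}_{n+1}$ acts isometrically and transitively on $\mathsf{S}^{4n+3}_{\tau_1,\tau_2,\tau_3}$, the group $\Isom^0$ is a connected compact group containing $\mathsf{Sp}_{n+1}$ and acting transitively and effectively on $\mathsf{S}^{4n+3}$. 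By Table~\ref{table:homspheres} and the maximality of the inclusions, it must be one of the five groups in the diagram. The task is therefore to decide, for each choice of $(\tau_1,\tau_2,\tau_3)$, which of these groups preserves the metric.

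Case (i) is immediate. For the remaining cases I would use the invariant-metric descriptions.

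\textbf{$\mathsf{SO}_{4n+4}$.} Its only invariant metric is round, which corresponds to $\tau_1=\tau_2=\tau_3=1$.

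\textbf{$\mathsf{Sp}_{n+1}\mathsf{Sp}_1$.} Its isotropy $\mathsf{Sp}_n\mathsf{Sp}_1$ acts irreducibly on $\mathfrak{m}_0\cong\mathrm{Im}\H$ through $\mathsf{SO}_3$. Hence its invariant metrics are exactly those with $\tau_1=\tau_2=\tau_3$.

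\textbf{$\mathsf{U}_{2n+2}$.} Its invariant metrics are the complex Berger spheres. Here one identifies the complex Hopf fibre with one of the directions $X_i$, say $X_1$, after conjugation. The $\mathsf{U}_{2n+2}$-invariance then forces the metric on $\mathrm{span}\{X_2,X_3\}\oplus\mathfrak{m}_1$ to be round-proportional, which gives $\tau_2=\tau_3=1$. By the same reasoning with the special direction taken to be $X_3$, one gets the other case, $\tau_1=\tau_2=1$.

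\textbf{$\mathsf{Sp}_{n+1}\mathsf{U}_1$.} Here the extra $\mathsf{U}_1$ acts on $\mathfrak{m}_0$ by rotating a $2$-plane of $\mathrm{Im}\H$ while fixing one line (the family (vi)). Its invariant metrics are therefore exactly those with two equal $\tau$'s.

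With these facts in hand, the identification of $\Isom^0$ in each case is to take the largest group in the diagram that preserves the metric, which is what the case list encodes. In case (iii), the ordering $\tau_1\ge\tau_2\ge\tau_3$ forces the stated inequalities $\tau<1$ and $\tau>1$ respectively. Case (iv) is what remains after excluding (ii) and (iii) from "two equal". In case (v), no pair of $\tau$'s is equal, so no group strictly containing $\mathsf{Sp}_{n+1}$ preserves the metric.

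The main obstacle is the $\mathsf{U}_{2n+2}$ step. There I need to verify that the metric $\tau_2=\tau_3=1$, $\tau_1=\tau$ really coincides with $\mathsf{S}_{\C,\tau}$. I also need to show that no other $\mathsf{U}_{2n+2}$-conjugate produces further invariant metrics. I would handle this by noting that $\mathsf{U}_{2n+2}$-invariant metrics restrict to $\mathsf{Sp}_{n+1}\mathsf{U}_1$-invariant ones, so they lie in the two-equal family. The $\mathsf{U}_{2n+2}$-isotropy $\mathsf{U}_{2n+1}$ acts irreducibly on the orthogonal complement of the Hopf direction, and this forces the two equal parameters to equal the horizontal scale $1$.
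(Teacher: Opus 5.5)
Your proposal is correct and follows the paper's route. The paper simply invokes \cite[Theorem~2.2]{kollrosstams} and Table~\ref{table:homspheres} to obtain the maximal-inclusion diagram of Figure~\ref{fig:spn-inclusions}, then reads off which group preserves which metric using Ziller's description of the invariant families; you make that last step explicit through the isotropy representations. One small slip: the diagram has six groups, not five, so you must also exclude $\Isom^0=\mathsf{SU}_{2n+2}$. This is immediate because, as the paper notes after Table~\ref{table:homspheres}, $\mathsf{SU}_{2n+2}$- and $\mathsf{U}_{2n+2}$-invariant metrics coincide (the isotropy $\mathsf{SU}_{2n+1}$ acts on $\C^{2n+1}$ irreducibly of complex type), so any metric preserved by $\mathsf{SU}_{2n+2}$ is also preserved by the larger group $\mathsf{U}_{2n+2}$.
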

	\begin{figure}[h]
		\centering
		\begin{tikzcd}
			& \mathsf{SO}_{4n+4}   & \\
			\mathsf{U}_{2n+2} \arrow[ur] & & \mathsf{Sp}_{n+1}\mathsf{Sp}_1 \arrow[ul]\\
			\mathsf{SU}_{2n+2} \arrow[u] & & \mathsf{Sp}_{n+1}\mathsf{U}_1 \arrow[u] \arrow[ull] \\
			& \mathsf{Sp}_{n+1} \arrow[ul] \arrow[ur]   &
		\end{tikzcd}
		\caption{Connected subgroups acting effectively and transitively on $\mathsf{S}^{4n+3}$ containing $\mathsf{Sp}_{n+1}$.}
		\label{fig:spn-inclusions}
	\end{figure}
	
	\subsubsection{$\mathsf{Sp}_{n+1}\mathsf{U}_1$--invariant metrics on spheres}
	Now, we will focus on those metrics corresponding to case (vi) in Lemma~\ref{Lemma::Isometry_Group_Spheres}. To ensure consistency with the conventions established we maintain the ordering assumption $\tau_{1}\geq \tau_{2}\geq \tau_{3}>0$. Under this assumption, the condition defining case (vi) distinguishes between two subcases:
	\begin{itemize}
		\item[(A)] $\tau_{1}>\tau_{2}=\tau_{3}\neq 1$. In this scenario, the isometry group is extended by a factor $\Un_{1}$, acting via $e^{it}\cdot p=pe^{-it}$, where $t \in \R$ and $p\in\mathsf{S}^{4n+3}$.
		\item[(B)] $1\neq\tau_{1}=\tau_{2}>\tau_{3}$. Here, the isometry group is extended by a factor $\Un_{1}$, acting via $e^{kt}\cdot p=pe^{-kt}$, where $t \in \R$ and $p\in\mathsf{S}^{4n+3}$.
	\end{itemize}
	
	This distinction is geometrically significant: normal homogeneous metrics appear exclusively in Case (B), whereas Case (A) contains naturally reductive metrics but none of them is normal homogeneous. We will present an expression for the corresponding inner products on $\mathfrak{m}$. For each $q\in\mathrm{Im}(\H)$, we define the elements of $\mathfrak{gl}_{n+2}(\H)$ given by
	\begin{equation}
		\label{eq:defxyq}
		V(q) := \left(\begin{array}{c|c}
			q  & 0 \\ \hline
			0 & 0
		\end{array}\right), \quad  W(q) := \left(\begin{array}{c|c}
			0  & 0 \\ \hline
			0 & q
		\end{array}\right).
	\end{equation}
	We construct the Lie subalgebra $\mathfrak{g}(q):=\mathfrak{g}_1\oplus\mathfrak{g}_2(q)$ of $\mathfrak{gl}_{n+2}(\H)$, where $\mathfrak{g}_1$ is the subalgebra of $\mathfrak{gl}_{n+2}(\H)$ isomorphic to  $\mathfrak{sp}_{n+1}$ which centralizes $V(q)$, and $\mathfrak{g}_2(q)$ is the one-dimensional subspace spanned by $V(q)$. We consider the subalgebra $\mathfrak{h}(q)\subset \mathfrak{g}(q)$ defined by $\mathfrak{h}(q):=\mathfrak{h}_{1}\oplus \mathfrak{h}_{2}(q)$, where
	$\mathfrak{h}_1$ is the subalgebra of $\mathfrak{g}_1$ isomorphic to $\mathfrak{sp}_n$ which centralizes $W(q)$; and $\mathfrak{h}_2(q):=\spann\{V(q)+W(q)\}$.  Notice that $\mathfrak{g}_1$ and $\mathfrak{h}_1$ are independent of the choice of $q\in\mathrm{Im}(\H)$.
	
	Let us fix the background metric $b(X,Y)=-\tfrac{1}{2}\mathrm{Re} \tr_{\H}(XY)$ in $\mathfrak{gl}_{n+2}(\H)$. Then $b$ also induces an $\Ad(\mathsf{G}(q))$--invariant metric, where $\mathsf{G}(q)\cong\mathsf{Sp}_{n+1}\times\mathsf{U}_1$ is the connected subgroup with Lie subalgebra $\mathfrak{g}(q)$ in $\mathfrak{gl}_{n+2}(\H)$.
	We define the subspaces of $\mathfrak{m}(q):=\mathfrak{g}(q)\ominus\mathfrak{h}(q)$ given by   $\mathfrak{m}_{0}(q)=\spann\{V(q)-W(q)\}$, and $ \mathfrak{m}_{1}(q):=\spann\{W(q'): q'\in\mathrm{Im}(\H)\ominus\R q\}$. Now we fix the reductive decomposition
	\begin{equation}
		\label{eq:reddec2param}
		\mathfrak{g}(q)=\mathfrak{h}(q)\oplus \mathfrak{m}(q),
	\end{equation}
	where $\mathfrak{m}(q):=\mathfrak{m}_{0}(q)\oplus \mathfrak{m}_{1}(q) \oplus \mathfrak{m}_{2}$ and $\mathfrak{m}_2$ is the orthogonal complement of $\mathfrak{m}_0(q)\oplus\mathfrak{m}_1(q)$ inside $\mathfrak{m}(q)$ with respect to $b$. Observe that $\mathfrak{m}_2$ does not depend on $q\in \mathrm{Im}(\H)$. Then, the metrics (A) and (B) correspond to the following $\mathsf{H}$--invariant inner products:
	\begin{equation}
		\label{eq:spnu1metric}
		\begin{aligned}
			&\text{(A)} & & \langle \cdot , \cdot \rangle_{\tau_{1},\tau,\tau} = 4\tau_{1} b(\cdot,\cdot)_{\rvert \mathfrak{m}_0(i)\times\mathfrak{m}_0(i)} \hspace{0.1cm}+ 2\tau b(\cdot,\cdot)_{\rvert \mathfrak{m}_1(i)\times\mathfrak{m}_1(i)} \hspace{0.1cm}+ b(\cdot,\cdot)_{\rvert\mathfrak{m}_2\times\mathfrak{m}_2},\\[1ex]
			&\text{(B)} & & \langle \cdot , \cdot \rangle_{\tau,\tau,\tau_3} = 4\tau_3 b(\cdot,\cdot)_{{\rvert \mathfrak{m}_0(k)\times\mathfrak{m}_0(k)}} + 2\tau b(\cdot,\cdot)_{\rvert \mathfrak{m}_1(k)\times\mathfrak{m}_1(k)}+ b(\cdot,\cdot)_{\rvert\mathfrak{m}_2\times\mathfrak{m}_2},
		\end{aligned}
	\end{equation}

	\subsubsection{The homogeneous spaces diffeomorphic to complex projective spaces}

	The Lie groups acting transitively on complex projective spaces were classified by Onishchik, see~\cite{Onishchik2}, and we list them in Table~\ref{table:homcpn}.
	
	\begin{table}[h!]
		\centering
		\begin{tabular}{|c|c|c|c|c|c|}
			\hline
			& $\mathsf{G}$ & $\mathsf{H}$ & $\dim \mathsf{G}/\mathsf{H}$ & Isotropy representation & $\s Z(\mathsf{G})$  \\
			\hline
			(i) & $\mathsf{SU}_{n+1}$ & $\mathsf{U}_n$ & $2n$ & Irreducible & $\zeta\, \mathrm{Id},\, \text{where}\, \zeta^{n+1}=1$  \\
			(ii) & $\mathsf{Sp}_{n+1}$ & $\mathsf{Sp}_n\times\mathsf{U}_1$ & $4n+2$ & $\mathfrak{m} = \mathfrak{m}_1 \oplus \mathfrak{m}_2$ & $\pm \mathrm{Id}$  \\
			\hline
		\end{tabular}\\[2ex]
		\caption{List of connected compact Lie groups $\mathsf{G}$ acting transitively and almost effectively on complex projective spaces up to local isomorphism, together with the decomposition of their isotropy representations. The kernel of ineffectiveness of each $\mathsf{G}$--action is precisely the center of $\mathsf{G}$, which is denoted by $\s Z(\mathsf{G})$. }
		\label{table:homcpn}
	\end{table}
	Now we focus on the family of $\mathsf{Sp}_{n+1}$--invariant metrics on $\mathbb{C}\mathsf{P}^{2n+1}$ given by~\textup{(ii)}.  We define $\mathfrak{g}=\mathfrak{sp}_{n+1}$ and we consider the reductive decomposition $\mathfrak{g}=\mathfrak{h}\oplus\mathfrak{m}$, with $\mathfrak{m}:=\mathfrak{m}_1\oplus\mathfrak{m}_2$, where
	\[ \mathfrak{h}=\left(
	\begin{array}{c|c}
		Z & 0 \\
		\hline
		0 & x i
	\end{array}
	\right), \quad \mathfrak{m}_1=\left(
	\begin{array}{c|c}
		0 & 0 \\
		\hline
		0 & y j + z k
	\end{array}
	\right), \quad \mathfrak{m}_2=\left(
	\begin{array}{c|c}
		0 & v \\
		\hline
		-v^* & 0
	\end{array}
	\right),   \]
	where $x,y,z\in\R$, $v\in\H^n$ and $Z$ belongs to $\mathfrak{sp}_n$. Moreover, we have the homogeneous fibration
	\[
	\Sp_1/\Un_{1} =\s{S}^2{\longrightarrow}\, \Sp_{n+1}/(\Sp_{n}\times \Un_{1})=\mathbb{C}\mathsf{P}^{2n+1}\overset{\pi} {\longrightarrow}
	\Sp_{n+1}/(\Sp_{n} \times \Sp_1) = \mathbb{H}\mathsf{P}^n, 
	\]
	where $g\left( \mathsf{Sp}_n\times\mathsf{U}_1 \right) \overset{\pi}\mapsto  g \left(\mathsf{Sp}_n\times\mathsf{Sp}_1\right)$, and $g\in\mathsf{Sp}_{n+1}$.
	On the one hand, the subspace $\mathfrak{m}_{1}$ is identified with the vertical distribution $\mathcal{V}$ at the base point $o=e\mathsf{H} \in \C \mathsf{P}^{2n+1}$, or equivalently, to the tangent space to the (totally geodesic) fiber $\Sp_{1}\cdot o=\pi^{-1}(\pi(o))$ at $o$. On the other hand, $\mathfrak{m}_{2}$ is identified with the horizontal distribution $\mathcal{H}=\mathcal{V}^{\perp}$ at $o$.  By Equation~(\ref{eq:Homogeneous_Metric}), up to isometry and scaling, any $\Sp_{n+1}$--invariant metric on $\mathbb{C}\mathsf{P}^{2n+1}$ is induced by an $\Sp_{n}\times \Un_{1}$--invariant inner product $\langle \cdot ,\cdot \rangle$ of the form
	\begin{equation}
		\label{eq:metricCPn}
		\langle\cdot ,\cdot \rangle_{\tau} = 2\tau\, b(\cdot , \cdot)_{\rvert \mathfrak{m}_{1}\times\mathfrak{m}_1} + b(\cdot , \cdot)_{\rvert\mathfrak{m}_{2}\times\mathfrak{m}_2},
	\end{equation}
	where $b(X,Y)=-\tfrac{1}{2}\mathrm{Re} \tr_{\H}(XY)$, for each $X,Y\in\mathfrak{sp}_{n+1}$, is  a certain multiple of the Killing form of $\mathfrak{g}=\mathfrak{sp}_{n+1}$. Indeed, one can check that the induced metric is symmetric if and only if $\tau=1$, with sectional curvature between $1$ and $4$, see~\cite{ziller1982} or \cite[p.~64]{vermeirenthesis}. Thus, as a direct consequence of this discussion and Table~\ref{table:homcpn}, we have the following:
	\begin{lemma}\label{lem:Isometry_Group_Projective}The  identity component of the isometry group of $\mathbb{C}\mathsf{P}^{2n+1}_{\tau}$ with $\tau>0$ is
		\[
		\begin{aligned}
			\Isom^{0}(\mathbb{C}\mathsf{P}^{2n+1}_{\tau}) =  \left\{ \begin{array}{ll}
				\SU_{2n+2}/\Z_{2n+2} \quad &\textrm{if $\tau=1$,} \\ \smallskip
				\Sp_{n+1}/\Delta \mathbb{Z}_{2} \quad &\textrm{otherwise.} \\
			\end{array} \right.
		\end{aligned}
		\]
	\end{lemma}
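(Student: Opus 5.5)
The plan is to treat the two values of $\tau$ separately and, in each case, to identify the identity component of the isometry group by combining Onishchik's classification in Table~\ref{table:homcpn} with the structure of the $\mathsf{Sp}_{n+1}$--invariant metrics described above.

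For $\tau=1$ the metric $\langle\cdot,\cdot\rangle_1$ is the symmetric Fubini--Study metric, as recalled above (see~\cite{ziller1982}). Its full isometry group is then the isometry group of the symmetric space $\mathsf{SU}_{2n+2}/\mathsf{S}(\mathsf{U}_1\times\mathsf{U}_{2n+1})$. Hence the identity component is $\mathsf{SU}_{2n+2}$ modulo its center, which by Table~\ref{table:homcpn} is exactly the kernel of ineffectiveness, namely $\Z_{2n+2}$.

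For $\tau\neq1$, let $\mathsf{G}'=\Isom^0(\C\mathsf{P}^{2n+1}_\tau)$. It is a compact connected Lie group acting transitively and effectively on $\C\mathsf{P}^{2n+1}$, and it contains the effective image $\mathsf{Sp}_{n+1}/\Delta\Z_2$ of $\mathsf{Sp}_{n+1}$. By Onishchik's classification (Table~\ref{table:homcpn}), $\mathsf{G}'$ is locally isomorphic either to $\mathsf{SU}_{2n+2}$ or to $\mathsf{Sp}_{n+1}$. In the first case, $\mathsf{G}'$ is the effective quotient of $\mathsf{SU}_{2n+2}$ acting with isotropy $\mathsf{U}_{2n+1}$. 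This isotropy representation is irreducible, so by Schur's lemma the metric is, up to scaling, the unique $\mathsf{SU}_{2n+2}$--invariant one, namely the symmetric metric. Since $\langle\cdot,\cdot\rangle_\tau$ is symmetric only for $\tau=1$, this case is impossible.

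In the second case, $\mathsf{G}'$ has the same dimension as $\mathsf{Sp}_{n+1}/\Delta\Z_2$ and contains it. Since both are connected, they coincide. This gives $\Isom^0=\mathsf{Sp}_{n+1}/\Delta\Z_2$, where the ineffective kernel is $\pm\mathrm{Id}$ as in the table.

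The main point requiring care is the use of Onishchik's list. We need that any connected group of isometries strictly larger than $\mathsf{Sp}_{n+1}$ and acting transitively must be $\mathsf{SU}_{2n+2}$ up to covering. If one prefers an intrinsic argument, the maximality of $\mathsf{Sp}_{n+1}$ inside $\mathsf{SU}_{2n+2}$ among the connected transitive groups can be used instead. Either route then reduces to the Schur-lemma uniqueness argument above.
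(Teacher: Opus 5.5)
Your proof is correct and takes essentially the same route as the paper. The paper obtains the lemma as a direct consequence of two facts: that $\langle\cdot,\cdot\rangle_\tau$ is symmetric exactly when $\tau=1$, and Onishchik's list in Table~\ref{table:homcpn}. You add two steps the paper leaves implicit: the Schur-lemma argument ruling out a transitive $\mathsf{SU}_{2n+2}$ when $\tau\neq1$, and the dimension count forcing $\Isom^0=\mathsf{Sp}_{n+1}/\Delta\Z_2$.
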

	\section{The index of symmetry and canonical variation metrics}
	\label{sec:homfibr}
	In this section, we study the index of symmetry in the context of canonical variations of the normal homogeneous metric of a  homogeneous fibration. Following the notation in \S~\ref{subsec:homspaces}, let us consider a homogeneous fibration given by $$F=\mathsf{K}/\mathsf{H}\rightarrow M=\mathsf{G}/\mathsf{H}\rightarrow B=\mathsf{G}/\mathsf{K}$$ associated with the inclusions of compact Lie groups $\mathsf{H}<\mathsf{K}<\mathsf{G}$. Then, we can consider a canonical variation of the normal homogeneous metric on $\mathsf{G}/\mathsf{H}$ given by
	\begin{equation}
		\label{eq:lambdametric}
		\langle X, Y\rangle_{\lambda}:=\lambda\, b(X_{\mathfrak{m}_1},Y_{\mathfrak{m}_1}) + b(X_{\mathfrak{m}_2},Y_{\mathfrak{m}_2}) \quad \text{for $\lambda>0$},
	\end{equation}
	where $\mathfrak{m}_1:=\mathfrak{h}^\perp\cap\mathfrak{k}\cong\mathcal{V}_o$ and $\mathfrak{m}_2=\mathfrak{k}^\perp\cong\mathcal{H}_o$.
	
	\subsection{The index of symmetry of the unit tangent bundle of $\mathsf{S}^{n+1}$}
	\label{subsec:stiefel}
	Let us consider $T_1 \mathsf{S}^{n+1}$, that is, the unit tangent bundle of $\mathsf{S}^{n+1}$. This is a homogeneous space which can be presented as $M=\mathsf{G}/\mathsf{H}=\mathsf{SO}_{n+2}/\mathsf{SO}_n$. Throughout this subsection, we will assume $n\ge3$ so that $\mathsf{G}$ is simple.
	
	Let us consider the $\Ad(\mathsf{G})$--invariant inner product $b$ on $\mathfrak{g}=\mathfrak{so}_{n+2}$ given by $b(X,Y)=-\tfrac{1}{2}\tr(X Y)$, where $X,Y\in\mathfrak{so}_{n+2}$. We define the subspaces
	\[\mathfrak{m}_0:=\spann\{E_{1,2}\}, \quad \mathfrak{m}_1:=\spann\{E_{1,3}, E_{1,4}, \ldots, E_{1,n+2}\},\quad \mathfrak{m}_2:=\spann\{E_{2,3}, E_{2,4}, \ldots, E_{2,n+2}\}, \]
	where $E_{i,j}$, with $i<j$ denotes the matrix with a $1$ in the position $(i,j)$ and a $-1$ in the position~$(j,i)$. We consider the reductive decomposition of $M$ given by 
	$\mathfrak{g}=\mathfrak{m}\oplus\mathfrak{h}$, where $\mathfrak{m}=\mathfrak{m}_0\oplus\mathfrak{m}_1\oplus\mathfrak{m}_2$, and $\mathfrak{h}\cong \mathfrak{so}_n$ is the orthogonal complement of $\mathfrak{m}$ with respect to $b$ in $\mathfrak{g}$. Then, the map $J= \Ad(\Exp(-\tfrac{\pi}{2}E_{1,2}))$ provides an equivalence of $\mathsf{H}$--representations between $\mathfrak{m}_1$ and $\mathfrak{m}_2$. Notice that one also has that $JX=[X,E_{1,2}]$ for every $X\in\mathfrak{m}_i$, where $i\in\{1,2\}$, and that $J$ restricted to $\mathfrak{m}_1\oplus\mathfrak{m}_2$ satisfies $J^2=-\mathrm{Id}$ and $b(J X,J X)=b(X,X)$ for every $X\in\mathfrak{m}_1\oplus\mathfrak{m}_2$. Moreover, although $\mathfrak{m}_1$ and $\mathfrak{m}_2$ are equivalent $\mathsf{H}$-representations, every $\mathsf{G}$-invariant metric on $M$ can be parametrized up to isometry as follows (see~\cite[p.~120]{kerr}):
	\begin{equation}
		\label{eq:metricstiefel}
		\langle X, Y\rangle_{\lambda, \mu_1, \mu_2}= \lambda b(X_{\mathfrak{m}_0},Y_{\mathfrak{m}_0}) + \mu_1  b(X_{\mathfrak{m}_1},Y_{\mathfrak{m}_1}) + \mu_2 b(X_{\mathfrak{m}_2},Y_{\mathfrak{m}_2}), 
	\end{equation}
	where $\lambda,\mu_1,\mu_2>0$. Using the properties of the map $J$ described above and the Equations~\eqref{eq:dtensor} and~\eqref{eq:LCconnectionkillingind} one obtains the following identities
	\begin{lemma}
		Let $X_{i}, Y_{i},Z_{i}\in\mathfrak{m}_i$ where $i\in\{1,2\}$. Then, at the base point $o\in M$ we have the following equalities:
		\begin{align}
			\langle D_{E_{1,2}} E_{1,2}, E_{1,2}\rangle_{\lambda,\mu_1,\mu_2}&= \langle D_{E_{1,2}} E_{1,2}, X_1\rangle_{\lambda,\mu_1,\mu_2}=\langle D_{E_{1,2}} E_{1,2}, X_2\rangle_{\lambda,\mu_1,\mu_2}\notag\\
			&=\langle D_{X_1} E_{1,2}, E_{1,2}\rangle_{\lambda,\mu_1,\mu_2}=\langle D_{X_1} E_{1,2}, Y_1\rangle_{\lambda,\mu_1,\mu_2}\label{eq:dtensorstiefel0}\\
			&=\langle D_{X_2} E_{1,2}, E_{1,2}\rangle_{\lambda,\mu_1,\mu_2}=\langle D_{X_2} E_{1,2}, Y_2\rangle_{\lambda,\mu_1,\mu_2}=0,\notag\\
			\langle D_{X_i} E_{1,2},X_{i+1}\rangle_{\lambda,\mu_1,\mu_2}&=\frac{\mu_{i+1} -\mu_i +(-1)^{i+1}\lambda}{2}b(J X_1, X_2),\label{eq:dtensorstiefeln0}
		\end{align}
		where $i\in\{1,2\}$ and indices are taken modulo two;
		
		\begin{align}
			\langle \nabla_{E^*_{1,2}} X^*_i, Y^*_i\rangle_{\lambda,\mu_1,\mu_2}&= \langle \nabla_{E^*_{1,2}} X^*_i, E^*_{1,2}\rangle_{\lambda,\mu_1,\mu_2}=\langle \nabla_{E^*_{1,2}} E^*_{1,2}, E^*_{1,2}\rangle_{\lambda,\mu_1,\mu_2}\notag\\
			&=\langle \nabla_{X^*_i} E^*_{1,2}, Y^*_i\rangle_{\lambda,\mu_1,\mu_2}=\langle \nabla_{X^*_i} Y^*_i, E^*_{1,2}\rangle_{\lambda,\mu_1,\mu_2}\label{eq:nablastiefel0}\\
			&=\langle \nabla_{X^*_i} Y^*_i, Z^*_i\rangle_{\lambda,\mu_1,\mu_2}=\langle \nabla_{X^*_{i+1}} X^*_{i}, Y^*_i\rangle_{\lambda,\mu_1,\mu_2}\notag\\
			&=\langle \nabla_{X^*_{i+1}} X^*_{i}, Y^*_{i+1}\rangle_{\lambda,\mu_1,\mu_2}=\langle \nabla_{X^*_{i}} E^*_{1,2}, E^*_{1,2}\rangle_{\lambda,\mu_1,\mu_2}=0,\notag\\
			\langle \nabla_{X^*_i} X_{i+1}^*,E^*_{1,2}\rangle_{\lambda,\mu_1,\mu_2}&=\frac{\mu_i-\mu_{i+1}+(-1)^{i+1}\lambda}{2}b(J X_1, X_2), 	\label{eq:nablastiefel1}\\
			\langle \nabla_{X^*_i} E_{1,2}^*,X^*_{i+1}\rangle_{\lambda,\mu_1,\mu_2}&=(-1)^i \frac{\left(\mu_i + \mu_{i+1}-\lambda\right)}{2}b(J X_1, X_2),\label{eq:nablastiefel2}
		\end{align}
		where $i\in\{1,2\}$ and indices are taken modulo two.
	\end{lemma}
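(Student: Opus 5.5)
The plan is a direct computation. I would use formula~\eqref{eq:dtensor} for the difference tensor $D$, formula~\eqref{eq:LCconnectionkillingindarb} for the Levi-Civita connection, and the algebraic properties of $J$. The bracket relations in $\mathfrak{so}_{n+2}$ needed are elementary:
\[
[\mathfrak{m}_0,\mathfrak{m}_i]\subset\mathfrak{m}_{i+1},\qquad [\mathfrak{m}_i,\mathfrak{m}_i]\subset\mathfrak{h},\qquad [\mathfrak{m}_1,\mathfrak{m}_2]\subset\mathfrak{m}_0\oplus\mathfrak{h}.
\]
Indeed, $[E_{1,k},E_{1,l}]$ and $[E_{2,k},E_{2,l}]$ lie in $\mathfrak{h}$ for $k,l\ge 3$. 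The bracket $[E_{1,k},E_{2,l}]$ has $\mathfrak{m}_0$-component $-\delta_{kl}E_{1,2}$ up to sign, and all its other terms lie in $\mathfrak{h}$. This $\mathfrak{m}_0$-component can be written as $b$-pairings against $E_{1,2}$, namely $b([X_1,X_2],E_{1,2})=b(X_1,[X_2,E_{1,2}])=b(X_1,JX_2)=-b(JX_1,X_2)$, using $\Ad$-invariance of $b$ and $J^2=-\mathrm{Id}$. Every entry of the lemma will then come out proportional to $b(JX_1,X_2)$.

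First I compute the tensor $U$ from~\eqref{eq:U-tensor} for the metric $\langle\cdot,\cdot\rangle_{\lambda,\mu_1,\mu_2}$. Each term $\langle[Z,X]_{\mathfrak{m}},Y\rangle$ vanishes unless the three arguments are, in some order, one vector from each of $\mathfrak{m}_0$, $\mathfrak{m}_1$ and $\mathfrak{m}_2$. This follows from the bracket rules above together with the orthogonality of the three modules.

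This sorting is what gives all the vanishing identities. In~\eqref{eq:dtensorstiefel0} and~\eqref{eq:nablastiefel0}, every listed expression involves a multiset of modules other than $\{0,1,2\}$, such as $\{0,0,0\}$, $\{0,0,i\}$, $\{i,i,j\}$ or $\{i,i,i\}$. Hence both $[\cdot,\cdot]_{\mathfrak{m}}$ and $U$ contribute zero, and so does $D$. By~\eqref{eq:LCconnectionkillingind}, $\nabla$ is a combination of $-\tfrac12[\cdot,\cdot]_{\mathfrak{m}}$ and $U$, so the $\nabla$ terms vanish as well.

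For the nonvanishing entries, I evaluate~\eqref{eq:U-tensor} with one vector in each module. For example, with $Z=E_{1,2}$:
\[
2\langle U(X_1,E_{1,2}),X_2\rangle=\langle[X_2,X_1]_{\mathfrak{m}},E_{1,2}\rangle+\langle X_1,[X_2,E_{1,2}]_{\mathfrak{m}}\rangle .
\]
The first term equals $\lambda\, b([X_2,X_1],E_{1,2})$, and the second equals $\mu_1 b(X_1,JX_2)$. Each is a multiple of $b(JX_1,X_2)$, with coefficients built from $\pm\lambda$, $\pm\mu_1$, $\pm\mu_2$. I add $\tfrac12\langle[X_i,E_{1,2}]_{\mathfrak{m}},X_{i+1}\rangle=\tfrac12\mu_{i+1}b(JX_i,X_{i+1})$ to get $D$ as in~\eqref{eq:dtensorstiefeln0}. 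For~\eqref{eq:nablastiefel1}--\eqref{eq:nablastiefel2}, I use~\eqref{eq:LCconnectionkillingind}, i.e.\ $-\tfrac12[\cdot,\cdot]_{\mathfrak{m}}+U$, which swaps the sign of the bracket contribution.

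One symmetry relation needs checking. We have $b(JX_2,X_1)=-b(X_2,JX_1)$, so $b(JX_i,X_{i+1})=(-1)^{i+1}b(JX_1,X_2)$, and this produces the alternating signs $(-1)^{i+1}$ and $(-1)^i$ in the statement.

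The main obstacle is purely bookkeeping: tracking signs and the modular indexing consistently across the cases $i=1,2$. I would verify the final coefficients with an explicit check, taking $X_1=E_{1,3}$ and $X_2=E_{2,3}$, so that $JX_1=[E_{1,3},E_{1,2}]=\pm E_{2,3}$. As a sanity check, the normal case $\lambda=\mu_1=\mu_2=1$ must give $U=0$ and reproduce $D=\tfrac12[\cdot,\cdot]_{\mathfrak{m}}$.
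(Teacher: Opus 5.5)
Your route is the paper's own: it simply asserts that the identities follow from \eqref{eq:dtensor}, \eqref{eq:LCconnectionkillingind} and the properties of $J$. Your treatment of the vanishing identities is correct and efficient. Since $[\mathfrak m_0,\mathfrak m_i]\subset\mathfrak m_{i+1}$, $[\mathfrak m_i,\mathfrak m_i]\subset\mathfrak h$ and $[\mathfrak m_1,\mathfrak m_2]\subset\mathfrak m_0$, and the metric is diagonal, both $[\cdot,\cdot]_{\mathfrak m}$ and $U$ only see the pattern $\{0,1,2\}$. In fact $[E_{1,k},E_{2,l}]=-\delta_{kl}E_{1,2}$ exactly, with no $\mathfrak h$-part.

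\textbf{The gap is the final sign step.} You assert that $b(JX_i,X_{i+1})=(-1)^{i+1}b(JX_1,X_2)$ ``produces the alternating signs in the statement''. Your own recipe gives, uniformly in $i$,
\[
\langle D_{X_i}E_{1,2},X_{i+1}\rangle=\tfrac12(\mu_{i+1}-\mu_i+\lambda)\,b(JX_i,X_{i+1}),\qquad
\langle\nabla_{X_i^*}X_{i+1}^*,E_{1,2}^*\rangle=\tfrac12(\mu_i-\mu_{i+1}+\lambda)\,b(JX_i,X_{i+1}),
\]
\[
\langle\nabla_{X_i^*}E_{1,2}^*,X_{i+1}^*\rangle=\tfrac12(\lambda-\mu_i-\mu_{i+1})\,b(JX_i,X_{i+1}).
\]
Converting to $b(JX_1,X_2)$ therefore multiplies the \emph{whole} coefficient by $(-1)^{i+1}$, not just the $\lambda$-term. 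This reproduces \eqref{eq:nablastiefel2} and all $i=1$ cases. For $i=2$, however, it gives $\frac{\mu_2-\mu_1-\lambda}{2}\,b(JX_1,X_2)$ in \eqref{eq:dtensorstiefeln0} and $\frac{\mu_1-\mu_2-\lambda}{2}\,b(JX_1,X_2)$ in \eqref{eq:nablastiefel1}. Both have the opposite $\mu$-sign from what is printed.

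Your proposed explicit check confirms this. With $X_1=E_{1,3}$ and $X_2=E_{2,3}$ one has $JX_1=X_2$, $JX_2=-X_1$ and $[X_1,X_2]=-E_{1,2}$, and \eqref{eq:L-C} gives $2\langle\nabla_{X_2^*}X_1^*,E_{1,2}^*\rangle=\mu_1-\mu_2-\lambda$. Your other sanity check, $\lambda=\mu_1=\mu_2=1$, cannot catch this, because the discrepancy is proportional to $\mu_1-\mu_2$.

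So the printed $i=2$ cases of those two formulas contain a sign typo. The correct uniform versions are $\frac{\mu_2-\mu_1+(-1)^{i+1}\lambda}{2}\,b(JX_1,X_2)$ and $\frac{\mu_1-\mu_2+(-1)^{i+1}\lambda}{2}\,b(JX_1,X_2)$. These corrected values are what the paper actually uses later:
\begin{itemize}
\item In \eqref{eq:compkillingstiefel2}, $\langle D_Xv,X\rangle=(\mu_2-\mu_1)\,b(JX_{\mathfrak m_1},X_{\mathfrak m_2})$; with the printed values this sum would be $0$.
\item In Theorem~\ref{th:indstiefel}, ``$\mathfrak m_1$ gives a leaf iff $\lambda=\mu_1-\mu_2$'' requires $\langle\nabla_{X_2^*}X_1^*,E_{1,2}^*\rangle=\frac{\mu_1-\mu_2-\lambda}{2}\,b(JX_1,X_2)$.
\end{itemize}

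You need to carry out the bookkeeping rather than defer it. Doing so proves the lemma with the two $i=2$ coefficients corrected, and shows that the statement as printed is false whenever $\mu_1\neq\mu_2$.
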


	Now we will determine the Lie algebra of the isometry group of each one of the metrics defined in \eqref{eq:metricstiefel}. We refer the reader to \cite{Onishchik3} or also~\cite[\S  6]{JDV-MDV-DGA-ARV} for the necessary details on Onishchik's extension theory that we will use from now on. 
	\begin{lemma}
		\label{lem:isomstiefel}
		The algebra of Killing vector fields of $(T_1\mathsf{S}^{n+1},\langle\cdot,\cdot\rangle_{\lambda,\mu_1,\mu_2})$ with $n\ge3$ is isomorphic to $\mathfrak{so}_{n+2}$ when $\mu_1\neq\mu_2$ and isomorphic to $\mathfrak{so}_{n+2}\oplus\mathfrak{so}_2$ when $\mu_1=\mu_2$.
	\end{lemma}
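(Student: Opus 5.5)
We need the full isometry algebra of $T_1\mathsf{S}^{n+1}=\mathsf{SO}_{n+2}/\mathsf{SO}_n$ with the metric \eqref{eq:metricstiefel}. The plan is to prove two containments. First we exhibit enough Killing fields: $\mathfrak{so}_{n+2}$ always, plus an extra $\mathfrak{so}_2$ when $\mu_1=\mu_2$. Then we use Onishchik's classification of transitive extensions of compact connected groups to show nothing larger occurs.

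\emph{Lower bound.} The normalizer of $\mathsf{H}=\mathsf{SO}_n$ in $\mathsf{G}$ contains $\mathsf{SO}_2=\Exp(\R E_{1,2})$. This group acts on $M$ on the right by $g\mathsf{H}\mapsto g\,\Exp(tE_{1,2})\mathsf{H}$. Such a right action is $\mathsf{G}$--equivariant, and it is isometric exactly when $\Ad(\Exp(tE_{1,2}))$ preserves the inner product $\langle\cdot,\cdot\rangle_{\lambda,\mu_1,\mu_2}$ on $\mathfrak{m}$. The map $\Ad(\Exp(tE_{1,2}))$ fixes $\mathfrak{m}_0$ and acts on $\mathfrak{m}_1\oplus\mathfrak{m}_2$ as $\cos t\,\mathrm{Id}\pm\sin t\,J$, rotating $\mathfrak{m}_1$ into $\mathfrak{m}_2$. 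Since $J$ is a $b$--isometry exchanging $\mathfrak{m}_1$ and $\mathfrak{m}_2$, this rotation preserves the metric if and only if $\mu_1=\mu_2$. Its generator commutes with the $\mathsf{G}$--action and is not induced by $\mathfrak{g}$: a field of the form $Y^*$ with $Y\in\mathfrak{g}$ that commutes with all of $\mathfrak{g}$ would force $Y$ to be central, which is impossible since $\mathfrak{so}_{n+2}$ is simple for $n\ge3$. Hence when $\mu_1=\mu_2$ the Killing algebra contains $\mathfrak{so}_{n+2}\oplus\mathfrak{so}_2$.

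\emph{Upper bound.} Let $\tilde{\mathsf{G}}=\Isom^0(M)\supseteq\mathsf{SO}_{n+2}$. This is a compact connected group acting transitively on $V_2(\R^{n+2})$. By Onishchik's theory (\cite{Onishchik3}, \cite[\S6]{JDV-MDV-DGA-ARV}), the transitive connected extensions of $\mathsf{SO}_{n+2}$ on the Stiefel manifold $\mathsf{SO}_{n+2}/\mathsf{SO}_n$ are listed. For $n\ge3$ and $n+2$ generic, the only proper one is $\mathsf{SO}_{n+2}\times\mathsf{SO}_2$, with isotropy $\mathsf{SO}_n\times\Delta\mathsf{SO}_2$. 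The exceptional ambient dimensions, where $\mathsf{S}^{n+1}$ or $V_2$ admit other transitive actions (e.g.\ through $\mathsf{Spin}_7$ or $\mathsf{G}_2$), must be checked separately. One shows that in those cases the enlarged group does not contain $\mathsf{SO}_{n+2}$ in the required way. Alternatively, its isotropy representation would force $\mathfrak{m}_1\oplus\mathfrak{m}_2$ to be irreducible or would identify $\mathfrak{m}_0$ with non-trivial modules, incompatible with the isometric containment.

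It then remains to show that if $\tilde{\mathfrak{g}}=\mathfrak{so}_{n+2}\oplus\mathfrak{so}_2$, then $\mu_1=\mu_2$. In that case the extra $\mathfrak{so}_2$ centralizes $\mathfrak{so}_{n+2}$, so it acts on $M$ by right translation by elements of $N_{\mathsf{G}}(\mathsf{H})/\mathsf{H}$, whose identity component is exactly $\Exp(\R E_{1,2})$. By the computation in the lower-bound step, this action is isometric only if $\mu_1=\mu_2$. Combining the two steps gives the lemma.

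The main obstacle is the upper bound: correctly applying Onishchik's extension list to rule out larger isometry groups, especially in the low-dimensional exceptional cases. There I would first try to use that any extension must preserve the invariant splitting into $\mathfrak{m}_0$ and $\mathfrak{m}_1\oplus\mathfrak{m}_2$ with two distinct fiber structures.
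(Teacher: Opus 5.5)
Your lower bound, and your argument that an extra central $\mathfrak{so}_2$ forces $\mu_1=\mu_2$, are correct and take a different route from the paper. You identify the flow of a Killing field commuting with $\mathsf{G}$ with right translation by the one-parameter group $\Exp(\R E_{1,2})\subset N_{\mathsf{G}}(\mathsf{H})$. This reduces everything to whether $\Ad(\Exp(tE_{1,2}))$, which rotates $\mathfrak{m}_1$ into $\mathfrak{m}_2$, preserves $\langle\cdot,\cdot\rangle_{\lambda,\mu_1,\mu_2}$. The paper instead takes the $\mathsf{G}$--invariant field $V$ with $V_o=(E_{1,2})^*_o$ and computes $\langle D_XV,X\rangle_{\lambda,\mu_1,\mu_2}=(\mu_2-\mu_1)\,b(JX_{\mathfrak{m}_1},X_{\mathfrak{m}_2})$ in both directions; your version avoids that computation. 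One small correction: $\Ad(\Exp(\pi E_{1,2}))$ is $-\mathrm{Id}$ on $\mathfrak{m}_1\oplus\mathfrak{m}_2$ and so preserves every such metric, so state your ``if and only if'' for the whole one-parameter group, or for $t\notin\pi\Z$.

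\textbf{The gap is the upper bound.} This is the substance of the lemma, and you assert it rather than prove it.

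\emph{The generic case is only restated.} Saying that Onishchik's list gives only $\mathsf{SO}_{n+2}\times\mathsf{SO}_2$ ``for $n+2$ generic'' just repeats the claim. One must exclude two kinds of extension:
\begin{enumerate}
\item[(a)] type~II extensions, i.e.\ simple groups properly containing $\mathsf{SO}_{n+2}$ and still transitive, which the paper does by inspecting \cite[Table~7]{Onishchik3};
\item[(b)] type~III (product) extensions, which can be discarded for $n\notin\{2,6\}$ only because $\mathsf{SO}_{n+2}/\mathsf{SO}_n$ is not diffeomorphic to a nontrivial product \cite[Theorem~3.6]{hsiangstiefel}.
\end{enumerate}
Only then does \cite[Theorem~6.2]{Onishchik3} leave just the type~I extension by $\mathsf{Z}_{\mathfrak g}(\mathfrak h)=\mathfrak{m}_0\cong\mathfrak{so}_2$.

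\emph{Your exceptional cases are the wrong ones.} The transitive actions of $\mathsf{Spin}_7$ and $\mathsf{G}_2$ on $V_2(\R^8)$ and $V_2(\R^7)$ come from subgroups of $\mathsf{SO}_{n+2}$. They therefore say nothing about groups containing $\mathsf{SO}_{n+2}$. The case that genuinely needs separate treatment is $n=6$. There $T_1\mathsf{S}^7\cong\mathsf{S}^7\times\mathsf{S}^6$, so a type~III extension is a priori possible. The paper rules it out by noting that such an extension would yield an $\mathsf{SO}_8$--invariant product Einstein metric. This contradicts the Sagle--Kerr result that the only $\mathsf{SO}_{n+2}$--invariant Einstein metric, up to homothety, is irreducible.

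\emph{Your fallback does not work either.} You suggest that any extension ``must preserve the splitting into $\mathfrak{m}_0$ and $\mathfrak{m}_1\oplus\mathfrak{m}_2$''. This cannot be assumed: an isometry group with larger isotropy is exactly what could mix these modules, so the assumption presupposes the conclusion.
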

	\begin{proof}
		Firstly, notice that  $M=\mathsf{SO}_{n+2}/\mathsf{SO}_{n}$ admits a type~I extension as $\mathsf{Z}_{\mathfrak{g}}(\mathfrak{h})=\mathfrak{m}_0\cong\mathfrak{so}_2$. Moreover, by \cite[Theorem~3.6]{hsiangstiefel}, if $n\not\in\{2,6\}$, then $M$ cannot be decomposed into a product of two homogeneous spaces with positive dimensions, and thus it does not allow type~III extensions. By inspection of Table~\cite[Table~7]{Onishchik3}, one can check that $M$ does not allow type~II extensions.
		
		When $n=6$, $M$ is diffeomorphic to the product $\mathsf{S}^7\times\mathsf{S}^6$.  Now assume for a contradiction that $M$ admits a type~III extension when $n=6$. This implies that $\mathfrak{g}=\mathfrak{so}_8$ would extend to  $\mathfrak{so}_8\oplus\mathfrak{g}_1$, in such a way that the connected Lie subgroup associated with the first/second factor of $\mathfrak{so}_8\oplus \mathfrak{g}_1$ acts transitively on the first/second factor of $M=\mathsf{S}^7\times\mathsf{S}^6$. As every transitive and effective action of a compact connected Lie group on $\mathsf{S}^k$ is the restriction of the standard action of $\mathsf{SO}_{k+1}$ to a certain subgroup of  $\mathsf{SO}_{k+1}$, we deduce that there is an $\mathsf{SO}_8$--invariant Einstein metric on $M=\mathsf{S}^7\times\mathsf{S}^6$ corresponding to the product of the round metrics of $\mathsf{S}^7$ and $\mathsf{S}^6$ with the same scalings. However, Sagle found in~\cite{sagle_einstein} that there is one irreducible $\mathsf{SO}_{n+2}$--invariant  Einstein metric on $M$, and Kerr proved in~\cite{kerr} that this is the only $\mathsf{SO}_{n+2}$--invariant  Einstein metric on $M$ (up to homotheties), leading to a contradiction with the existence of type III extensions. Consequently, as $\mathsf{G}$ is simple, by \cite[Theorem~6.2]{Onishchik3}, the only possible extension is of type I, and thus the Lie algebra of the isometry group of an $\mathsf{SO}_{n+2}$--invariant metric on $M$ is isomorphic to either $\mathfrak{so}_{n+2}$ or $\mathfrak{so}_{n+2}\oplus\mathfrak{so}_2$.

		Let us assume that the Lie algebra of $\Isom(M,\langle\cdot,\cdot\rangle_{\lambda,\mu_1,\mu_2})$ is isomorphic to $\mathfrak{so}_{n+2}\oplus\mathfrak{a}$, where $\mathfrak{a}\cong\mathfrak{so}_2$ and let $V$ be the Killing vector field induced by $\mathfrak{a}$. Without loss of generality, we can assume that $V_o\neq 0$ by the effectiveness of the isometry group action. Moreover, notice that the isotropy representation of $M=\mathsf{SO}_{n+2}/\mathsf{SO}_{n}$  fixes the $1$-dimensional subspace $\mathfrak{m}_0$ of $\mathfrak{m}$ which is spanned by $v:=E_{1,2}$.  Since $[\mathfrak{so}_{n+2},\mathfrak{a}]=0$, we have $g_{*} V_o=V_{g\cdot o}$ for every $g\in\mathsf{SO}_{n+2}$, and thus, $V$ is $\mathsf{SO}_{n+2}$--invariant and also of constant length.
		Moreover, $V$ is also $\mathsf{SO}_{n}$--invariant. Because any $\mathsf{SO}_n$--invariant vector field must have a value at the origin corresponding to a vector in $\mathfrak{m}_0=\mathsf{Z}_{\mathfrak{m}}(\mathfrak{h})$, and $\mathfrak{m}_0$ is $1$-dimensional, $V_o$ must be proportional to $v^*_o$. Now, for each $X\in\mathfrak{m}$, we have
		$(\nabla_{X^{*}} V)_o=(\nabla_{X^*} - \nabla_{X^*}^c)_{o} V=D_{X} v$,
		where we have used  that every $\mathsf{SO}_{n+2}$--invariant tensor is parallel under the canonical connection $\nabla^c$; and that $D$ is a $(1,2)$-tensor. Now if we take $X=E_{13}+ E_{23}$, as $V$ is Killing we can compute
		\begin{equation}
			\label{eq:compkillingstiefel}
			0=\langle(\nabla_{X^*} V)_o,X^*\rangle_{\lambda,\mu_1,\mu_2}=\langle D_{X} v,X\rangle_{\lambda,\mu_1,\mu_2}=\mu_2-\mu_1. 
		\end{equation}
		Consequently, if the Lie algebra of $\Isom(M,\langle\cdot,\cdot\rangle_{\lambda,\mu_1,\mu_2})$ is isomorphic to $\mathfrak{so}_{n+2}\oplus\mathfrak{so}_2$, we have $\mu_1=\mu_2$. 
		
		Conversely, let us assume that $\mu_1=\mu_2$. Now we define the $\mathsf{G}$--invariant vector field $V$ such that $V_o=v^*_o$, where $v:=E_{1,2}\in\mathfrak{m}_0$. Now observe that for each $X\in\mathfrak{m}$, we have
		\begin{equation}
			\label{eq:compkillingstiefel2}
			\begin{aligned}
				\langle D_X V, X\rangle_{\lambda,\mu_1,\mu_2}&=\langle D_{X_{\mathfrak{m}_0}} V, X_{\mathfrak{m}_0}\rangle_{\lambda,\mu_1,\mu_2} + \langle D_{X_{\mathfrak{m}_0}} V, X_{\mathfrak{m}_1}\rangle_{\lambda,\mu_1,\mu_2} + \langle D_{X_{\mathfrak{m}_0}} V, X_{\mathfrak{m}_2}\rangle_{\lambda,\mu_1,\mu_2}\\
				&+\langle D_{X_{\mathfrak{m}_1}} V, X_{\mathfrak{m}_0}\rangle_{\lambda,\mu_1,\mu_2} + \langle D_{X_{\mathfrak{m}_1}} V, X_{\mathfrak{m}_1}\rangle_{\lambda,\mu_1,\mu_2} + \langle D_{X_{\mathfrak{m}_1}} V, X_{\mathfrak{m}_2}\rangle_{\lambda,\mu_1,\mu_2}\\
				&+\langle D_{X_{\mathfrak{m}_2}} V, X_{\mathfrak{m}_0}\rangle_{\lambda,\mu_1,\mu_2} + \langle D_{X_{\mathfrak{m}_2}} V, X_{\mathfrak{m}_1}\rangle_{\lambda,\mu_1,\mu_2} + \langle D_{X_{\mathfrak{m}_2}} V, X_{\mathfrak{m}_2}\rangle_{\lambda,\mu_1,\mu_2}\\
				&=(\mu_2-\mu_1)b(J X_{\mathfrak{m}_1}, X_{\mathfrak{m}_2}), 
			\end{aligned}
		\end{equation}
		where $X\in\mathfrak{m}$ and we have used Equations~\eqref{eq:dtensorstiefel0} and~\eqref{eq:dtensorstiefeln0}. 
		Since $V$ is $\mathsf{G}$--invariant, we have that $\langle \nabla_{X^*} V, X^*\rangle_{\lambda,\mu_1,\mu_2}=\langle D_{X} V, X\rangle_{\lambda,\mu_1,\mu_2}$, for each $X\in\mathfrak{m}$. Hence, Equation~\eqref{eq:compkillingstiefel2} implies that $V$ is a Killing vector field when $\mu_1=\mu_2$. Consequently, the Lie algebra of 
		$\Isom(M,\langle\cdot,\cdot\rangle_{\lambda,\mu_1,\mu_2})$ is isomorphic to $\mathfrak{so}_{n+2}\oplus\mathfrak{so}_2$, yielding the desired result.
	\end{proof}
	
	Now we are in a position to compute the index of symmetry of the unit tangent bundle of $\mathsf{S}^{n+1}$ with an $\mathsf{SO}_{n+2}$--invariant metric.
	\begin{theorem}
		\label{th:indstiefel}
		The index of symmetry of an $\mathsf{SO}_{n+2}$--invariant metric on the smooth manifold $M=T_1 \mathsf{S}^{n+1}=\mathsf{SO}_{n+2}/\mathsf{SO}_n$ with $n\ge3$ is given by
		\begin{equation}
			\begin{aligned}
				i_{\mathfrak{s}}(M,\langle\cdot,\cdot\rangle_{\lambda,\mu_1,\mu_2}) =  \left\{ \begin{array}{ll}
					1 \quad &\textrm{when $\mu_1=\mu_2$, or $\lambda=\mu_1+\mu_2$,} \\ \smallskip
					n \quad &\textrm{when $\lambda=\rvert \mu_1-\mu_2\rvert$,}  \\ \smallskip
					0 \quad &\textrm{otherwise}.
				\end{array} \right.
			\end{aligned}
		\end{equation}
		
	\end{theorem}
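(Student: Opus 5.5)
The plan is to compute the distribution of symmetry $\mathfrak{s}_o$ at the base point $o$ directly. Since $M$ is homogeneous, $\dim\mathfrak{s}_p$ is constant in $p$, so $i_{\mathfrak{s}}(M)=\dim\mathfrak{s}_o$. By Lemma~\ref{lem:isomstiefel}, every Killing field is of the form $Y^*+cV$. Here $Y\in\mathfrak{so}_{n+2}$, $c\in\R$, and $V$ is the $\mathsf{G}$--invariant field with $V_o=E_{1,2}{}^*_o$; the term $cV$ occurs only when $\mu_1=\mu_2$. Write $Y=Z+W$ with $Z=aE_{1,2}+X_1+X_2\in\mathfrak{m}$ and $W\in\mathfrak{h}$. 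Using $\nabla=\nabla^c+D$, the fact that $W^*$ vanishes at $o$, and Equation~\eqref{eq:LCconnectionkillingindarb}, I expect
\[
(\nabla (Y^*+cV))_o=D_Z+\ad(W)|_{\mathfrak{m}}+c\,D_{(\cdot)}E_{1,2},
\]
up to a sign convention on the $\ad(W)$ term. The transvection condition is that this skew endomorphism of $\mathfrak{m}$ vanishes, and $\mathfrak{s}_o$ is the set of the corresponding values $Z+cE_{1,2}$.

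First, I will write out the block structure of each term with respect to $\mathfrak{m}_0\oplus\mathfrak{m}_1\oplus\mathfrak{m}_2$. The term $\ad(W)$, $W\in\mathfrak{h}\cong\mathfrak{so}_n$, kills $\mathfrak{m}_0$, preserves $\mathfrak{m}_1$ and $\mathfrak{m}_2$, and commutes with $J$. Equations~\eqref{eq:dtensorstiefel0}--\eqref{eq:dtensorstiefeln0}, together with the torsion identity $D_XY-D_YX=[X,Y]_{\mathfrak{m}}$, give the following.
\begin{itemize}
\item $D_{E_{1,2}}$ kills $\mathfrak{m}_0$ and interchanges $\mathfrak{m}_1$ and $\mathfrak{m}_2$ via a multiple of $J$.
\item $D_{X_i}$ has only off-diagonal blocks between $\mathfrak{m}_0$ and $\mathfrak{m}_{i+1}$.
\item The $\mathfrak{m}_0$--components of $D_{X_i}$ carry the coefficients $\tfrac{\mu_{i+1}-\mu_i+(-1)^{i+1}\lambda}{2}$ (or their counterparts from \eqref{eq:nablastiefel1}--\eqref{eq:nablastiefel2}).
\item The field $V$ contributes $(\mu_2-\mu_1)$ times a $J$--block, by \eqref{eq:compkillingstiefel2}.
\end{itemize}
The diagonal blocks $\mathfrak{m}_1\to\mathfrak{m}_1$ and $\mathfrak{m}_2\to\mathfrak{m}_2$ of $D_Z+cD_{(\cdot)}E_{1,2}$ vanish. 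Hence these blocks force $\ad(W)|_{\mathfrak{m}_1}=0$, and since $\mathfrak{h}$ acts faithfully on $\mathfrak{m}_1$, we get $W=0$. The problem is then purely linear: decide when $D_Z+cD_{(\cdot)}E_{1,2}=0$.

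Next I split into components, since the blocks decouple. The $\mathfrak{m}_1\leftrightarrow\mathfrak{m}_2$ block involves only $a$ and $c$. Its coefficient for $a$ is a multiple of $\mu_1+\mu_2-\lambda$, from \eqref{eq:nablastiefel2}, and the $c$--contribution is a multiple of $\mu_2-\mu_1$. Thus a nonzero direction $E_{1,2}$ lies in $\mathfrak{s}_o$ iff $\lambda=\mu_1+\mu_2$ (take $c=0$) or $\mu_1=\mu_2$ (cancel with $c$, using that $V$ exists exactly then). The $\mathfrak{m}_0\leftrightarrow\mathfrak{m}_{i+1}$ blocks involve only $X_i$. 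Hence $X_i\in\mathfrak{s}_o$ for all $X_i\in\mathfrak{m}_i$ precisely when the corresponding coefficient vanishes, namely $\lambda=\mu_1-\mu_2$ for one $i$ and $\lambda=\mu_2-\mu_1$ for the other. Otherwise the component $X_i$ must be zero. Together this gives $\dim\mathfrak{s}_o=n$ when $\lambda=|\mu_1-\mu_2|$, $\dim\mathfrak{s}_o=1$ when $\mu_1=\mu_2$ or $\lambda=\mu_1+\mu_2$, and $\dim\mathfrak{s}_o=0$ otherwise. These cases are mutually exclusive, because $\lambda>0$ and positivity of the $\mu_i$ prevent overlaps.

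The main obstacle I expect is bookkeeping. I must get the exact coefficients and signs of each block of $D_{E_{1,2}}$, $D_{X_i}$ and $D_{(\cdot)}E_{1,2}$ right from the listed identities, in particular the $\mathfrak{m}_0$--components $\langle D_{X_i}Y_{i+1},E_{1,2}\rangle$, which are not listed explicitly and must be recovered via torsion and skew-symmetry. I also need to justify rigorously that $\ad(W)$ cannot compensate any block, so that $W=0$. I would first verify the sign of the $\ad(W)$ term on a simple example, and then check the three critical hyperplanes $\lambda=\mu_1+\mu_2$, $\lambda=\mu_1-\mu_2$ and $\lambda=\mu_2-\mu_1$ against the Sasakian case $\mu_1=\mu_2$ as a consistency test.
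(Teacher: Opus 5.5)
Your route differs from the paper's and is structurally sound. The paper argues representation-theoretically. Since $\mathfrak{s}_o$ is a proper $\mathsf{SO}_n$--invariant subspace, it must be one of $\mathfrak{m}_0$, $V_{\alpha,\beta}$, $\mathfrak{m}_0\oplus V_{\alpha,\beta}$, $\mathfrak{m}_1\oplus\mathfrak{m}_2$. The last is excluded by the coindex-one result of \cite{BOR}, and the mixed $V_{\alpha,\beta}$ by evaluating $\nabla_{E^*_{1,2}}$. The surviving candidates are then tested with the specific fields $E^*_{1,2}$, $X_i^*$ and $c_1E^*_{1,2}+c_2V$.

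You instead take an arbitrary Killing field $Y^*+cV$ and compute $(\nabla(Y^*+cV))_o=D_Z+\ad(W)+c\,D_{(\cdot)}E_{1,2}$ block by block. I checked that the $\mathfrak{m}_1\to\mathfrak{m}_1$ and $\mathfrak{m}_2\to\mathfrak{m}_2$ blocks of $D_Z+c\,D_{(\cdot)}E_{1,2}$ vanish. I also checked that the other blocks decouple as you claim: $a,c$ occur only in $\mathfrak{m}_1\leftrightarrow\mathfrak{m}_2$, $X_1$ only in $\mathfrak{m}_0\leftrightarrow\mathfrak{m}_2$, and $X_2$ only in $\mathfrak{m}_0\leftrightarrow\mathfrak{m}_1$. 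So faithfulness of $\mathfrak{so}_n$ on $\mathfrak{m}_1$ does give $W=0$. The paper leaves this point implicit, since it tests $X_i^*$ rather than $X_i^*+W^*$. The decoupling also removes $V_{\alpha,\beta}$ and $\mathfrak{m}_1\oplus\mathfrak{m}_2$ at once, so you need neither the list of invariant subspaces nor \cite{BOR}. The price is that every coefficient must be right, and two of yours are not.

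First, the $V$--term is wrong. Equation~\eqref{eq:compkillingstiefel2} computes the quadratic form $\langle D_XE_{1,2},X\rangle$, i.e.\ the symmetric part of $X\mapsto D_XE_{1,2}$. It vanishes exactly when $V$ is Killing and says nothing about the skew part, which in that case is all of $(\nabla V)_o$. By \eqref{eq:dtensorstiefeln0} with $i=1$, the $\mathfrak{m}_1\to\mathfrak{m}_2$ entry of $c\,D_{(\cdot)}E_{1,2}$ is $c\,\tfrac{\mu_2-\mu_1+\lambda}{2}\,b(Ju_1,u_2)$ for $u_i\in\mathfrak{m}_i$. This equals $c\lambda/2\neq0$ in the only case, $\mu_1=\mu_2$, where $c$ is allowed.

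With your coefficient, the $c$--term would vanish exactly when $V$ exists, so ``cancel with $c$'' is impossible. You would in fact be asserting $(\nabla V)_o=0$, which makes the $\mathsf{G}$--invariant field $V$ parallel on the compact simply connected $M$; that is absurd. With the correct entry the condition is $a(\lambda-2\mu_1)+c\lambda=0$, matching the paper's $c_1(2\mu_1-\lambda)-c_2\lambda=0$. Its solutions form a line. You should also record that the resulting value $(a+c)E_{1,2}=\tfrac{2\mu_1}{\lambda}\,a\,E_{1,2}$ is nonzero for $a\neq0$.

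Second, the $i=2$ instance of the coefficient you quote from \eqref{eq:dtensorstiefeln0}, read literally, is $\tfrac{\mu_1-\mu_2-\lambda}{2}$. It vanishes on the same hyperplane $\lambda=\mu_1-\mu_2$ as the $i=1$ instance, which would put all of $\mathfrak{m}_1\oplus\mathfrak{m}_2$ into $\mathfrak{s}_o$ and contradict your count. The $i=2$ case of \eqref{eq:nablastiefel1} has the same sign slip in the $\mu$--terms. Computing directly, or using the automorphism $e_1\leftrightarrow e_2$ that swaps $\mathfrak{m}_1,\mathfrak{m}_2$ and $\mu_1,\mu_2$, gives $\langle D_{X_2}E_{1,2},X_1\rangle=\tfrac{\mu_2-\mu_1-\lambda}{2}\,b(JX_1,X_2)$. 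This vanishes iff $\lambda=\mu_2-\mu_1$, which is what your conclusion needs. With these two corrections your blockwise argument proves the theorem.
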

	\begin{proof}
		First of all, notice that the distribution of symmetry $\mathfrak{s}$ at the base point $o\in M$ is identified with an isotropy invariant subspace of $\mathfrak{m}$. By Lemma~\ref{lem:isomstiefel}, $\mathfrak{s}_o$ must be a proper subspace of $\mathfrak{m}$ as otherwise $M$ would be carrying a symmetric metric yielding a contradiction. Let us define \begin{equation}
			\label{eq:invG2sub}
			V_{\alpha,\beta}:=\{ \alpha v + \beta J v: v\in \mathfrak{m}_1\}, \quad \text{for some $\alpha,\beta\in\R$ satisfying $\alpha^2+\beta^2\neq 0$.} 
		\end{equation}	
		As $\mathfrak{m}_1$ and $\mathfrak{m}_2$ are equivalent irreducible $\mathsf{SO}_n$-modules of real type, we deduce that every proper non-trivial  $\mathsf{SO}_n$--invariant subspace of  $\mathfrak{m}$ is equal to
		\[\mathfrak{m}_0, \quad \mathfrak{m}_0\oplus V_{\alpha,\beta}, \quad V_{\alpha,\beta}, \quad \text{or} \quad  \mathfrak{m}_1\oplus\mathfrak{m}_2.\]
		Notice that by~\cite[Theorem~5.3]{BOR}, $\mathfrak{s}\neq \mathfrak{m}_1\oplus\mathfrak{m}_2$ as there are no homogeneous metrics with coindex equal to one on $M$. Now we prove that $V_{\alpha,\beta}\not\subset \mathfrak{s}$ when $\alpha\beta\neq0$. For the sake of contradiction, let us assume that $\alpha,\beta\neq0$ and that  $\xi\in V_{\alpha,\beta}\setminus\{0\}$ given by $\xi:=\alpha X_1 +\beta J X_1$, where $X_1\in\mathfrak{m}_1$ is a unit vector with respect to $b$. By Equations~\eqref{eq:nablastiefel1} and \eqref{eq:nablastiefel2}, we deduce
		\begin{equation}
			\label{eq:transvecstiefel1}
			\langle \nabla_{E^*_{1,2}} \xi^*,X^{*}_{1}\rangle_{\lambda,\mu_1,\mu_2}=\frac{\beta}{2}(\mu_2-\mu_1-\lambda), \quad \langle \nabla_{E^*_{1,2}} \xi^*,JX^{*}_{1}\rangle_{\lambda,\mu_1,\mu_2}=\frac{\alpha}{2}(\mu_2-\mu_1+\lambda).
		\end{equation}
		This proves that $V_{\alpha,\beta}\not\subset \mathfrak{s}$ when $\alpha\beta\neq0$, as otherwise we would have $\lambda=0$ which yields a contradiction. Therefore, the only possibilities for $\mathfrak{s}$ are
		\[\mathfrak{m}_0, \quad \mathfrak{m}_0\oplus\mathfrak{m}_i, \quad \mathfrak{m}_i, \quad \text{where $i\in\{1,2\}$}. \]
		
		On the one hand, if we assume that $\mu_1\neq\mu_2$, Equations~\eqref{eq:nablastiefel0}, \eqref{eq:nablastiefel1}, and~\eqref{eq:nablastiefel2}, imply that $\mathfrak{m}_1$ gives rise to a leaf of symmetry if and only if $\lambda=\mu_1-\mu_2$; and $\mathfrak{m}_2$ gives rise to a leaf of symmetry if and only if $\lambda=\mu_2-\mu_1$. Moreover, by  Equations~\eqref{eq:nablastiefel0}, \eqref{eq:nablastiefel1}, and~\eqref{eq:nablastiefel2}, $E_{1,2}^*$ is an infinitesimal transvection at $o$ if and only if $\lambda=\mu_1+\mu_2$. Consequently, the distribution of symmetry at $o$ is $\mathfrak{m}_i$ when $\lambda=\mu_i-\mu_{i+1}$, where $i\in\{1,2\}$ is taken modulo $2$; and $\mathfrak{m}_0$ when $\lambda=\mu_1+\mu_2$.
		
		On the other hand, let us assume that $\mu_1=\mu_2$. By Lemma~\ref{lem:isomstiefel}, the algebra of Killing vector fields of $(M,\langle\cdot,\cdot\rangle_{\lambda,\mu_1,\mu_2})$ is equal to $\mathfrak{so}_{n+2}\oplus\mathfrak{so}_2$. Notice that in this case,  $\mathfrak{m}_i\not\subset\mathfrak{s}$~by  Equations~\eqref{eq:nablastiefel0}, \eqref{eq:nablastiefel1}, and~\eqref{eq:nablastiefel2}, as it would imply that $\lambda=0$. Now consider the vector field $\eta:=c_1 E^*_{1,2} + c_2 V$, where $V$ is the Killing vector field induced by the $\mathfrak{so}_2$ ideal in the isometry group of  $(M,\langle\cdot,\cdot\rangle_{\lambda,\mu_1,\mu_2})$, that is, the $\mathsf{SO}_{n+2}$--invariant vector field such that $V_o=(E^*_{1,2})_o$. We now compute $c_1$ and $c_2$ so that $\eta$ is an infinitesimal transvection of $(M,\langle\cdot,\cdot\rangle_{\lambda,\mu_1,\mu_2})$. Notice that $\langle\nabla_{X^*} \eta, Y^*\rangle_{\lambda,\mu_1,\mu_2}=c_1\langle\nabla_{X^*} E^*_{1,2}, Y^*\rangle_{\lambda,\mu_1,\mu_2} + c_2 \langle D_{X^*} E_{1,2}, Y^*\rangle_{\lambda,\mu_1,\mu_2}$. By Equations~\eqref{eq:dtensorstiefel0}, \eqref{eq:dtensorstiefeln0}, \eqref{eq:nablastiefel1}, \eqref{eq:nablastiefel0}, \eqref{eq:nablastiefel1}, and~\eqref{eq:nablastiefel2}, this implies that $\eta$ is an infinitesimal transvection if and only if $c_1(2\mu_1-\lambda) -  c_2 \lambda=0$. Hence, if $\lambda=2\mu_1$, then we take $c_1=1$ and $c_2=0$, and $\eta$ is a transvection; and if $\lambda\neq2\mu_1$, we take $c_1=1$ and $c_2=\frac{2\mu_1-\lambda}{\lambda}$, so $\eta$ is also an infinitesimal transvection.\qedhere
	\end{proof}
	Now we consider the homogeneous fibrations induced by the triples \[ \textup{(1)} \enspace \mathsf{H}=\mathsf{SO}_n<\mathsf{K}=\mathsf{SO}_{n+1}<\mathsf{G}=\mathsf{SO}_{n+2}, \quad \textup{(2)} \enspace \mathsf{H}=\mathsf{SO}_n<\mathsf{K}=\mathsf{SO}_n\times\mathsf{SO}_2<\mathsf{G}=\mathsf{SO}_{n+2}.\]
	Observe that the first triple induces an $\mathsf{S}^n$--fibration, and the second an $\mathsf{S}^1$--fibration.
	We denote by $\langle\cdot,\cdot\rangle_{\lambda}$ and $\langle\langle\cdot,\cdot\rangle\rangle_{\lambda}$ the canonical variations with respect to the normal homogeneous metric of $T_1 \mathsf{S}^{n+1}=\mathsf{SO}_{n+2}/\mathsf{SO}_n$ associated with the homogeneous fibrations \textup{(1)} and \textup{(2)}, respectively. 
	
	In view of Theorem~\ref{th:indstiefel},  we see two very different behaviors. When we consider the $\mathsf{S}^1$--homogeneous fibration, all canonical variations have index of symmetry equal to $1$, while when we consider homogeneous fibrations with fiber $F$ of dimension bigger than one, we see that there is just one special canonical variation metric, namely the one with vertical deformation parameter $\lambda=2$, such that the leaf of symmetry coincides with $F$.
	This phenomenon holds more generally, see~Proposition~\ref{prop:s1bundlelow} and Proposition~\ref{prop:verticaltrans}.

	\subsection{The structure of the distribution of symmetry for $2$-isotropy irreducible homogeneous spaces}
	\label{subsec:structure2isotr}
	Throughout this subsection, we will focus on homogeneous spaces whose isotropy representation exhibits a particularly simple decomposition. Let $M=\mathsf{G}/\mathsf{H}$ be a Riemannian homogeneous space, and let $\mathfrak{g} = \mathfrak{h} \oplus \mathfrak{m}$ be a reductive decomposition. We say that $M$ is a \emph{$2$-isotropy irreducible homogeneous space} if the tangent space $\mathfrak{m} \cong T_o M$ decomposes into exactly two inequivalent irreducible submodules under the action of the connected component of the isotropy group $\mathsf{H}^{0}$ (or equivalently, under the adjoint action of the isotropy subalgebra $\mathfrak{h}$).
	
	We start this subsection by proving a lemma which gives a structural result for the Killing vector fields spanning the leaves of symmetry of an arbitrary Riemannian homogeneous space.
	\begin{lemma}
		\label{lemma:jacobisym}
		Let $M=\mathsf{G}/\mathsf{H}$ be a Riemannian homogeneous space with index of symmetry equal to $\ell$, and let $L(o)$ be its leaf of symmetry passing through the base point $o\in M$. Assume that $\gamma\colon \R\rightarrow L(o)$ is a geodesic of $L(o)$ such that $\gamma(0)=o$, $X\in\mathfrak{g}$ satisfies that $X^*_{\gamma(t)}\in T_{\gamma(t)} L(o)$, and $\{e_i\}^\ell_{i=1}$ is an orthonormal basis of $T_o L(o)$ diagonalizing the Jacobi operator $R_{\dot{\gamma}(0)}$ of $M$ restricted to $T_o L(o)$, chosen such that $e_1 = \dot{\gamma}(0)$, with corresponding eigenvalues $\{\alpha_i \}^\ell_{i=1}\subset\R$.
		
		Then, $X^*_{\gamma(t)}=\sum^\ell_{i=1} \lambda_i(t) e_i(t)$, where for each $i\in\{1,\ldots, \ell\}$, we have $e_i(t)=\mathcal{P}_{\gamma(t)} e_i$. Moreover, $\lambda_1(t) = \langle X^*_o, \dot{\gamma}(0)\rangle$ is constant, and for each $i\in\{2,\ldots, \ell\}$:
		\[\lambda_i(t)=\begin{cases} \frac{\langle D_{t_{\rvert t=0}}X^*, \, e_i\rangle}{\sqrt{\alpha_i}} \sin(\sqrt{\alpha_i}t) + \langle X^*_o,  e_i\rangle \cos(\sqrt{\alpha_i}t) & \text{when $\alpha_i>0$,}\\[2ex]
			\langle D_{t_{\rvert t=0}}X^*, e_i\rangle t +  \langle X^*_o, e_i\rangle & \text{when $\alpha_i=0$,}\\[2ex]
			\frac{\langle D_{t_{\rvert t=0}}X^*, \, e_i\rangle}{\sqrt{-\alpha_i}} \sinh(\sqrt{-\alpha_i}t) + \langle X^*_o, e_i\rangle \cosh(\sqrt{-\alpha_i}t) & \text{when $\alpha_i<0$.}
		\end{cases}  \]
	\end{lemma}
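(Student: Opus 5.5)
The plan is to restrict everything to the leaf $L(o)$ and exploit the fact that $L(o)$ is a totally geodesic, globally symmetric submanifold of $M$ (see~\cite[Section~3]{ORT}). Since $X^*$ is tangent to $L(o)$ along $\gamma$, and $L(o)$ is totally geodesic, the restriction of $X^*$ to $\gamma$ is a Jacobi field of $M$ along $\gamma$: this is the standard fact that Killing fields restricted to geodesics satisfy $D_t^2 X^* + R(X^*,\dot\gamma)\dot\gamma = 0$. Because $L(o)$ is totally geodesic, the parallel transport of $M$ along $\gamma$ preserves $TL(o)$ and agrees with the parallel transport of $L(o)$. Hence $e_i(t)=\mathcal{P}_{\gamma(t)}e_i$ is an orthonormal frame of $T_{\gamma(t)}L(o)$, and we may write $X^*_{\gamma(t)}=\sum_i\lambda_i(t)e_i(t)$, with no normal component.

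The key step is to show that the Jacobi equation decouples in this frame with constant coefficients. For this I will use that $L(o)$ is locally symmetric, so its curvature tensor is parallel. The curvature of $M$ restricted to tangent vectors of $L(o)$ coincides with the intrinsic curvature of $L(o)$, by Gauss' equation with vanishing second fundamental form. Thus the Jacobi operator $R_{\dot\gamma(t)}$ restricted to $T_{\gamma(t)}L(o)$ is the parallel transport of $R_{\dot\gamma(0)}|_{T_oL(o)}$. It is therefore diagonal in the frame $\{e_i(t)\}$ with the same constant eigenvalues $\alpha_i$, and in particular $\alpha_1=0$ since $e_1=\dot\gamma(0)$. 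Substituting into the Jacobi equation gives the scalar ODEs $\lambda_i''+\alpha_i\lambda_i=0$.

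Solving these ODEs with initial data $\lambda_i(0)=\langle X^*_o,e_i\rangle$ and $\lambda_i'(0)=\langle D_t|_{t=0}X^*,e_i\rangle$ produces the three displayed cases. For $i=1$ the solution is a priori linear. It is in fact constant, because for a Killing field $\langle X^*,\dot\gamma\rangle$ is constant along a geodesic: $\tfrac{d}{dt}\langle X^*,\dot\gamma\rangle=\langle\nabla_{\dot\gamma}X^*,\dot\gamma\rangle=0$ by skew-symmetry of $\nabla X^*$.

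The only delicate point I expect is justifying that the curvature of $L(o)$ is parallel along $\gamma$, i.e.\ that $L(o)$ is locally symmetric in its induced metric. This is precisely the structural result of~\cite{ORT}, which I will quote rather than reprove. Everything else is the routine Jacobi field computation described above.
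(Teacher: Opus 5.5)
Your proposal is correct and follows essentially the same route as the paper. You obtain the Jacobi equation $D_t^2X^*+R_{\dot\gamma}X^*=0$ along $\gamma$, use that $L(o)$ is a totally geodesic symmetric leaf to get a parallel orthonormal frame in which $R_{\dot\gamma(t)}$ is diagonal with constant eigenvalues $\alpha_i$, solve the decoupled ODEs $\lambda_i''+\alpha_i\lambda_i=0$, and use the constancy of $\langle X^*,\dot\gamma\rangle$ to conclude that $\lambda_1$ is constant. The differences are cosmetic: you apply directly the fact that a Killing field of $M$ restricts to a Jacobi field along the $M$-geodesic $\gamma$, and you spell out the Gauss-equation and parallel-curvature justification, whereas the paper first notes that the tangential part of $X^*$ is a Killing field of $L(o)$.
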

	\begin{proof}
		Let $X\in\mathfrak{g}$ and consider the induced Killing vector field $X^*$. As $X^*$ is a Killing vector field and $L(o)$ is a totally geodesic submanifold of $M$, we have $0=\langle \nabla_Y X^*, Y\rangle = \langle \nabla_Y (X^*)^{TL(o)}, Y\rangle$ for each $Y\in\mathfrak{X}(L(o))$, where $(X^*)^{TL(o)}$ is the tangential projection of the restriction of $X^*$ to $L(o)$. Thus, $(X^*)^{TL(o)}$ is a Killing vector field of $L(o)$. In particular, $(X^*)^{TL(o)}$ is a Jacobi vector field along the geodesic $\gamma$. Hence, as $X^*_{\gamma(t)}\in T_{\gamma(t)}L(o)$, we have  
		\begin{equation}
			\label{eq:jacobieq}
			D^2_t X^* + R_{\dot{\gamma}(t)} X^{*}=0.
		\end{equation}
		On the other hand, as $X^*$ is a Killing vector field and $\gamma$ is a geodesic, the inner product $\langle X^*_{\gamma(t)}, \dot{\gamma}(t) \rangle$ is constant. Thus, the component of $X^*$ along $e_1(t) = \dot{\gamma}(t)$ is simply $\lambda_1(t) = \langle X^*_o, \dot{\gamma}(0)\rangle$.
		
		Let $\{e_i(t)\}^\ell_{i=1}$ be a parallel frame along $\gamma$ such that $\{e_i(0)\}^\ell_{i=1}$ is a family of orthonormal vectors such that $R_{\dot{\gamma}(0)}e_i(0)=\alpha_i e_i(0)$ where $\alpha_i\in\R$, for each $i\in\{1,\ldots,\ell\}$. Then, as $L(o)$ is symmetric and totally geodesic, we have that $R_{\dot{\gamma}(t)}e_i(t)=\alpha_{i} e_i(t)$ for each $i\in\{1,\ldots,\ell\}$ and $t\in\R$. Hence, if we write $X^{*}_{\gamma(t)}=\sum_{i=1}^\ell \lambda_i(t) e_i(t)$, and we plug it in Equation~\eqref{eq:jacobieq}, we get $0=\lambda''_i(t) + \alpha_i \lambda_i(t)$ for each $i\in\{2,\ldots,\ell\}$. The solution to this system of linear ODEs, together with the constant component $\lambda_1(t)$, yields the desired result.
	\end{proof}
	Let us recall the concept of the \emph{nullity subspace} of a Riemannian manifold $ M $. This is defined as  
	\[
	\nu_{p} = \{v \in T_{p}M \colon R(x,y)v = 0\enspace \text{for every $x,y\in T_p M$}\}.
	\]  
	In the case of a Riemannian homogeneous manifold $ M = \mathsf{G}/\mathsf{H} $, since the curvature tensor $ R $ is $\s G $--invariant, the so-called \emph{nullity distribution} $ \nu = \bigcup_{p \in M} \nu_{p} $ is an autoparallel and flat distribution on $ M $. For more information on this distribution in homogeneous spaces, see~\cite{DOV}.
	
	Now we are in a position to prove the following structural result for the distribution of symmetry of a homogeneous space whose isotropy representation splits into the direct sum of two inequivalent irreducible submodules.
	\begin{theorem}\label{th:2-isotropy_fibration}
		Let $M = \mathsf{G}/\mathsf{H}$ be a compact 2-isotropy irreducible homogeneous space, equipped with an arbitrary $\mathsf{G}$--invariant non-symmetric metric. Moreover, let $\mathcal{D}_{1}$ and $\mathcal{D}_2$ be the $\mathsf{G}$--invariant distributions induced by the inequivalent $\mathsf{H}$-modules $\mathfrak{m}_1$ and $\mathfrak{m}_2$. Then, if $\mathfrak{s}$ is non-trivial we have:
		\begin{itemize}
			\item[\textrm{(i)}] $\mathfrak{s}=\mathcal{D}_{1}$ or $\mathfrak{s}=\mathcal{D}_{2}$. 
			\item[(ii)] The integral manifolds of $\mathfrak{s}$ are compact.
			\item[(iii)] Let $L(o)$ be the leaf of symmetry passing through $o$ and let
			\[
			\s K=\{g \in \mathsf{G}: g \cdot L(o) =L(o)\},
			\]
			that is, the stabilizer of $L(o)$ in $\mathsf{G}$. Then, $\mathsf{K}$ is a compact Lie subgroup of $\mathsf{G}$, and we have the homogeneous fibration
			\begin{equation*}\label{eq:HFibration}
				\mathsf{K}/\mathsf{H} \to M = \mathsf{G}/\mathsf{H} \overset{\pi}{\to} \mathsf{G}/\mathsf{K},
			\end{equation*}
			where $\mathsf{K}/\mathsf{H}$ is an isotropy irreducible symmetric space such that $T_{o}(\mathsf{K}/\mathsf{H})\cong\mathfrak{s}_{o}$, and the vertical distribution coincides with the distribution of symmetry. Moreover, for each $\mathsf{G}$--invariant metric on $M$, there is a unique $\mathsf{G}$--invariant metric on $\mathsf{G}/\mathsf{K}$ such that $\pi$ is a Riemannian submersion.
		\end{itemize}
	\end{theorem}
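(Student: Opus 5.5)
Part (i) is immediate from the setup. The distribution of symmetry is $\mathsf{G}$--invariant, so $\mathfrak{s}_o\subset\mathfrak{m}$ is an $\Ad(\mathsf{H}^0)$--invariant subspace. Because $\mathfrak{m}_1$ and $\mathfrak{m}_2$ are inequivalent and irreducible, the only invariant subspaces are $0$, $\mathfrak{m}_1$, $\mathfrak{m}_2$ and $\mathfrak{m}$. The case $\mathfrak{s}_o=\mathfrak{m}$ would make the (complete, homogeneous) metric symmetric, which is excluded. Since $\mathfrak{s}$ is assumed non-trivial, $\mathfrak{s}=\mathcal{D}_1$ or $\mathfrak{s}=\mathcal{D}_2$. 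Say $\mathfrak{s}=\mathcal{D}_1$.

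For (ii) and (iii), I would let $\mathsf{K}^0$ be the connected (a priori non-closed) subgroup of $\mathsf{G}$ with Lie algebra
\[
\mathfrak{k}:=\{X\in\mathfrak{g}\colon X^*_q\in\mathfrak{s}_q \text{ for all } q\in L(o)\}.
\]
By \cite[Section~3]{ORT}, $\mathsf{K}^0$ acts transitively on $L(o)$ and $\mathsf{H}\cdot L(o)=L(o)$. The key step is to show $\mathfrak{k}=\mathfrak{h}\oplus\mathfrak{m}_1$, i.e.\ that $\mathfrak{h}\oplus\mathfrak{m}_1$ is a subalgebra. Then $\mathsf{K}^0$ is the connected subgroup with Lie algebra $\mathfrak{h}\oplus\mathfrak{m}_1$, and $L(o)=\mathsf{K}^0\cdot o$.

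To show closedness, I would argue as follows. The closure $\overline{\mathsf{K}^0}$ is a compact connected subgroup whose Lie algebra $\bar{\mathfrak{k}}$ is $\Ad(\mathsf{H})$--invariant and contains $\mathfrak{h}\oplus\mathfrak{m}_1$. Hence $\bar{\mathfrak{k}}$ is either $\mathfrak{h}\oplus\mathfrak{m}_1$ or all of $\mathfrak{g}$. In the second case $L(o)$ would be dense in $M$. I would rule this out with Lemma~\ref{lemma:jacobisym}: the Killing fields $X^*$ with $X\in\mathfrak{m}_1$ restrict to $L(o)$ as combinations of trigonometric, linear, or hyperbolic functions along geodesics, and these must stay bounded since $M$ is compact. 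Thus a negative Jacobi eigenvalue, or a non-constant linear term, cannot occur, which rules out hyperbolic and flat non-compact factors in $L(o)$. A Euclidean factor would lie in the nullity, and I would exclude it separately, using that $\mathfrak{m}_1$ is irreducible and that the metric is not symmetric. It follows that $L(o)$ is a complete, totally geodesic symmetric space of compact type, and therefore compact. This gives (ii), since a compact leaf cannot be dense in a manifold of larger dimension.

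Compactness of $L(o)$ then shows that $\mathsf{K}$ is closed, as in Remark~\ref{rem:closedleaf}. We have $\mathsf{H}<\mathsf{K}$ and $\mathfrak{k}=\mathfrak{h}\oplus\mathfrak{m}_1$, which yields the homogeneous fibration $\mathsf{K}/\mathsf{H}\to\mathsf{G}/\mathsf{H}\to\mathsf{G}/\mathsf{K}$. Its vertical space at $o$ is $\mathfrak{m}_1=\mathfrak{s}_o$, and its fibers are the leaves $g\cdot L(o)$. The fiber $\mathsf{K}/\mathsf{H}=L(o)$ is symmetric, and it is isotropy irreducible because $\mathfrak{m}_1$ is $\mathsf{H}$--irreducible. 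For the Riemannian submersion statement, $\mathfrak{m}_2\cong T_{\pi(o)}(\mathsf{G}/\mathsf{K})$, and the metric on $\mathfrak{m}_2$ is $\Ad(\mathsf{H})$--invariant. To descend it to $\mathsf{G}/\mathsf{K}$ it must be $\Ad(\mathsf{K})$--invariant. Invariance under $\Ad(\mathsf{K}^0)$ amounts to $\ad(\mathfrak{m}_1)|_{\mathfrak{m}_2}$ being skew. I would derive this from $\mathcal{D}_1$ being totally geodesic (being the leaves of $\mathfrak{s}$) and $\mathsf{G}$--invariant: the totally geodesic condition $\langle\nabla_{X^*}Y^*,Z^*\rangle=0$ for $X,Y\in\mathfrak{m}_1$, $Z\in\mathfrak{m}_2$, combined with \eqref{eq:LCconnectionkillingind}, forces $\langle[Z,X]_{\mathfrak{m}},Y\rangle$-type terms to vanish, and polarizing this should give the required skewness. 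Uniqueness is clear, since $d\pi|_{\mathfrak{m}_2}$ is an isomorphism onto the tangent space of the base.

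\textbf{Main obstacle.} The key step above is hard: one must show that $[\mathfrak{m}_1,\mathfrak{m}_1]\subset\mathfrak{h}\oplus\mathfrak{m}_1$ from the fact that the fields $X^*$, $X\in\mathfrak{m}_1$, are tangent to $L(o)$ only at $o$, not along the whole leaf. I would first try \eqref{eq:bracketkilling}. The infinitesimal transvections at $o$ with values in $\mathfrak{m}_1$ form a space $\mathfrak{p}$. The bracket of two such transvections $X,Y$ has value $0$ at $o$ and covariant derivative $R_o(X_o,Y_o)$, which preserves $T_oL(o)$ because $L(o)$ is totally geodesic. So $\mathfrak{p}+[\mathfrak{p},\mathfrak{p}]$ is a Lie algebra of Killing fields tangent to $L(o)$. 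I would then identify its $\mathfrak{g}$-part, projected modulo $\mathfrak{h}$, with $\mathfrak{m}_1$, using $\Ad(\mathsf{H})$--invariance and inequivalence of $\mathfrak{m}_1$ and $\mathfrak{m}_2$. A subtlety I expect to handle by hand is that transvections may come from $\mathrm{Isom}(M)\supsetneq\mathsf{G}$.
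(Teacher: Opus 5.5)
Your part (i) and your plan for (ii) follow the paper: you bound Killing fields along geodesics of $L(o)$ using Lemma~\ref{lemma:jacobisym} to exclude non-compact and flat factors. As written, though, the proposal has genuine gaps.

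\textbf{Compactness when $\dim\mathfrak{m}_1=1$.} Here $\mathsf{H}^0$ acts trivially on $\mathfrak{m}_1$ and $L(o)$ is a geodesic, hence flat. Lemma~\ref{lemma:jacobisym} then only gives the constant coefficient $\lambda_1$. Your conclusion that $L(o)$ is a symmetric space of compact type is false in this case (the circle leaves of Theorem~B are exactly this situation), and a priori the geodesic could be dense. You need the separate argument the paper uses: $L(o)$ is the component through $o$ of the fixed point set of $\mathsf{H}^0$, which is closed. Equivalently, $\mathfrak{m}_1\subset\mathsf{Z}_{\mathfrak{g}}(\mathfrak{h})$, so $L(o)$ is an orbit of the compact group $\mathsf{Z}_{\mathsf{G}}(\mathsf{H}^0)^0$.

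\textbf{Flat factors when $\dim\mathfrak{m}_1\ge2$.} You only announce this exclusion; the argument is as follows. The nullity of $L(o)$ at $o$ is $\mathsf{H}^0$--invariant, hence $0$ or all of $\mathfrak{m}_1$. In the latter case take $X\in\mathfrak{h}$ acting non-trivially on $\mathfrak{m}_1$. Then $X^*_o=0$ but $(\nabla_vX^*)_o\neq0$ for some $v\in T_oL(o)$ by \eqref{eq:magicolmos}. Since $\mathsf{H}$ preserves $L(o)$, $X^*$ is tangent to $L(o)$, so it grows linearly along the flat geodesic $\gamma_v$, contradicting compactness of $M$.

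\textbf{The step you call the main obstacle is immediate.} Your detour through $\mathfrak{p}+[\mathfrak{p},\mathfrak{p}]$, whose elements need not lie in $\mathfrak{g}$, is unnecessary and left unfinished. The distribution of symmetry is isometry-invariant, so $g\cdot L(o)=L(g\cdot o)$. Hence the stabilizer $\mathsf{K}$ contains $\mathsf{H}$ and acts transitively on $L(o)$. Once $L(o)$ is compact, $\mathsf{K}$ is closed and $L(o)=\mathsf{K}\cdot o$. The Lie algebra of $\mathsf{K}$ contains $\mathfrak{h}$ and projects onto $T_oL(o)=\mathfrak{m}_1$, so it equals $\mathfrak{h}\oplus\mathfrak{m}_1$. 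The same one-line argument works for your $\mathfrak{k}$, given the transitivity of $\mathsf{K}^0$ you already quote. Alternatively, the pull-back of the integrable $\mathsf{G}$--invariant distribution $\mathcal{D}_1$ to $\mathsf{G}$ is the left-invariant distribution determined by $\mathfrak{h}\oplus\mathfrak{m}_1$, and Frobenius forces this to be a subalgebra. This also makes your closure/density discussion superfluous, since every leaf is a $\mathsf{G}$--translate of $L(o)$.

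\textbf{The Riemannian submersion mechanism fails.} With $\mathfrak{m}_2=\mathfrak{k}^{\perp_b}$ one has $[\mathfrak{k},\mathfrak{m}_2]\subset\mathfrak{m}_2$. So the totally geodesic condition $\langle\nabla_{X^*}Y^*,Z^*\rangle_o=0$ for $X,Y\in\mathfrak{m}_1$, $Z\in\mathfrak{m}_2$ holds identically and carries no information. Skewness of $\ad(X)|_{\mathfrak{m}_2}$ is instead the condition $\langle U(\mathfrak{m}_2,\mathfrak{m}_2),\mathfrak{m}_1\rangle=0$, which concerns the horizontal distribution, not the fibers. The correct reason, and the paper's, is Schur's lemma. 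Self-adjoint $\mathsf{H}^0$--intertwiners of the inequivalent irreducible modules are real multiples of the identity, so every $\mathsf{G}$--invariant metric has the form $\alpha_1b|_{\mathfrak{m}_1}+\alpha_2b|_{\mathfrak{m}_2}$. Then $\alpha_2b|_{\mathfrak{m}_2}$ is $\Ad(\mathsf{K})$--invariant for all of $\mathsf{K}$, not only $\mathsf{K}^0$, because $b$ is $\Ad(\mathsf{G})$--invariant.
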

	\begin{proof}
		Let us start by proving \textrm{(i)}.  As we observed previously, $ \mathfrak{s} $ is invariant under the isotropy action, see~Equation~\eqref{eq:sym_isotropy}. Therefore, as $M$ is non-symmetric,  $\mathfrak{s}\neq0$, and  $\mathfrak{m}_1$ and $\mathfrak{m}_2$ are inequivalent irreducible $\mathsf{H}^{0}$-modules, we have $ \mathfrak{s} = \mathcal{D}_1 $ or $ \mathfrak{s} = \mathcal{D}_2 $. 
		
		Let us prove \textrm{(ii)}. We can assume without loss of generality, that $\mathfrak{s}=\mathcal{D}_{1}$. We will distinguish two cases depending on the rank  of the distribution $\mathcal{D}_{1}$.
		
		If $\rank(\mathcal{D}_{1})=1$, then $\mathsf{H}^0$ acts trivially on $\mathfrak{m}_{1}\cong(\mathcal{D}_{1})_{o}$. This implies that $L(o)$ corresponds to the connected component containing $o$ of the set of fixed points $M^{\mathsf{H}} = \{p \in M : h\cdot p = p \text{ for all } h \in \mathsf{H}\}$. This set is closed and, since it is contained in a compact space, it must also be compact.
		
		Now let us assume that $\dim(\mathcal{D}_{1})>1$. Let $L(o) = L_{0} \times \ldots \times L_{k}$ be the (local) de Rham decomposition of the leaf of symmetry passing through $o$, where $L_{0}$ is the flat factor of the leaf. As $L(o)$ is a totally geodesic submanifold of $M$, the nullity distribution $\nu$ of $L(o)$ at $o$ is equal to  $T_o L_0$. Moreover, for each $v\in T_o L_0$, using that the curvature tensor is $\mathsf{H}$--invariant, we have 
		\[R(x,y)hv=h R(h^{-1} x,h^{-1}y)v =0 \quad \text{for all $h\in \mathsf{H}$, and $x,y\in T_o L(o)$}.  \]
		Thus,  $T_{o}L_{0}$ is $\mathsf{H}^0$--invariant. Hence, by the irreducibility of $T_o L(o)\cong \mathfrak{m}_1$, we have that $T_{o}L_{0}$ is either trivial or coincides with $T_{o}L(o)$. Let us show that the latter cannot occur. If that were the case, we would have $T_{o}L_{0}=\mathfrak{s}_{o}$. Consider a non-zero $X \in \mathfrak{h}$ such that $\Exp(t X)\in \mathsf{H}^0$ does not act trivially on $T_oL_0$, since the $\mathsf{H}^0$--action on $T_{o} L_0$ does not have fixed vectors. Then, as $X^*_o=0$, by Equation~\eqref{eq:magicolmos}, we have
		\begin{equation}
			\label{eq:isotmagicolmos}
			e^{t(\nabla X^*)_{o}}v=d_o\phi_t(v)= d_o\Exp(t X) v. 
		\end{equation}
		Hence, we can find $v \in T_{o}L_{0}$ not fixed by $d_o\Exp(t X)$. Therefore, by Equation~\eqref{eq:isotmagicolmos}, we deduce that $(\nabla_{v} X^{*})_{o} \neq 0$. Now, if we consider the geodesic $\gamma_v(t)=\exp(tv)$, and by Lemma~\ref{lemma:jacobisym}, we have that 
		\[||X^*_{\gamma(t)}||^2=t^2 || D_{t_{\rvert t=0}}X^*_{\gamma(t)}||^2= t^2 || \nabla_{v} X^{*}_{o}||^2.\]
		However, this implies that the vector field $X^{*}$ along the geodesic $\gamma$ of $L_{0}$ has not bounded length contradicting the compactness of  $M$. Hence, the local de Rham decomposition of $L(o)$ cannot have local flat factors.
		
		Now, suppose that there exists some $ j $ such that $ L_j $ is of non-compact type.  Let $ v \in T_o L_j $, and let $ X \in \mathfrak{s} $ be a Killing field in $ L_j $ such that $ X_o = v $. We can choose $w$ such that the associated Jacobi operator has at least one strictly negative eigenvalue $\alpha_ {i} < 0$ corresponding to a direction $e_i$ where $\langle v, e_i \rangle \neq 0$. Since $ L_j $ is totally geodesic in $M$, the geodesic $ \gamma_w(t) = \exp(tw) $ remains in $ L_j $ for all $ t \in \R $. Now, since $X_{\gamma(t)} \in T_{\gamma(t)} L_j$, by Lemma~\ref{lemma:jacobisym}, we have that $X_{\gamma(t)} = \sum_{i=1}^{\ell_j} \lambda_i(t) e_i(t)$, where $e_i(t)$ is a parallel orthonormal frame along $\gamma_w$, and $\ell_j$ denotes the dimension of $L_j$. Since $L_j$ is a symmetric space of non-compact type, there is some $i\in\{1,\ldots, \ell_j\}$ and $\alpha_i<0$ such that $\lambda_i(t)$ satisfies 
		\begin{align*}
			\lambda_i(t)&=\frac{\langle D_{t_{\rvert t=0}}X^*, e_i\rangle}{\sqrt{-\alpha_i}} \sinh(\sqrt{-\alpha_i}t) + \langle X^*_o, e_i\rangle \cosh(\sqrt{-\alpha_i}t)=\langle X^*_o, e_i\rangle \cosh(\sqrt{-\alpha_i}t),
		\end{align*}
		where we have used the fact that $X\in\mathfrak{s}$ is an infinitesimal transvection  at $o\in M$. Now as the length of $X$ along $\gamma_w$ must be bounded we deduce that $\lambda_i(t)=\langle X^*_o, e_i\rangle=0$. However, this contradicts the fact that $L_j$ is contained in the leaf of symmetry, as there is no element in $\mathfrak{s}$ projecting non-trivially on $e_i\in T_o L_j$. Consequently, the de Rham decomposition of $L(o)$ can only have compact-type factors, and therefore it is compact, proving \textup{(ii)}. 
		
		Let us prove \textup{(iii)}. First, $\mathsf{K}$ is clearly an abstract subgroup of $\mathsf{G}$. Moreover, it is compact since the integral leaves of $\mathfrak{s}$ are compact by \textup{(ii)}, hence it is a closed Lie subgroup of $\mathsf{G}$. In addition, $\mathsf{K}$ acts transitively on $L(o)$, since $\mathsf{G}$ does and $\mathsf{K}$ is the maximal subgroup of $\mathsf{G}$ leaving $L(o)$ invariant. Therefore, $L(o)\cong \mathsf{K}/\mathsf{H}$. The map 
		\[
		\pi \colon \mathsf{G}/\mathsf{H} \longrightarrow \mathsf{G}/\mathsf{K}, \quad g\mathsf{H} \longmapsto g\mathsf{K},
		\]
		is clearly a smooth surjective submersion, whose fibers (diffeomorphic to $\mathsf{K}/\mathsf{H} \cong L(o)$) correspond to the integral leaves of the foliation defined by $\mathfrak{s}$.
		
		Let us now show that $\mathsf{K}/\mathsf{H}$ is an isotropy irreducible symmetric space. First, we have 
		\[
		T_{e\mathsf{H}}(\mathsf{K}/\mathsf{H}) \cong T_{o}\bigl(\pi^{-1}(e\mathsf{K})\bigr) \cong \mathfrak{s}_{o}.
		\]
		Since the $\mathsf{G}$--invariant metrics in this setting correspond to canonical variations of normal homogeneous metrics, it follows from Lemma~\ref{lemma:tghomfib} that the fibers are totally geodesic. Hence, the Levi-Civita connection of $M=\mathsf{G}/\mathsf{H}$ restricted to the fibers coincides with the Levi-Civita connection of the fibers themselves. Consequently, the vectors in $\mathfrak{s}_{0}$ act as transvections of $\pi^{-1}(e\mathsf{K}) \cong L(o)$ generating $T_{o}(\pi^{-1}(e\mathsf{K}))$, so $\mathsf{K}/\mathsf{H}$ is a symmetric space with respect to the restriction of the elements of $\mathsf{G}$ fixing $L(o)$, and hence with respect to their connected component $\mathsf{K}$. Furthermore, since $\mathfrak{s}_{o} \cong T_{e\mathsf{H}}(\mathsf{K}/\mathsf{H})$ is $\mathsf{H}^0$--irreducible, it follows that $\mathsf{K}/\mathsf{H}$ is an isotropy irreducible symmetric space.
		
		Finally, observe that since there are no equivalent isotropy submodules, the only $\mathsf{G}$--invariant metrics on $\mathsf{G}/\mathsf{H}$ are those arising from the canonical variation of a normal homogeneous metric on $\mathsf{G}/\mathsf{H}$, see Equation~\eqref{eq:canvar}. Thus, for each $\mathsf{G}$--invariant metric on $\mathsf{G}/\mathsf{H}$, there exists a unique $\mathsf{G}$--invariant metric on $\mathsf{G}/\mathsf{K}$ making $\pi$ into a Riemannian submersion. This completes the proof of \textup{(iii)}.\qedhere
	\end{proof}
	
	\subsection{The index of symmetry of $\mathsf{S}^1$--homogeneous fibrations}
	\label{subsec:s1fib}
	We start the section proving the following result which focuses on the case in which the fiber of a homogeneous fibration has dimension one.
	
	\begin{proposition}
		\label{prop:s1bundlelow}
		Let us assume that $M=\mathsf{G}/\mathsf{H}$, with $\mathsf{G}$ simple, is the total space of a homogeneous fibration induced by the triple of compact Lie groups $\mathsf{H}<\mathsf{K}<\mathsf{G}$ with $\dim\mathsf{K}/\mathsf{H}=1$. Consider the Riemannian homogeneous metric  $\langle\cdot,\cdot\rangle_{\lambda}$ on $M=\mathsf{G}/\mathsf{H}$  defined by Equation~\eqref{eq:lambdametric} and assume that it is not symmetric or isometric to a Riemannian product metric.	Then, if $\mathfrak{m}_2$ is an irreducible $\mathsf{H}$-module, we have
		\[i_{\mathfrak{s}}(\mathsf{G}/\mathsf{H},\langle\cdot,\cdot\rangle_{\lambda})= 1 \quad \text{for every $\lambda>0$}.\]  
	\end{proposition}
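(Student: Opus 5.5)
The plan is to show that the distribution of symmetry at $o$ is exactly $\mathfrak{m}_1=\mathfrak{k}\ominus\mathfrak{h}$. I will get the upper bound from Theorem~\ref{th:2-isotropy_fibration} and the lower bound by writing down an explicit infinitesimal transvection.

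\emph{Setup and upper bound.} Since $\dim\mathsf{K}/\mathsf{H}=1$, we have $\mathfrak{m}_1=\R Z$ for some $Z\in\mathfrak{k}$. Since $\mathsf{H}^0$ acts on a line through a compact group of isometries, it acts trivially, so $[\mathfrak{h},Z]=0$. Hence $Z$ is central in $\mathfrak{k}=\mathfrak{h}\oplus\R Z$. The module $\mathfrak{m}_2$ is irreducible by hypothesis, and it is not trivial. Otherwise $\mathfrak{m}$ would be a trivial $\mathfrak{h}$-module, so $\mathfrak{h}$ would be an ideal of $\mathfrak{g}=\mathfrak{h}\oplus\mathfrak{m}$, which is impossible for $\mathsf{G}$ simple acting almost effectively. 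Thus $\mathfrak{m}_1,\mathfrak{m}_2$ are inequivalent irreducible modules and $M$ is $2$-isotropy irreducible.

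Since $\langle\cdot,\cdot\rangle_\lambda$ is non-symmetric, $\mathfrak{s}_o\neq\mathfrak{m}$. By Theorem~\ref{th:2-isotropy_fibration}(i), if $\mathfrak{s}$ is nontrivial it equals $\mathcal{D}_1$ or $\mathcal{D}_2$. The case $\mathfrak{s}=\mathcal{D}_2$ would give coindex of symmetry one, which is excluded by \cite[Theorem~5.3]{BOR}; the non-product hypothesis is what makes this applicable. It therefore suffices to exhibit one infinitesimal transvection at $o$ with value $Z^*_o\neq0$; this forces $\mathfrak{s}=\mathcal{D}_1$ and $i_{\mathfrak{s}}=1$.

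\emph{Two Killing fields with value $Z^*_o$.} The first is $Z^*$ itself. The second is the $\mathsf{G}$--invariant vector field $V$ with $V_o=Z^*_o$, which is well defined because $Z$ is $\Ad(\mathsf{H}^0)$-fixed; if $\mathsf{H}$ is disconnected I would pass to the cover or argue with $\mathsf{H}^0$. The field $V$ is generated by the right action $g\mathsf{H}\mapsto g\Exp(tZ)\mathsf{H}$, which is well defined since $\Exp(tZ)$ normalizes $\mathsf{H}$. This action is isometric because $\Ad(\Exp tZ)$ preserves $b$, $\mathfrak{k}$ and $\mathfrak{k}^\perp$, and hence preserves $\langle\cdot,\cdot\rangle_\lambda$. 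So $V$ is Killing.

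\emph{Computing the two covariant derivatives at $o$.} For $V$, which is $\nabla^c$-parallel, we have $(\nabla_{X^*}V)_o=D_XZ$. For $Z^*$ we use \eqref{eq:LCconnectionkillingind}. Take $X,Y\in\mathfrak{m}_2$, and note that $[X,Z]\in\mathfrak{m}_2$. From \eqref{eq:U-tensor}, $2\langle U(X,Z),Y\rangle_\lambda=(\lambda-1)\,b([X,Z],Y)$. This gives
\[
\langle\nabla_{X^*}Z^*,Y^*\rangle_o=\bigl(\tfrac{\lambda}{2}-1\bigr)b([X,Z],Y),\qquad
\langle D_XZ,Y\rangle_\lambda=\tfrac{\lambda}{2}\,b([X,Z],Y).
\]
The remaining components vanish. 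Components involving $Z$ in two slots vanish by skew-symmetry of the derivative of a Killing field and $\dim\mathfrak{m}_1=1$. Mixed components with one slot in $\mathfrak{m}_1$ and one in $\mathfrak{m}_2$ vanish by $\mathfrak{h}$-equivariance, since $\mathfrak{m}_1$ is trivial and $\mathfrak{m}_2$ is nontrivial irreducible.

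Both skew operators are therefore multiples of $\ad_Z|_{\mathfrak{m}_2}$, with coefficients $\tfrac{\lambda}{2}-1$ and $\tfrac{\lambda}{2}$. These two coefficients differ by $1$, so they are never both zero and never equal. Hence there are $a,b$ with $a(\tfrac{\lambda}{2}-1)+b\tfrac{\lambda}{2}=0$ and $a+b\neq0$: take $a=\lambda$ and $b=2-\lambda$, for which $a+b=2$. Then $aZ^*+bV$ is an infinitesimal transvection at $o$ with value $2Z^*_o\neq0$.

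\emph{Expected main obstacle.} The delicate step is the bookkeeping of the vanishing components and of signs in the $U$-tensor computation. A secondary point is checking that $\ad_Z|_{\mathfrak{m}_2}\neq0$ (from simplicity of $\mathfrak{g}$), so that $V$ is genuinely a different Killing field from $Z^*$. Even if $\ad_Z|_{\mathfrak{m}_2}$ were zero, $Z^*$ alone would already be a transvection, so the conclusion holds in either case.
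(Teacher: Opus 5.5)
Your proof is correct, but it produces the transvection by a different route from the paper.

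\textbf{The paper's route.} The paper never computes $(\nabla Z^*)_o$ or $(\nabla V)_o$. It shows instead that both are skew, $\mathsf{H}$--equivariant endomorphisms of $\mathfrak{m}_2$. They commute, by $[Z^*,V]=0$, $R_o(Z^*_o,V_o)=0$ and \eqref{eq:bracketkilling}, and $(\nabla V)_o\neq0$. Schur's lemma then finishes: $\mathrm{End}_{\mathsf{H}}(\mathfrak{m}_2)\cong\R,\C$ or $\H$, and skew elements are imaginary. The real case is excluded, and commuting imaginary elements are real multiples of one another. Hence $(\nabla Z^*)_o=a(\nabla V)_o$, and $Z^*-aV$ is the transvection.

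\textbf{Your route.} You replace this with an explicit $U$-tensor computation, which checks out:
\[
\langle\nabla_{X^*}Z^*,Y^*\rangle_o=\bigl(\tfrac{\lambda}{2}-1\bigr)b([X,Z],Y), \qquad \langle D_XZ,Y\rangle_\lambda=\tfrac{\lambda}{2}\,b([X,Z],Y).
\]
The first agrees with \eqref{eq:LClambda}. The mixed components vanish directly, since $[\mathfrak{m}_2,Z]\subset\mathfrak{m}_2$, so you do not even need the equivariance argument.

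\textbf{What each buys.} Your approach gives the explicit transvection $\lambda Z^*+(2-\lambda)V$. It shows transparently why $Z^*$ alone works exactly at $\lambda=2$, consistent with Proposition~\ref{prop:verticaltrans}. It also recovers the relation $c_1(2\mu_1-\lambda)=c_2\lambda$ found in Theorem~\ref{th:indstiefel} for $T_1\mathsf{S}^{n+1}$, where the horizontal module is reducible. Your lower bound $i_{\mathfrak{s}}\ge1$ needs neither irreducibility of $\mathfrak{m}_2$ nor $(\nabla V)_o\neq0$. Irreducibility enters only in the upper bound, through Theorem~\ref{th:2-isotropy_fibration}. The paper's argument is more structural: proportionality comes for free from the division-algebra structure of the intertwiners, with no computation.

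\textbf{Two minor points.} The appeal to \cite[Theorem~5.3]{BOR} is redundant. Once $Z^*_o\in\mathfrak{s}_o$, the dichotomy in Theorem~\ref{th:2-isotropy_fibration}(i), together with non-symmetry, already forces $\mathfrak{s}=\mathcal{D}_1$. Your caveat about disconnected $\mathsf{H}$ (needed for $V$ to be well defined) applies equally to the paper's $\mathsf{G}$--invariant field $V$, so it is not a weakness specific to your argument.
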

	\begin{proof}
		First of all, observe that $\mathfrak{m}_1$ is one-dimensional, and $\mathfrak{m}_2$ is irreducible. Thus, we have that $\mathcal{V}_o \cong \mathfrak{m}_1$ coincides with the set of fixed vectors of the action by $\mathsf{H}$. By Lemma~\ref{lemma:tghomfib}, the vertical distribution $\mathcal{V}$ is autoparallel, and hence integrable into a totally geodesic submanifold $L(o)$ of $M$ passing through $o \in M$. In fact, the integral manifold $L(o)$ of $\mathcal{V}$ coincides with the connected component through $o$ of the fixed point set in $M$ of the isotropy group $\mathsf{H}$. Note that $L(o) \cong \s{S}^1$, since it is an orbit of the compact Lie group $\mathsf{Z}_{\mathsf{G}}(\mathsf{H}):=\{g\in\mathsf{G}:gh=hg\}$, see~\cite[Lemma~A.1]{puttmann}.
		
		Now we will consider two vector fields whose value at $o$ spans the tangent space of the fiber $L(o)$. Firstly, let $V$ be the $\mathsf{G}$--invariant vector field on $M$ induced by a vector $v$ such that $\mathcal{V}_p = \mathbb{R}\, v$. In this way, the integral curves of $V$ parametrize the integral manifolds of $\mathcal{V}$. 
		Secondly, let $v^*$ be the Killing vector field induced by $v\in\mathfrak{m}_1$. We claim that $V$ and $v^*$ satisfy the following properties:
		\begin{enumerate}
			\item[\textup{(i)}] $V$ and $v^*$ are Killing vector fields with respect to every metric $\langle\cdot,\cdot\rangle_{\lambda}$.
			
			\item[\textup{(ii)}] $V_o$ and $v_o^*$ are $\mathsf{H}$--invariant vectors.
			
			\item[\textup{(iii)}] $(\nabla V)_o$ and $(\nabla v^*)_o$  are skew-symmetric $\mathsf{H}$--invariant linear endomorphisms of $\mathfrak{m}_2$.
			
			\item[\textup{(iv)}] $\nabla_v v^* = \nabla_v V = 0$.
			
			\item[\textup{(v)}] $[v^*, V] = 0$.
			
			\item[\textup{(vi)}] $[(\nabla V)_o, (\nabla v^*)_o]=0$.
			
			\item[\textup{(vii)}] $(\nabla V)_o \ne 0$.
		\end{enumerate}

		In order to prove $\textup{(i)}$, we just need to check that $V$ is a Killing vector field  with respect to every metric $\langle\cdot,\cdot\rangle_{\lambda}$. As a consequence of $\mathsf{G}$-invariance and Equation~\eqref{eq:LCconnectionkillingindarb}, we deduce that $V$ is a Killing field on $M$ with respect to the normal homogeneous metric, that is, with respect to $\langle\cdot,\cdot\rangle_1$, and thus the flow $\phi^V_t$ of $V$ is by isometries. Since the flow $\phi^V_t$ preserves the vertical distribution $\mathcal{V}$, it must also preserve the horizontal distribution $\mathcal{H}$. Furthermore, recall that the metrics $\langle\cdot,\cdot\rangle_{\lambda}$ are obtained by rescaling the normal homogeneous metric only in the direction of $\mathcal{V}$. Therefore, $\phi^V_t$ acts as an isometry for $\langle\cdot,\cdot\rangle_\lambda$, making $V$ a Killing field for every $\lambda > 0$. This proves~$\textup{(i)}$. 
		
		As $V$ is $\mathsf{G}$--invariant, it is also $\mathsf{H}$--invariant.  Since $v$ is a fixed vector of $\mathsf{H}$, then $[\mathfrak{h}, v] = 0$, and thus
		\[
		dh_o(v_o^*) = v_o^*, \quad \text{for all } h \in \mathsf{H},
		\]
		yielding~$\textup{(ii)}$. Now as a consequence of $\textup{(i)}$ and $\textup{(ii)}$, we have $\textup{(iii)}$. 	Let us consider the geodesic $L(o) \cong \s{S}^1$ of $(M,\langle\cdot,\cdot\rangle_{\lambda})$. Let $\phi^{v^*}_t$ be the flow of $v^*$. Since $v^*_o$ is tangent to $L(o)$ at $o$, and the integrable distribution $\mathcal{V}$ is $\mathsf{G}$--invariant, then $\phi^{v^*}_t(o)$ must always be tangent to $L(o)$. Then, since $\gamma(t) := \phi^{v^*}_{t}(o)$ has constant length, one has that $\gamma(t)$ is a geodesic. This implies that $\nabla_v v^* = 0$, and the same argument works if we replace $v^*$ by $V$, yielding $\textup{(iv)}$. Moreover, as $V$ is $\mathsf{G}$--invariant we have, $dg_p(V_p)=V_{gp}$ and thus $d\phi^{v^*}_t(V_p)=V_{\phi^{v^*}_t(p)}$.  Hence, $[v^*, V]_p=0$ for each $p\in M$ implying $\textup{(v)}$. Now by Equation~\eqref{eq:bracketkilling}, we also deduce $\textup{(vi)}$ as $[v^*,V]_o=0$ and $R_o(v^*, V)=0$. 
		Now, if $(\nabla V)_o=0$, by $\mathsf{G}$-invariance, $V$ would be a non-trivial parallel field and $M$ would split off a one-dimensional factor. This contradicts that $M$ is compact and simply connected. Hence, $\textup{(vii)}$ also holds. 
		
		By Schur's lemma, we have that the set of $\mathsf{H}$-intertwining maps $\mathrm{End}_{\mathsf{H}}(\mathfrak{m}_2)$ is isomorphic to an associative normed division algebra $\F$ equal to $\R$, $\C$ or $\H$. Moreover, the involution that maps $T\in\mathrm{End}_{\mathsf{H}}(\mathfrak{m}_2)$ to the adjoint map $T^*\in\mathrm{End}_{\mathsf{H}}(\mathfrak{m}_2)$ corresponds to the conjugation of the normed division algebra $\F$. Hence, skew-symmetric elements in  $\mathrm{End}_{\mathsf{H}}(\mathfrak{m}_2)$ correspond to purely imaginary elements in $\F$, implying by $(\textup{iii})$ and  $(\textup{vii})$, that  $\mathrm{End}_{\mathsf{H}}(\mathfrak{m}_2)$ is not isomorphic to~$\R$. Now two imaginary elements of $\H$ or $\C$ that commute must differ by a real multiple. Thus, by $(\textup{vi})$, we deduce that  $(\nabla v^*)_p = a(\nabla V)_o$ for some $a \in \mathbb{R}$. Then $\widetilde{V}:= v^* - aV$ is a non-trivial infinitesimal transvection of $(M,\langle\cdot,\cdot\rangle_{\lambda})$ at $o$. In fact, if $\widetilde{V}_o = 0$, since $(\nabla \widetilde{V})_o = 0$, then $\widetilde{V} = 0$. This would imply that $v^* = aV$, which is a contradiction with $\textup{(v)}$ since $\mathfrak{g}$ has a trivial center.  Finally, as $M$ is not locally isometric to a product or to a symmetric space, by Theorem~\ref{th:2-isotropy_fibration}, we deduce that 	$i_{\mathfrak{s}}(\mathsf{G}/\mathsf{H},\langle\cdot,\cdot\rangle_{\lambda})= 1$ for every $\lambda>0$ as we required.\qedhere
	\end{proof}
	
	Now we are in a position to prove Theorem~B.
	
	\begin{proof}[Proof of Theorem~B]
		Let $\mathsf{H}<\mathsf{K}<\mathsf{G}$ induce a homogeneous fibration with $B=\mathsf{G}/\mathsf{K}$ an irreducible symmetric space and $\mathsf{K}/\mathsf{H}=\mathsf{S}^1$. As $\mathfrak{h}$ is a codimension one subalgebra of the compact Lie algebra~$\mathfrak{k}$, it is an ideal. Then by the classification of irreducible symmetric spaces we have that $\dim(\s{Z}(\mathsf{K}))=1$; and  by \cite[Theorem~6.1]{Helgason}, we deduce that $B$ is an irreducible Hermitian symmetric space of compact type. Therefore $B$ is one of the spaces listed below:
		\[
		\mathsf{SU}_{p+q}/\mathsf{S}(\mathsf{U}_p\times \mathsf{U}_q),\ 
		\mathsf{SO}_{n+2}/(\mathsf{SO}_2\times \mathsf{SO}_n),\ 
		\mathsf{Sp}_n/\mathsf{U}_n,\ 
		\mathsf{SO}_{2n}/\mathsf{U}_n,\ 
		\mathsf{E}_6/(\mathsf{SO}_{10}\times \mathsf{U}_1),\ 
		\mathsf{E}_7/(\mathsf{E}_6\times \mathsf{U}_1)
		\]
		By inspection of the tables in~\cite{Dickinson-Kerr}, we deduce that all homogeneous $\mathsf{S}^1$--bundles over the aforementioned Hermitian symmetric spaces of compact type have two isotropy inequivalent modules except for $\mathsf{SO}_{n+2}/ \mathsf{SO}_n$, which is the $\mathsf{S}^1$--homogeneous bundle over the real Grassmannian of oriented $2$-planes.  As $B$ is an irreducible symmetric space which is not of group type, $\s{G}$ is simple. Thus, the result follows by Proposition~\ref{prop:s1bundlelow}.
	\end{proof}

	\subsection{The extension of vertical infinitesimal transvections }
	\label{subsec:transext}
	Now we focus on the problem of understanding when an infinitesimal transvection of the fiber $F$, with $\dim F>1$, is also a transvection in the total space of the homogeneous fibration.
	
	\begin{proposition}\label{prop:verticaltrans}
		Let us assume that $M=\mathsf{G}/\mathsf{H}$ is the total space of a homogeneous fibration induced by the triple of compact Lie groups $\mathsf{H}<\mathsf{K}<\mathsf{G}$. Consider the Riemannian homogeneous metric  $\langle\cdot,\cdot\rangle_{\lambda}$ on $M=\mathsf{G}/\mathsf{H}$  defined by the Equation~\eqref{eq:lambdametric} and let $\xi\in\mathfrak{m}_1$.  If $\xi^*_{\rvert\mathsf{K}/\mathsf{H}}$ is an infinitesimal transvection of the fiber $F\cong\mathsf{K}/\mathsf{H}$ at $o$, then $\xi^*$ is an infinitesimal transvection of $M=\mathsf{G}/\mathsf{H}$ at $o$ with respect to the metric $\langle\cdot,\cdot\rangle_{\lambda}$ for $\lambda=2$.
	\end{proposition}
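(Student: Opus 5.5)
We will compute $(\nabla \xi^*)_o$ directly via Equation~\eqref{eq:LCconnectionkillingind}, evaluating $\langle \nabla_{X^*}\xi^*, Y^*\rangle_\lambda$ at $o$ for $X,Y$ ranging over $\mathfrak{m}_1$ and $\mathfrak{m}_2$, and show that every block vanishes exactly when $\lambda=2$ (the vertical-vertical block vanishing for all $\lambda$). By Equation~\eqref{eq:L-C} applied to the Killing fields $X^*,\xi^*,Y^*$, and using $[X^*,Y^*]=-[X,Y]^*$, we get at $o$
\[
2\langle \nabla_{X^*}\xi^*, Y^*\rangle_\lambda = -\langle [X,\xi]_{\mathfrak{m}},Y\rangle_\lambda-\langle [X,Y]_{\mathfrak{m}},\xi\rangle_\lambda-\langle[\xi,Y]_{\mathfrak{m}},X\rangle_\lambda ,
\]
and we will expand each term using $\langle\cdot,\cdot\rangle_\lambda=\lambda b|_{\mathfrak{m}_1}+b|_{\mathfrak{m}_2}$, $\mathrm{Ad}$-invariance of $b$, and the bracket relations $[\mathfrak{k},\mathfrak{k}]\subset\mathfrak{k}$, $[\mathfrak{k},\mathfrak{m}_2]\subset\mathfrak{m}_2$, $[\mathfrak{h},\mathfrak{m}_1]\subset\mathfrak{m}_1$.

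\emph{Vertical--vertical block.} For $X,Y\in\mathfrak{m}_1$ all brackets lie in $\mathfrak{k}$, so only $\mathfrak{m}_1$-components enter and the expression is $\lambda$ times the corresponding expression for the fiber $\mathsf{K}/\mathsf{H}$ with its normal metric induced by $b$. Since the fiber is totally geodesic (Lemma~\ref{lemma:tghomfib}) and $\xi^*|_F$ is a transvection of $F$ at $o$, this block vanishes for every $\lambda$. \emph{Mixed blocks.} For $X\in\mathfrak{m}_1$, $Y\in\mathfrak{m}_2$ (or vice versa), each bracket $[X,\xi]$ lies in $\mathfrak{k}$ and is orthogonal to $Y$, while $[X,Y],[\xi,Y]\in\mathfrak{m}_2$ are orthogonal to $\xi,X\in\mathfrak{m}_1$; so these vanish identically. \emph{Horizontal--horizontal block.} For $X,Y\in\mathfrak{m}_2$: $[X,\xi]=-[\xi,X]\in\mathfrak{m}_2$, so the first term is $b([\xi,X],Y)$; the second is $-\lambda b([X,Y],\xi)=-\lambda b(X,[Y,\xi])=\lambda b([\xi,Y],X)$; the third is $-b([\xi,Y],X)$. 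Using $b([\xi,X],Y)=-b(X,[\xi,Y])$, the sum is $(\lambda-2)\,b([\xi,Y],X)$. Hence this block vanishes for all $X,Y$ precisely when $\lambda=2$ (or $\mathrm{ad}_\xi|_{\mathfrak{m}_2}=0$).

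Combining, $(\nabla\xi^*)_o=0$ for $\lambda=2$, proving the proposition. The only delicate step is the vertical block: one must make sure the identification of $\nabla^F$ with the restriction of $\nabla$ is compatible with the formula above, i.e. that the $\mathfrak{m}_1$-projection of $[X,Y]$ for $X,Y\in\mathfrak{k}\cap\mathfrak{m}$ computes the fiber's Levi-Civita connection. We handle this by noting that Equation~\eqref{eq:LCconnectionkillingind} for $\mathsf{K}/\mathsf{H}$ with reductive complement $\mathfrak{m}_1$ gives exactly the same formula scaled by $\lambda$, and that $\nabla_{X^*}\xi^*$ is tangent to $F$ by total geodesy; sign conventions for brackets of fundamental fields must be tracked consistently but do not affect the vanishing.
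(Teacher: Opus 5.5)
Your proof is correct and follows essentially the same route as the paper. You apply the Killing-field Koszul formula \eqref{eq:L-C} to the horizontal block and get $(\lambda-2)\,b([\xi,Y],X)=(2-\lambda)\,b([X,Y],\xi)$, which is exactly Equation~\eqref{eq:LClambda}, and the blocks involving a vertical vector vanish by the hypothesis and total geodesy of the fibers (Lemma~\ref{lemma:tghomfib}); the only cosmetic difference is that you kill the mixed blocks directly via $[\mathfrak{k},\mathfrak{m}_2]\subset\mathfrak{m}_2$, where the paper uses Killing skew-symmetry together with total geodesy.
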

	\begin{proof}
		Let us consider the metric $\langle \cdot,\cdot\rangle_{\lambda}$ on $M$ as in Equation~\eqref{eq:lambdametric}. Let $\xi\in\mathfrak{m}_1$ and assume that $\xi^*_{\rvert\mathsf{K}/\mathsf{H}}$ is an infinitesimal transvection of the fiber $F=\mathsf{K}/\mathsf{H}$ at $o$. By Lemma~\ref{lemma:tghomfib}, the vertical distribution $\mathcal{V}$ integrates to a totally geodesic submanifold of $M$ passing through $o$ with respect to $\langle\cdot,\cdot\rangle_{\lambda}$. Hence, as $\xi^*$ is Killing we have
		\[
		\langle ({\nabla}^{\lambda}_{v^*_{o}} \xi^*)_{o}, \eta^*_{o} \rangle_{\lambda} = - \langle ({\nabla}^{\lambda}_{\eta^*_{o}} \xi^*)_{o}, v^*_{o} \rangle_{\lambda} = 0,
		\]
		for all $\eta \in \mathfrak{m}_1$ and $v \in \mathfrak{m}_2$, where $\nabla^\lambda$ denotes the Levi-Civita connection associated with the metric $\langle\cdot,\cdot\rangle_{\lambda}$.

		Now, let $v,w \in \mathfrak{m}_2$. Using the general formula for the Levi-Civita connection for Killing fields (see ~Equation~\eqref{eq:L-C}), together with the definition of $\langle\cdot,\cdot\rangle_{\lambda}$, (see~Equation~\eqref{eq:lambdametric}), we obtain
		\begin{equation}
			\label{eq:LClambda}
			\begin{aligned}
				2\langle ({\nabla}^{\lambda}_{v^*_{o}}\xi^*)_{o}, w^*_{o}\rangle_\lambda 
				&= \langle [v^*, \xi^*]_{o}, w^*_{o}\rangle_\lambda + \langle [\xi^*, w^*]_{o}, v^*_{o}\rangle_\lambda + \langle [v^*, w^*]_{o}, \xi^*_{o}\rangle_\lambda \\
				&= -\langle [v, \xi], w\rangle_\lambda - \langle [\xi, w], v\rangle_\lambda - \langle [v, w], \xi \rangle_\lambda \\
				&= -b([v, \xi], w) - b([\xi, w], v) - \lambda b([v, w], \xi)= (2-\lambda)\,b([v, w], \xi),
			\end{aligned}
		\end{equation}
		where we have used that $\ad$ is skew-symmetric with respect to $b$, since $b$ is $\Ad(\mathsf{G})$--invariant. Therefore, by Equation~\eqref{eq:LClambda}, if $\lambda = 2$, the Killing vector field $\xi^*$ is an infinitesimal transvection of $M$ at $o$ with respect to the metric~$\langle\cdot,\cdot\rangle_2$, proving the desired result.\qedhere
	\end{proof}
	
	The following result gives a converse to the previous proposition in the case that $\dim F\ge2$.
	
	\begin{proposition}\label{prop:verticaltransconversedimge2}
		Let us assume that $M=\mathsf{G}/\mathsf{H}$ is the total space of a homogeneous fibration induced by the triple of compact Lie groups $\mathsf{H}<\mathsf{K}<\mathsf{G}$ with $\dim\mathsf{K}/\mathsf{H}\ge 2$. Consider the Riemannian homogeneous metric  $\langle\cdot,\cdot\rangle_{\lambda}$ on $M=\mathsf{G}/\mathsf{H}$  defined by the Equation~\eqref{eq:lambdametric} and suppose that it is not locally isometric to a Riemannian product metric.
		Furthermore, let us assume that the following conditions are satisfied:
		\begin{enumerate}
			\item[\textup{(i)}] the submodule $\mathfrak{m}_1$~is $\mathsf{H}^{0}$--irreducible,
			\item[\textup{(ii)}] the element $\xi\in\mathfrak{k}\setminus\{0\}$ induces an infinitesimal transvection on $\mathsf{K}/\mathsf{H}$, which is also   with respect to $\langle\cdot,\cdot\rangle_{\lambda}$.
		\end{enumerate}
		Then, if $\xi\in\mathfrak{m}_1$ or $(\mathsf{K},\mathsf{H})$ is a symmetric pair (not necessarily effective), we have that $\lambda=2$. 	
	\end{proposition}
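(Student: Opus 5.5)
The plan is to reverse the computation in Equation~\eqref{eq:LClambda} from the proof of Proposition~\ref{prop:verticaltrans}. Write $\xi=\xi_{\mathfrak h}+\xi_{\mathfrak m_1}$ according to $\mathfrak k=\mathfrak h\oplus\mathfrak m_1$.

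\textbf{Case $\xi\in\mathfrak m_1$.} The identity \eqref{eq:LClambda} gives, for all $v,w\in\mathfrak m_2$,
\[
2\langle(\nabla^\lambda_{v^*}\xi^*)_o,w^*_o\rangle_\lambda=(2-\lambda)\,b([v,w],\xi).
\]
If $\xi^*$ is an infinitesimal transvection of $M$ at $o$, the left-hand side vanishes. So either $\lambda=2$, or $b([\mathfrak m_2,\mathfrak m_2],\xi)=0$, that is, $\xi\perp[\mathfrak m_2,\mathfrak m_2]_{\mathfrak m_1}$.

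In the second alternative, consider $W=\{\eta\in\mathfrak m_1: b([\mathfrak m_2,\mathfrak m_2],\eta)=0\}$. It is $\mathsf H^0$--invariant, since $\mathfrak m_2$ is $\Ad(\mathsf H)$--invariant and $b$ is $\Ad$--invariant. It is nonzero because it contains $\xi$. By hypothesis (i), $W=\mathfrak m_1$, so $[\mathfrak m_2,\mathfrak m_2]\subset\mathfrak h\oplus\mathfrak m_2$. We also have $[\mathfrak k,\mathfrak m_2]\subset\mathfrak m_2$, because $\mathfrak m_2=\mathfrak k^\perp$ and $b$ is invariant.

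From these inclusions, $\mathfrak g'=\mathfrak m_2+[\mathfrak m_2,\mathfrak m_2]$ is an ideal of $\mathfrak g$ with $\mathfrak g'\cap\mathfrak m_1=0$. Its $b$--orthogonal complement is an ideal containing $\mathfrak m_1$. The corresponding connected subgroups then give a local splitting of $M$ as a Riemannian product of the fiber and the orbit of the $\mathfrak g'$--factor. The key point is that $\langle\cdot,\cdot\rangle_\lambda$ restricted to $\mathfrak m_1\oplus\mathfrak m_2$ is an orthogonal sum with $\mathfrak m_1$ and $\mathfrak m_2$ tangent to different ideals, so for every $\lambda$ the metric is a product. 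This contradicts the non-product assumption, and hence $\lambda=2$.

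The main obstacle is making this splitting rigorous. One must check that the $\mathfrak h$--components of the two ideals cause no trouble, so that $\mathfrak h$ splits compatibly. One route is to pass to the effective quotient and show directly that $\mathcal V$ and $\mathcal H$ are both parallel via \eqref{eq:LCconnectionkillingindarb}. Here $\mathcal V$ is autoparallel by Lemma~\ref{lemma:tghomfib}, and the vanishing of the O'Neill $A$--tensor, which is essentially $[\mathfrak m_2,\mathfrak m_2]_{\mathfrak m_1}=0$, makes $\mathcal H$ integrable and totally geodesic. de Rham's theorem then gives local reducibility.

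\textbf{Case $(\mathsf K,\mathsf H)$ a symmetric pair.} Here $[\mathfrak m_1,\mathfrak m_1]\subset\mathfrak h$. The plan is to reduce to the first case. Since $\xi^*|_F$ is a transvection of $F$ at $o$, for $\eta\in\mathfrak m_1$ we have
\[
(\nabla^F_{\eta^*}\xi^*)_o=-\tfrac12[\eta,\xi]_{\mathfrak m_1}+(\text{terms coming from }\xi_{\mathfrak h}).
\]
On a symmetric pair the transvections at $o$ are exactly the elements of $\mathfrak m_1$ modulo the kernel of the isotropy action. Concretely, $(\nabla\xi_{\mathfrak h}^*)_o$ acts on $\mathfrak m_1$ by $-\ad(\xi_{\mathfrak h})$ by \eqref{eq:magicolmos}, and this must vanish, so $\xi_{\mathfrak h}$ acts trivially on $\mathfrak m_1$.

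Next, subtract: the field $(\xi-\xi_{\mathfrak m_1})^*=\xi_{\mathfrak h}^*$ vanishes at $o$ and has $(\nabla\xi_{\mathfrak h}^*)_o=-\ad(\xi_{\mathfrak h})|_{\mathfrak m}$. We must show $\xi_{\mathfrak m_1}^*$ is still a transvection of $M$, i.e.\ that $\ad(\xi_{\mathfrak h})|_{\mathfrak m_2}$ equals $(\nabla\xi_{\mathfrak m_1}^*)_o|_{\mathfrak m_2}$ up to sign. Pair both with $w\in\mathfrak m_2$. The combined condition reads
\[
(2-\lambda)\,b([v,w],\xi_{\mathfrak m_1})=2\,b([\xi_{\mathfrak h},v],w)\quad\text{for all } v,w\in\mathfrak m_2.
\]
The left side is $\mathsf H$--equivariant in $\xi_{\mathfrak m_1}$, and the right side in $\xi_{\mathfrak h}$, which lies in the ideal of $\mathfrak h$ acting trivially on $\mathfrak m_1$. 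Taking $\xi_{\mathfrak m_1}\ne0$ (otherwise $\xi^*|_F=0$ is trivial and $\xi_{\mathfrak h}$ must act trivially on all of $\mathfrak m$, hence $\xi=0$ by effectivity), I would apply an element of $\mathsf H^0$ moving $\xi_{\mathfrak m_1}$ while fixing $\xi_{\mathfrak h}$, using irreducibility (i) and $\dim\mathfrak m_1\ge2$. Subtracting the resulting identities forces $\lambda=2$, or else $b([\mathfrak m_2,\mathfrak m_2],\mathfrak m_1)=0$, which leads back to the product contradiction of the first case.
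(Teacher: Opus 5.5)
Your proposal is correct and follows essentially the paper's argument. You split $\xi=\xi_{\mathfrak{h}}+\xi_{\mathfrak{m}_1}$ and show $\xi_{\mathfrak{h}}\in\mathsf{Z}_{\mathfrak{h}}(\mathfrak{m}_1)$ in the symmetric-pair case. You then derive $2b([v,w],\xi_{\mathfrak{h}})=(\lambda-2)\,b([v,w],\xi_{\mathfrak{m}_1})$ on $\mathfrak{m}_2$ (your version has a harmless sign slip) and play it against the $\mathsf{H}^0$--irreducibility of $\mathfrak{m}_1$ and $[\mathfrak{m}_2,\mathfrak{m}_2]_{\mathfrak{m}_1}\neq 0$. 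The differences are cosmetic: you prove the local product splitting directly, where the paper cites Di Scala (the ideals $\mathfrak{m}_2+[\mathfrak{m}_2,\mathfrak{m}_2]$ and its complement do split $\mathfrak{h}$ by a dimension count), and you subtract the identity after moving by an element fixing $\xi_{\mathfrak{h}}$, which is the paper's $\rho$-invariance step.
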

	
	\begin{proof}
		Suppose that $\xi\in\mathfrak{k}\setminus\{0\}$ induces an infinitesimal transvection on the totally geodesic submanifold $\mathsf{K}/\mathsf{H}\subset\mathsf{G}/\mathsf{H}$ with respect to the induced holonomy irreducible metric $\langle\cdot,\cdot\rangle_{\lambda}$, and that  $\mathfrak{m}_1$ is $\mathsf{H}^{0}$--irreducible. Then, we can decompose $\xi\in\mathfrak{k}$ as $\xi=\xi_{\mathfrak{m}_1} + \xi_{\mathfrak{h}}$, where $\xi_{\mathfrak{m}_1}\in\mathfrak{m}_1$ and $\xi_{\mathfrak{h}}\in\mathfrak{h}$. We can define $\mathfrak{h}_0:=\mathsf{Z}_{\mathfrak{h}}(\mathfrak{m}_1)$. Notice that the connected subgroup $\mathsf{H}_0<\mathsf{H}$ with Lie algebra $\mathfrak{h}_0$ is a normal subgroup of $\mathsf{H}$ and induces Killing vector fields that vanish in fiber and that $\xi_{\mathfrak{h}}\in\mathfrak{h}_0$.
		
		We claim that if $\xi\in\mathfrak{m}_1$ or $(\mathsf{K},\mathsf{H})$ is a symmetric pair (not necessarily effective), we have $\xi_{\mathfrak{h}}\in\mathfrak{h}_0$. If $\xi\in\mathfrak{m}_1$, we have $\xi_{\mathfrak{h}}=0\in\mathfrak{h}_0$. Let us assume that $(\mathsf{K},\mathsf{H})$ is a symmetric pair (not necessarily effective). Observe that  $\xi^*_{\rvert\mathsf{K}/\mathsf{H}}$ and $\xi^*_{\mathfrak{m}_1\rvert\mathsf{K}/\mathsf{H}}$ are tangent to $\mathsf{K}/\mathsf{H}$, and they are Killing vector fields as they are the restriction of a Killing vector field to a totally geodesic submanifold. Furthermore, the vector field $\xi^*_{\mathfrak{m}_1\rvert\mathsf{K}/\mathsf{H}}$ is an infinitesimal transvection of $\mathsf{K}/\mathsf{H}$ at~$o$.  Thus,
		\[0= (\nabla \xi^*_{\rvert\mathsf{K}/\mathsf{H}})_o - (\nabla \xi^*_{\mathfrak{m}_1\rvert\mathsf{K}/\mathsf{H}})_o=(\nabla \xi^*_{\mathfrak{h}\rvert\mathsf{K}/\mathsf{H}})_o ,\]
		as the restriction of an infinitesimal transvection to a totally geodesic submanifold is an infinitesimal transvection of the totally geodesic submanifold.	
		Moreover, $\xi^*_{\mathfrak{h}\rvert\mathsf{K}/\mathsf{H}}$ is a Killing vector field of $\mathsf{K}/\mathsf{H}$ as it is the difference of two Killing vector fields on $\mathsf{K}/\mathsf{H}$. As $\xi_{\mathfrak{h}}\in\mathfrak{h}$,  we have $(\xi^*_{\mathfrak{h}\rvert\mathsf{K}/\mathsf{H}})_o=(\nabla \xi^*_{\mathfrak{h}\rvert\mathsf{K}/\mathsf{H}})_o=0$, and by uniqueness, we deduce that  $\xi^*_{\mathfrak{h}\rvert\mathsf{K}/\mathsf{H}}=0$ and therefore, $\xi_{\mathfrak{h}}\in\mathfrak{h}_0$ by Equation~\eqref{eq:L-C}. 
		
		Let us argue by contradiction. To this end, assume that $\lambda\neq 2$ and that $\xi^*$ is an infinitesimal transvection of $M$ at $o$ with respect to $\langle\cdot,\cdot\rangle_{\lambda}$. Take $v,w\in\mathfrak{m}_2$, then by Equation~\eqref{eq:L-C}, we have
		\begin{align*}
			2\langle (\nabla_{v^*_{o}}^\lambda \xi^*_{\mathfrak{h}})_{o}, w^*_{o}\rangle_{\lambda} &=\langle [v^*, \xi^*_{\mathfrak{h}}]_{o}, w^*_{o}\rangle_\lambda + \langle [\xi^*_{\mathfrak{h}}, w^*]_{o}, v^*_{o}\rangle_\lambda + \langle [v^*, w^*]_{o}, (\xi^*_{\mathfrak{h}})_{o}\rangle_\lambda\\
			&=	\langle [v^*, \xi^*_{\mathfrak{h}}]_{o}, w^*_{o}\rangle_\lambda +\langle [\xi^*_{\mathfrak{h}}, w^*]_{o}, v^*_{o}\rangle_\lambda=-b( [v, \xi_{\mathfrak{h}}], w) -b([\xi_{\mathfrak{h}}, w], v)\\
			&=2b( [v, w], \xi_{\mathfrak{h}}).
		\end{align*}
		Thus, by Equation~\eqref{eq:LClambda}, as $\xi^*$ is an infinitesimal transvection on $\mathsf{K}/\mathsf{H}$, we deduce the identity
		\begin{equation}
			\label{eq:conservedquantity}
			b([v, w], \xi_{\mathfrak{h}})=\frac{\lambda-2}{2} b([v, w], \xi_{\mathfrak{m}_1}) \quad \text{for all $v,w\in\mathfrak{m}_2$}.
		\end{equation}
		We claim that $[\mathfrak{m}_2,\mathfrak{m}_2]_{\mathfrak{m}_1}=\mathfrak{m}_1$. Notice that the subspace $[\mathfrak{m}_2,\mathfrak{m}_2]_{\mathfrak{m}_1}$ is an $\mathsf{H}^{0}$--invariant subspace of $\mathfrak{m}_1$ as the action of $\mathsf{H}^{0}$ on $\mathfrak{m}$ preserves the orthogonal splitting $\mathfrak{m}_1\oplus\mathfrak{m}_2$. Moreover, if $[\mathfrak{m}_2,\mathfrak{m}_2]_{\mathfrak{m}_1}$ is zero, then the horizontal distribution, which is $\mathsf{G}$--invariant and can be identified with $\mathfrak{m}_2$ at $o$, is integrable. This implies by~\cite{Di-Scala} that the metric $\langle\cdot,\cdot\rangle_{\lambda}$ is locally isometric to a Riemannian product metric contradicting our hypothesis. Thus, $[\mathfrak{m}_2,\mathfrak{m}_2]_{\mathfrak{m}_1}$ is a non-zero $\mathsf{H}^{0}$--invariant subspace of $\mathfrak{m}_1$, and by $\mathsf{H}^{0}$--irreducibility of $\mathfrak{m}_1$, we have $[\mathfrak{m}_2,\mathfrak{m}_2]_{\mathfrak{m}_1}=\mathfrak{m}_1$.
		
		Let us consider the linear function $\rho\colon\mathfrak{m}_1\rightarrow \R$ given by $\rho(\eta)=b(\eta,\xi_{\mathfrak{m}_1})$. We claim that $\rho$ is an $\mathsf{H}'$--invariant linear function, where $\mathsf{H}':=\mathsf{H}^{0}/\mathsf{H}_0$, that is, the effectivization of the $\mathsf{H}^{0}$ action on $\mathfrak{m}_1$.	As $[\mathfrak{m}_2,\mathfrak{m}_2]_{\mathfrak{m}_1}=\mathfrak{m}_1$, we can write every $\eta\in\mathfrak{m}_1$ as $\eta=[v,w]_{\mathfrak{m}_1}$, where $v,w\in\mathfrak{m}_2$. Then, for each $h\in \mathsf{H}'$, we have
		\begin{equation*}
			\begin{aligned}
				\rho(\Ad(h)\eta)&=	\rho(\Ad(h)[v,w]_{\mathfrak{m}_1})=\rho([\Ad(h)v,\Ad(h)w]_{\mathfrak{m}_1})=b([\Ad(h)v,\Ad(h)w],\xi_{\mathfrak{m}_1})\\
				&=\frac{2}{\lambda-2}b([\Ad(h)v,\Ad(h)w],\xi_{\mathfrak{h}})=\frac{2}{\lambda-2}b([v,w],\Ad(h^{-1})\xi_{\mathfrak{h}})=\frac{2}{\lambda-2}b([v,w],\xi_{\mathfrak{h}})\\
				&=b([v,w],\xi_{\mathfrak{m}_1})=b(\eta,\xi_{\mathfrak{m}_1})=\rho(\eta),
			\end{aligned}
		\end{equation*}
		where we have used Equation~\eqref{eq:conservedquantity}, the fact that $\xi_{\mathfrak{h}}\in\mathfrak{h}_0$ and that $b$
		is $\Ad(\mathsf{H}')$--invariant.
		
		Then, $\ker(\rho)=\mathfrak{m}_1\ominus\R\xi_{\mathfrak{m}_1}$, where $\ominus$ denotes the orthogonal complement with respect to $b$, is an invariant subspace of $\mathfrak{m}_1$ with codimension one. Consequently, this yields a contradiction with the fact that $\mathfrak{m}_1$ is an $\mathsf{H}^{0}$--irreducible module, and thus an $\mathsf{H}'$--irreducible module of dimension greater than one. \qedhere
	\end{proof} 
	Assume that $M=\mathsf{G}/\mathsf{H}$ is the total space of a homogeneous fibration with two inequivalent isotropy submodules.  The fiber $\mathsf{K}/\mathsf{H}$ is isotropy irreducible, and thus, by \cite[Corollary~1.3]{OR-crelle} the Lie algebra of $\mathsf{K}$ coincides with the Lie algebra of the isometry group of the fiber $F=\mathsf{K}/\mathsf{H}$ equipped with the metric induced by $\langle\cdot,\cdot\rangle$  up to its ineffective kernel. Then, as a consequence of Theorem~\ref{th:2-isotropy_fibration}, Proposition~\ref{prop:verticaltrans}, and Proposition~\ref{prop:verticaltransconversedimge2} we have the following result that allows us to compute the index of symmetry of many $\mathsf{G}$--invariant metrics on $2$-isotropy irreducible homogeneous spaces.
	
	\begin{corollary}
		\label{cor:indnotextension}
		Let $M=\mathsf{G}/\mathsf{H}$ be the total space of a homogeneous fibration equipped with a non-symmetric $\mathsf{G}$--invariant metric $\langle\cdot,\cdot\rangle$ and induced by the triple of compact Lie groups $\mathsf{H}<\mathsf{K}<\mathsf{G}$ where the fiber $F$ satisfies $\dim\mathsf{K}/\mathsf{H}\ge 2$. 
		Furthermore, let us assume that the following conditions are satisfied:
		\begin{enumerate}
			\item[\textup{(i)}] $(\mathsf{K},\mathsf{H})$ is a (not necessarily effective) symmetric pair or the fiber $F=\mathsf{K}/\mathsf{H}$ is neither a sphere nor a Lie group.
			\item[\textup{(ii)}] $\mathfrak{m}_1$ and $\mathfrak{m}_2$ are inequivalent $\mathsf{H}^{0}$--irreducible modules.
		\end{enumerate} 
		Then,  the leaf of symmetry of $(M,\langle\cdot,\cdot\rangle)$ coincides with $F$ if and only if $\langle\cdot,\cdot\rangle$ is homothetic to the metric $\langle\cdot,\cdot\rangle_{\lambda}$ with $\lambda=2$, defined in Equation~\eqref{eq:lambdametric}; and it is just a point in any other case.
	\end{corollary}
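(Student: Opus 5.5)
By (ii), $\mathfrak{m}=\mathfrak{m}_1\oplus\mathfrak{m}_2$ splits into two inequivalent irreducible $\mathsf{H}^0$-modules. Hence, by Equation~\eqref{eq:Homogeneous_Metric}, every $\mathsf{G}$--invariant metric is, up to homothety, one of the canonical variations $\langle\cdot,\cdot\rangle_\lambda$, and we may work with $\langle\cdot,\cdot\rangle_\lambda$ throughout. If $\mathfrak{s}$ is non-trivial, Theorem~\ref{th:2-isotropy_fibration}(i) gives $\mathfrak{s}=\mathcal{D}_1$ or $\mathfrak{s}=\mathcal{D}_2$. The leaf of symmetry therefore coincides with $F$ exactly when $\mathfrak{s}_o=\mathfrak{m}_1$, and it is a point exactly when $\mathfrak{s}$ is trivial. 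The statement thus splits into three claims: (a) for $\lambda=2$ we have $\mathfrak{m}_1\subset\mathfrak{s}_o$; (b) for $\lambda\neq 2$ we have $\mathfrak{m}_1\not\subset\mathfrak{s}_o$; and (c) $\mathfrak{s}_o\neq\mathfrak{m}_2$ for every $\lambda$.

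For (a), first check that the fiber $F=\mathsf{K}/\mathsf{H}$, with the metric induced by $\langle\cdot,\cdot\rangle_\lambda$, is symmetric. It is a multiple of the normal metric $b|_{\mathfrak{m}_1}$ and it is isotropy irreducible. In the symmetric-pair case this is immediate. Otherwise I would argue that the non-trivial index of symmetry needed for $\lambda=2$ forces it, or simply restrict (a) to the case where $F$ is symmetric. Each $\xi\in\mathfrak{m}_1$ then induces an infinitesimal transvection of $F$ at $o$, because for a symmetric pair the $\mathfrak{m}_1$-elements are exactly the transvections. Proposition~\ref{prop:verticaltrans} then shows that $\xi^*$ is a transvection of $M$ for $\lambda=2$, so $\mathfrak{m}_1\subset\mathfrak{s}_o$. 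Since the metric is non-symmetric, equality holds, and $L(o)=F$ by Lemma~\ref{lemma:tghomfib} and Theorem~\ref{th:2-isotropy_fibration}(iii).

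For (b), suppose $\mathfrak{s}_o=\mathfrak{m}_1$ with $\lambda\neq2$. The leaf $L(o)=F$ is totally geodesic, so each transvection of $M$ at $o$ restricts to a transvection of $F$. Such a Killing field comes from some $\xi\in\mathfrak{g}$, and since it is tangent to $F$ and preserves the leaf foliation, I expect to show $\xi\in\mathfrak{k}$. Here I use the fact quoted before the corollary (\cite[Corollary~1.3]{OR-crelle}) that $\mathfrak{k}$ is the full Killing algebra of $F$ modulo its kernel. The delicate point is that the Killing algebra of $M$ could be larger than $\mathfrak{g}$; I would handle this by working with $\mathsf{G}$ as in Theorem~\ref{th:2-isotropy_fibration}. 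Then Proposition~\ref{prop:verticaltransconversedimge2} applies. Its hypothesis (i) is our (ii), and $M$ is not a local product, since otherwise $[\mathfrak{m}_2,\mathfrak{m}_2]_{\mathfrak{m}_1}=0$ and the metric would be symmetric or decomposable, which needs a short remark. For its final hypothesis we need either that $(\mathsf{K},\mathsf{H})$ is symmetric, or that $\xi$ can be taken in $\mathfrak{m}_1$. In the alternative case of condition (i), where $F$ is neither a sphere nor a Lie group, the $\mathsf{K}$-presentation of $F$ agrees with that of its full isometry group. Isotropy irreducible spaces other than spheres and group manifolds do not admit larger transitive groups (Onishchik), so $(\mathsf{K},\mathsf{H})$ is effectively the symmetric pair of $F$. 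Either way we get $\lambda=2$, a contradiction. This reduction of condition (i) to the hypotheses of Proposition~\ref{prop:verticaltransconversedimge2} is the step I expect to be the main obstacle.

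For (c), suppose $\mathfrak{s}_o=\mathfrak{m}_2$. By Theorem~\ref{th:2-isotropy_fibration}(iii) the leaves of $\mathcal{D}_2$ are then totally geodesic and integrable, so $[\mathfrak{m}_2,\mathfrak{m}_2]_{\mathfrak{m}_1}=0$. The $\mathsf{G}$--invariant horizontal distribution is then integrable, and \cite{Di-Scala} gives a local product, with both $\mathcal{D}_1$ and $\mathcal{D}_2$ autoparallel. In that case I would check directly that $\mathfrak{s}=TM$, i.e.\ the metric is symmetric, contradicting the hypothesis. Finally, if $\lambda\neq 2$, then $\mathfrak{s}$ is neither $\mathcal{D}_1$ nor $\mathcal{D}_2$, so it is trivial and the leaf is a point.
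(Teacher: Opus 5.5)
Your route is the paper's. Theorem~\ref{th:2-isotropy_fibration}(i) reduces $\mathfrak{s}$ to $0$, $\mathcal{D}_1$ or $\mathcal{D}_2$. Proposition~\ref{prop:verticaltrans} gives $\mathfrak{m}_1\subset\mathfrak{s}_o$ for $\lambda=2$. Proposition~\ref{prop:verticaltransconversedimge2} excludes vertical transvections for $\lambda\neq2$, and the ``neither a sphere nor a Lie group'' alternative is turned into the symmetric-pair case exactly as the paper does via \cite[Corollary~1.3]{OR-crelle}. Your appeal to Onishchik plays the same role, and it is not circular, since in (b) you already know that $F=L(o)$ is symmetric. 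Your step (c), which the paper never addresses, also works once $F$ comes from a symmetric pair. Integrability of $\mathcal{D}_2$ gives $[\mathfrak{m}_2,\mathfrak{m}_2]_{\mathfrak{m}_1}=0$, so the term $(2-\lambda)\,b([v,w],\xi)$ in \eqref{eq:LClambda} vanishes. Then every $\xi\in\mathfrak{m}_1$ is a transvection of $M$ for every $\lambda$, and $\mathfrak{s}_o=\mathfrak{m}$ contradicts non-symmetry; Di~Scala's result is not needed.

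Three of the patches you propose for the delicate points do not work. What they are meant to establish does not follow from the stated hypotheses; these are tacit assumptions of the corollary, and of the paper's one-line deduction.

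First, in (a), arguing that ``the non-trivial index of symmetry needed for $\lambda=2$ forces'' $F$ to be symmetric is circular. Since leaves of symmetry are symmetric, the ``if'' direction is false whenever $F$ is a non-symmetric isotropy irreducible space, which the second alternative of (i) permits. So $F$ symmetric has to be assumed.

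Second, in (b), the non-product hypothesis of Proposition~\ref{prop:verticaltransconversedimge2} cannot be supplied by a ``short remark''. Take $\mathsf{G}=\mathsf{G}'\times\mathsf{G}''$, $\mathsf{H}=\mathsf{H}'\times\mathsf{H}''$, $\mathsf{K}=\mathsf{G}'\times\mathsf{H}''$, with $\mathsf{G}'/\mathsf{H}'$ an irreducible symmetric space of dimension at least $2$ and $\mathsf{G}''/\mathsf{H}''$ a non-symmetric isotropy irreducible space (e.g.\ $\mathsf{SO}_5/\mathsf{SO}_3$ with $\mathsf{SO}_3$ acting irreducibly on $\R^5$). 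All hypotheses hold, every $\langle\cdot,\cdot\rangle_\lambda$ is a non-symmetric Riemannian product, and its leaf of symmetry is $F$ for every $\lambda$.

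Third, ``working with $\mathsf{G}$'' does not dispose of a Killing algebra $\tilde{\mathfrak{g}}\supsetneq\mathfrak{g}$. A transvection at $o$ in $\tilde{\mathfrak{g}}$ has the form $\xi^*+W$ with $\xi\in\mathfrak{g}$ and $W$ in the larger isotropy algebra, and Proposition~\ref{prop:verticaltransconversedimge2} says nothing about such fields. The paper verifies the non-product condition and the isometry group separately when it applies the corollary to $\mathsf{S}^{15}_{\mathbb{O},\tau}$ and $\mathbb{C}\mathsf{P}^{2n+1}_\tau$, using Lemmas~\ref{lem:Isom_Hopf_Berger} and~\ref{lem:Isometry_Group_Projective}. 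Once $\mathfrak{g}$ is the full Killing algebra, $\xi\in\mathfrak{k}$ is immediate: $\xi^*_o\in\mathfrak{m}_1$ forces $\xi_{\mathfrak{m}_2}=0$, and $\mathfrak{k}=\mathfrak{h}\oplus\mathfrak{m}_1$. No argument about preserving the foliation is needed.

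With these three assumptions made explicit, your outline is a complete proof along the paper's lines.
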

	We focus on those total spaces of the lists provided in~\cite{Dickinson-Kerr} that do not have spherical fibers.
	
	\begin{proof}[Proof of Theorem~C]
		The proof of the theorem is based on applying Corollary~\ref{cor:indnotextension} to the examples listed in the table. These are compact, simply connected homogeneous spaces $\mathsf{G}/\mathsf{H}$ with two inequivalent isotropy summands, where $\mathsf{G}$ is compact and simple, as classified by Dickinson and Kerr in~\cite{Dickinson-Kerr}. In all the examples considered below, the leaf of symmetry appears as the fiber of a homogeneous fibration associated with a triple of compact Lie groups $\mathsf{H} < \mathsf{K} < \mathsf{G}$, where the fiber $\mathsf{K}/\mathsf{H}$ is neither a sphere nor a Lie group, and is presented by a symmetric pair $(\mathsf{K}, \mathsf{H})$. Consequently, the infinitesimal transvections of $\s{K}/\s{H}$ at a point generate the tangent space to the fiber at that point. 
		
		It suffices to prove that, for $\lambda=2$, the metric cannot be symmetric on the total space. Thus, by Corollary~\ref{cor:indnotextension}, the leaf of symmetry is $F$, implying that the index of symmetry for that metric coincides with the dimension of the fiber $F$. To verify this, we first check whether the examples admit $\mathsf{G}$--invariant Einstein metrics. If they do not, then we claim that no $\mathsf{G}$--invariant metric can be symmetric. This is the case for the following examples
		\begin{align*}
			&\frac{\mathsf{E}_6}{\mathsf{Spin}_7  \mathsf{Spin}_3 \mathsf{SO}_2}, & &
			\frac{\mathsf{E}_6}{\mathsf{Spin}_5  \mathsf{Spin}_5  \mathsf{SO}_2}, & &
			\frac{\mathsf{E}_7}{\mathsf{Spin}_7 \mathsf{Spin}_5  \mathsf{Sp}_1},\\
			&\frac{\mathsf{E}_7}{\mathsf{Spin}_6  \mathsf{Spin}_6 \mathsf{Sp}_1}, & &
			\frac{\mathsf{E}_7}{\mathsf{Sp}_4 \times \mathsf{SO}_2}, & &
			\frac{\mathsf{E}_8}{\mathsf{SU}_8 \times \mathsf{Sp}_1},
		\end{align*}
		which correspond to examples IV.9, IV.11, IV.28, IV.35, IV.42, and IV.46 in the article~\cite{Dickinson-Kerr}. Indeed, suppose such a metric existed. If the symmetric metric were irreducible, it would necessarily be Einstein. If it were reducible, the simply connected compact homogeneous space $M$ would decompose as a Riemannian product $M=M_{1}\times \dots \times M_{k}$ of irreducible symmetric spaces of compact type. Since the isotropy representation decomposes into exactly two inequivalent irreducible summands ($\mathfrak{m}_1$ and $\mathfrak{m}_2$), the de Rham decomposition would be $M= M_1 \times M_2$, where the tangent spaces of the factors can be identified with $\mathfrak{m}_1$ and $\mathfrak{m}_2$. Irreducible symmetric spaces of compact type are Einstein with positive Einstein constants. Therefore, by an appropriate homothety on one of the factors (which corresponds to rescaling the metric along one of the irreducible modules), one can construct a product metric (and only one, up to homothety) which is a $\s{G}$--invariant Einstein metric.  However, as noted above, these specific examples do not admit any $\mathsf{G}$--invariant Einstein metrics. This contradiction implies that no $\mathsf{G}$--invariant metric can be symmetric.
		
		Now consider the families that do admit Einstein metrics, corresponding to examples I.18, II.6, and III.6 in~\cite{Dickinson-Kerr}:
		\begin{equation}\label{infinite_families}
			\mathsf{SO}_{m+2n}/(\mathsf{SO}_m\times \mathsf{U}_{n}), \enspace \mathsf{SU}_{n+m}/\mathsf{S}(\mathsf{SO}_{n}\times \mathsf{U}_{1}\times \mathsf{U}_{m}), \enspace \text{and}\enspace \mathsf{Sp}_{m+n}/(\mathsf{Sp}_{m}\times \mathsf{U}_{n}).
		\end{equation}
		First, we establish that if these spaces admitted a homogeneous symmetric metric, they would necessarily be irreducible symmetric spaces of compact type. Since $M$ is compact and simply connected, any symmetric metric must be of compact type. Furthermore, $M$ cannot be a product of symmetric spaces. 
		To see this, consider the reductive decomposition $\mathfrak{g}=\mathfrak{h}\oplus \mathfrak{m}_{1}\oplus \mathfrak{m}_{2}$, where $\mathfrak{m}_{1}$ and $\mathfrak{m}_{2}$ correspond to the vertical and horizontal subspaces, respectively. The intermediate subalgebra is $\mathfrak{k}=\mathfrak{h}\oplus \mathfrak{m}_{1}$. The irreducible module decomposition corresponds to the following homogeneous fibrations:
		\[
		\begin{aligned}
			& \mathsf{SO}_{2n}/\mathsf{U}_{n} \rightarrow \mathsf{SO}_{m+2n} / (\mathsf{SO}_{m}\times \mathsf{U}_{n}) \rightarrow \mathsf{SO}_{m+2n}/(\mathsf{SO}_{m}\times \mathsf{SO}_{2n}) \\
			& \mathsf{SU}_{n}/\mathsf{SO}_{n} \rightarrow \mathsf{SU}_{n+m}/\mathsf{S}(\mathsf{SO}_{n}\times \mathsf{U}_{1}\times \mathsf{U}_{m}) \rightarrow \mathsf{SU}_{n+m}/\mathsf{S}(\mathsf{U}_{n}\times \mathsf{U}_{m}) \\
			& \mathsf{Sp}_{n}/\mathsf{U}_{n} \rightarrow \mathsf{Sp}_{m+n}/(\mathsf{Sp}_{m}\times \mathsf{U}_{n}) \rightarrow \mathsf{Sp}_{m+n}/(\mathsf{Sp}_{m}\times \mathsf{Sp}_{n}).
		\end{aligned}
		\]
		In all cases, the base space is a symmetric space presented by a symmetric pair $(\mathsf{G}, \mathsf{K})$ of compact type. Since $\mathfrak{g}$ is simple, the decomposition $\mathfrak{g}=\mathfrak{k}\oplus \mathfrak{m}_{2}$ corresponds to a Cartan decomposition, implying $[\mathfrak{m}_{2},\mathfrak{m}_{2}] \subset \mathfrak{k}$. Specifically, because the base is an irreducible symmetric space of compact type, we have $[\mathfrak{m}_{2},\mathfrak{m}_{2}]_{\mathfrak{m}_{1}} \neq 0$ (in fact, it generates $\mathfrak{m}_1$).
		If the total space were a product of irreducible symmetric spaces, the tangent space would split into integrable distributions corresponding to the factors. Since the isotropy representation consists of inequivalent submodules $\mathfrak{m}_1$ and $\mathfrak{m}_2$, the splitting would have to align with these modules. However, the non-vanishing vertical component of the horizontal bracket shows that the distribution $\mathfrak{m}_2$ is not integrable. Thus, $M$ cannot be a Riemannian product.
		
		Consequently, if these spaces admitted a symmetric metric, they would be irreducible symmetric spaces of compact type. We can rule this out using classification results:
		\begin{itemize}
			\item Rank $=$ 1: None of the presentations in~\eqref{infinite_families} appear in the classification of transitive actions on CROSSes, see~\S~\ref{subsec:homCROSSes}, with the exception of the third family when $n=1$ and the space is diffeomorphic to a complex projective space, however, $n\ge3$ (see Table~\ref{table:theoremb}).
			\item Rank $>$ 1: According to \cite[Lemma 1.1]{kerr-einstein}, the only homogeneous presentation in these lists that admits an irreducible symmetric metric of rank strictly greater than 1 is the first family when $n=1$, but $n\ge2$ (see Table~\ref{table:theoremb}).
		\end{itemize}
		Therefore, by Corollary~\ref{cor:indnotextension}, the index of symmetry for the metric with $\lambda=2$ corresponds to the dimension of the fiber, and the leaf of symmetry is the fiber itself.
		\qedhere
	\end{proof}
	
	\section{The index of symmetry of homogeneous CROSSes}
	\label{sec:indexcross}
	In this section, we compute the index of symmetry of a homogeneous space diffeomorphic to a compact rank-one symmetric space. We start by considering the case of Hopf-Berger spheres.
	\begin{theorem}
		\label{th:hopfbergerindex}
		Let $M$ be isometric to a Hopf-Berger sphere $\mathsf{S}^{n}_{\F,\tau}$ with $\tau\neq1$. Then
		\[
		\begin{aligned}
			i_{\mathfrak{s}}(M) =  \left\{ \begin{array}{ll}
				1 \quad &\textrm{if $\mathbb{F}=\mathbb{C}$,} \\ \smallskip
				3 \quad &\textrm{if $\mathbb{F}=\mathbb{H}$ and $\tau = \frac{1}{2}$,}  \\ \smallskip
				7 \quad &\textrm{if $\mathbb{F}=\mathbb{O}$ and $\tau = \frac{1}{2}$,} \\ \smallskip
				0 \quad &\textrm{otherwise}.
			\end{array} \right.
		\end{aligned}
		\]
	\end{theorem}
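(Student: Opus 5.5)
The plan is to handle the three Hopf--Berger families separately, always working with the full identity component of the isometry group given by Lemma~\ref{lem:Isom_Hopf_Berger}. This matters because the distribution of symmetry depends on all Killing fields, not only on those coming from a chosen transitive group.

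\emph{Complex case.} Write $\mathsf{S}^{2n+1}_{\C,\tau}=\mathsf{U}_{n+1}/\mathsf{U}_n$, or equivalently $\mathsf{SU}_{n+1}/\mathsf{SU}_n$. This is the total space of the homogeneous $\mathsf{S}^1$--fibration over $\C\mathsf{P}^n=\mathsf{SU}_{n+1}/\mathsf{S}(\mathsf{U}_1\times\mathsf{U}_n)$. By Remark~\ref{rem:normalhomtau}, every Hopf--Berger metric is a canonical variation $\langle\cdot,\cdot\rangle_\lambda$ of the normal homogeneous one. The horizontal module $\mathfrak{m}_1\cong\C^n$ is $\mathsf{SU}_n$--irreducible (of complex type) for $n\ge1$, $\mathsf{SU}_{n+1}$ is simple, and for $\tau\neq1$ the metric is non-symmetric. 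It is also not a Riemannian product, since the horizontal distribution is non-integrable.

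Theorem~B then applies directly: the base $\C\mathsf{P}^n$ is an irreducible Hermitian symmetric space that is not a real Grassmannian of $2$-planes, except for the overlap $\C\mathsf{P}^1=\mathsf{S}^2$, where $n=1$. For that case, and in general, I would instead invoke Proposition~\ref{prop:s1bundlelow}. It gives $i_{\mathfrak{s}}\ge1$ with a transvection tangent to the fiber, and Theorem~\ref{th:2-isotropy_fibration} forces $\mathfrak{s}$ to be one of the two modules. I would rule out $\mathfrak{s}=\mathcal{H}$ by noting that the transitive group of Killing fields already realizes the vertical leaf of symmetry. For $n=1$ (Berger $\mathsf{S}^3$) I would treat the case directly, since $\mathfrak{m}_1$ is two-dimensional; the argument in Proposition~\ref{prop:s1bundlelow} still works because $\mathrm{End}_{\mathsf{H}}$ is $\C$.

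\emph{Quaternionic and octonionic cases.} Here $\Isom^0$ is $\mathsf{Sp}_{n+1}\mathsf{Sp}_1/(\mathsf{Sp}_n\mathsf{Sp}_1)$, respectively $\mathsf{Spin}_9/\mathsf{Spin}_7$. The isotropy splits as $\mathfrak{m}_1\oplus\mathfrak{m}_2$: the vertical module is $\mathsf{Sp}_1$ acting on $\mathrm{Im}\H$, respectively $\mathsf{Spin}_7$ acting on $\R^7$ via the vector representation, and the horizontal module is $\H^n$, respectively the spin representation $\R^8$. These two modules are inequivalent and irreducible, so Theorem~\ref{th:2-isotropy_fibration} applies. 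Hence $\mathfrak{s}$ is trivial, vertical, or horizontal.

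\begin{itemize}
\item \emph{Vertical.} The fibers are $\mathsf{S}^3=\mathsf{Sp}_1\mathsf{Sp}_1/\mathsf{Sp}_1$ and $\mathsf{S}^7=\mathsf{Spin}_8/\mathsf{Spin}_7$. Both come from symmetric pairs, and $\mathfrak{m}_1$ is irreducible. Proposition~\ref{prop:verticaltrans} and Proposition~\ref{prop:verticaltransconversedimge2}, packaged as Corollary~\ref{cor:indnotextension}, then show that the fiber is the leaf of symmetry exactly when the metric is $\langle\cdot,\cdot\rangle_2$.
\item \emph{Translating the parameter.} In terms of $\tau$ via Remark~\ref{rem:normalhomtau}, the normal metric is $\tau=\tfrac12$, respectively $\tfrac14$, with $\lambda=2\tau$, respectively $4\tau$. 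So $\lambda=2$ corresponds to $\tau=1$, which is excluded as symmetric.
\end{itemize}

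This mismatch with the claimed values $\tau=\tfrac12$ is the main obstacle. I would first recheck the normalization of $b$ restricted to the fiber directions against the full group $\Isom^0$. The intended fibration is presumably with $\mathsf{K}$ the stabilizer in $\Isom^0$, so $\mathsf{K}=\mathsf{Sp}_n\mathsf{Sp}_1\mathsf{Sp}_1$ and the fiber is $\mathsf{S}^3=(\mathsf{Sp}_1\times\mathsf{Sp}_1)/\Delta$. Its normal metric relative to the background $b$ of $\mathsf{Sp}_{n+1}\oplus\mathsf{Sp}_1$ has a different scale, and the special parameter should land at $\tau=\tfrac12$.

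\emph{Horizontal.} It remains to exclude $\mathfrak{s}=\mathcal{H}$. If $\mathcal{H}$ were the distribution of symmetry, then by Theorem~\ref{th:2-isotropy_fibration}(iii) it would be integrable, giving a closed intermediate subgroup with $\mathfrak{m}_2$ as the tangent space of a fiber. But $[\mathfrak{m}_2,\mathfrak{m}_2]_{\mathfrak{m}_1}\neq0$ because the base is irreducible symmetric, a contradiction.

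For all remaining $\tau$, Corollary~\ref{cor:indnotextension} then gives index $0$, and the values $3$ and $7$ are the fiber dimensions.
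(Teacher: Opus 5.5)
Your complex case follows the paper's route through Proposition~\ref{prop:s1bundlelow}. The one caveat there is $n=1$: $\mathrm{End}_{\mathsf{H}}\cong\C$ requires $\mathsf{H}=\mathsf{U}_1\subset\mathsf{U}_2$, and then the simplicity used to exclude $v^*=aV$ is lost, so that case does need a direct check. The genuine gap is where the theorem has its content. You never establish which $\tau$ is the distinguished metric $\langle\cdot,\cdot\rangle_2$; you only say it ``should land at $\tau=\tfrac12$''. In the octonionic case there is actually no obstacle, only an arithmetic slip. Remark~\ref{rem:normalhomtau} gives $\lambda=4\tau$, so $\lambda=2$ means $\tau=\tfrac12$, not $\tau=1$. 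With that correction your octonionic argument is the paper's: Propositions~\ref{prop:verticaltrans} and~\ref{prop:verticaltransconversedimge2} with $\mathrm{Isom}^0=\mathsf{Spin}_9$, where simplicity of $\mathfrak{spin}_8$ makes the $b$-orthogonal complement of $\mathfrak{spin}_7$ the symmetric one. In the quaternionic case, the relation $\lambda=2\tau$ belongs to the presentation $\mathsf{Sp}_{n+1}/\mathsf{Sp}_n$, where Corollary~\ref{cor:indnotextension} does not apply: the vertical module is a $3$-dimensional trivial module, and the fiber is the group $\mathsf{Sp}_1$ rather than a symmetric pair.

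For $\mathsf{Sp}_{n+1}\mathsf{Sp}_1/\mathsf{Sp}_n\mathsf{Sp}_1$, rechecking a normalization cannot settle the question. The algebra $\mathfrak{sp}_{n+1}\oplus\mathfrak{sp}_1$ is not simple, so the background inner product is $b\oplus c\,b'$ with a free weight $c>0$, where $b'(y,y)=-\tfrac12\mathrm{Re}(y^2)$. With the action $(X,y)\cdot p=Xp-py$ one finds $\mathfrak{m}_1=\{(\mathrm{diag}(0,-cy),y)\}$. Then $\langle\cdot,\cdot\rangle_\lambda$ is $\mathsf{S}^{4n+3}_{\H,\tau}$ with $\tau=\lambda c/(2(1+c))$, so ``$\lambda=2$'' is $\tau=c/(1+c)$, which can be any value in $(0,1)$.

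The missing idea is that both propositions need $\mathfrak{m}_1$ to be the $(-1)$-eigenspace of the involution of $(\mathsf{Sp}_1\times\mathsf{Sp}_1,\Delta\mathsf{Sp}_1)$. Proposition~\ref{prop:verticaltrans} needs elements of $\mathfrak{m}_1$ to restrict to fiber transvections, and the proof of Proposition~\ref{prop:verticaltransconversedimge2} uses that $\xi^*_{\mathfrak{m}_1}$ does. This forces $c=1$, giving $\tau=\lambda/4$ and hence $\lambda=2\iff\tau=\tfrac12$. For $c\neq1$ the conclusion of Proposition~\ref{prop:verticaltransconversedimge2} genuinely fails, since the special metric $\tau=\tfrac12$ then sits at $\lambda=(1+c)/c$.

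Once this is supplied, your route is a valid alternative to the paper's. The paper instead uses Ziller's naturally reductive decompositions, valid for every $\tau$. The transvection group is then either $\mathsf{Sp}_{n+1}\mathsf{Sp}_1$, whose isotropy fixes no nonzero vector (index $0$), or $\mathsf{Sp}_{n+1}$. In the latter case, uniqueness of the naturally reductive metric on $\mathsf{Sp}_{n+1}/\mathsf{Sp}_n$ forces the normal one, $\tau=\tfrac12$.
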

	\begin{proof}
		
		First of all, let us consider the case of  $\mathsf{S}^{2n+1}_{\C,\tau}$. Notice that the isotropy representation splits into two irreducible modules, one of them of dimension one. Moreover, for every $\tau\neq 1$, $\mathsf{S}^{2n+1}_{\C,\tau}$ is not locally symmetric or locally isometric to a Riemannian product and $\mathrm{Isom}^0(\mathsf{S}^{2n+1}_{\C,\tau})=\mathsf{U}_{n+1}$, see Lemma~\ref{lem:Isom_Hopf_Berger}. Thus,  Proposition~\ref{prop:s1bundlelow}, implies that $i_{\mathfrak{s}}(\mathsf{S}^{2n+1}_{\C,\tau})=1$ for every $\tau\neq1$.

		Secondly, let us consider the case of $\mathsf{S}^{4n+3}_{\H,\tau}$. By~\cite[Theorem~3]{ziller-1977}, for each $\tau>0$, the Riemannian manifold $\mathsf{S}^{4n+3}_{\H,\tau}$ admits a naturally reductive decomposition $\mathfrak{g}=\mathfrak{h}\oplus\mathfrak{m}_\tau$ with respect to the presentation $\mathsf{G}/\mathsf{H}=\mathsf {Sp}_{n+1}\mathsf{Sp}_1/\mathsf{Sp}_{n}\mathsf{Sp}_1$. The Lie algebra of transvections with respect to $\nabla^c$ is given by $\tilde{\mathfrak{g}}=\mathfrak{m}_\tau\oplus[\mathfrak{m}_\tau,\mathfrak{m}_\tau]$, see~\cite[Theorem I.25]{kowalski}. Notice that this reductive decomposition is also naturally reductive for each $\tau>0$.
		The connected Lie group $\widetilde{\mathsf{G}}$ associated with the Lie algebra generated by the transvections with respect to $\nabla^c$ is a transitive normal connected subgroup  of   $\mathsf{Sp}_{n+1}\mathsf{Sp}_1$, see~\cite[Theorem I.25]{kowalski}. Then, it must be either   
		$\widetilde{\mathsf{G}}= \mathsf {Sp}_{n+1}\mathsf{Sp}_1$ or 
		$\widetilde{\mathsf{G}}=\mathsf{Sp}_{n+1}$. In the first case, the isotropy does not fix any non-zero vector, and thus $i_\mathfrak{s} (\mathsf{S}^{4n+3}_{\H,\tau})=0$. In the latter case, the  space is also naturally reductive with respect to the presentation $\mathsf{Sp}_{n+1}/ \mathsf{Sp}_{n}$. 
		However, we claim that the only naturally reductive metric is the normal homogeneous metric, up to scaling. Assume that there are two values $\tau_1>0$ and $\tau_2>0$, such that $\tilde{\mathfrak{g}}=\mathfrak{m}_{\tau_1}\oplus[\mathfrak{m}_{\tau_1},\mathfrak{m}_{\tau_1}]$ and $\tilde{\mathfrak{g}}=\mathfrak{m}_{\tau_2}\oplus[\mathfrak{m}_{\tau_2},\mathfrak{m}_{\tau_2}]$ are both naturally reductive decompositions.  Let us consider the Levi-Civita connection $\nabla^{\tau_i}$ of $\mathsf{S}^{4n+3}_{\H,\tau_i}$ for each $i\in\{1,2\}$. Observe that the tensor $\Theta$ given by $\Theta(X,Y)=\nabla_X^{\tau_1}Y-\nabla_X^{\tau_2} Y$ is symmetric as
		\begin{equation}
			\label{eq:Thetasym}
			\Theta(X,Y)=\nabla_X^{\tau_1}Y-\nabla_X^{\tau_2} Y = \nabla_Y^{\tau_1} X + [X,Y] - (\nabla_Y^{\tau_2} X + [X,Y]) = \nabla_Y^{\tau_1} X - \nabla_Y^{\tau_2} X = \Theta(Y,X), 
		\end{equation}
		since $\nabla^{\tau_i}$ is torsion-free for each $i\in\{1,2\}$. Moreover, given that $\widetilde{\mathsf{G}}= \mathsf {Sp}_{n+1}$ is simple, and $\widetilde{\mathfrak{g}}=\mathfrak{m}_1\oplus\mathfrak{m}_2\oplus\widetilde{\mathfrak{h}}$ as $\widetilde{\mathsf{H}}$-modules, we deduce that there exists a unique canonical connection. This implies that the geodesics of $\mathsf{S}^{4n+3}_{\H,\tau_1}$ and $\mathsf{S}^{4n+3}_{\H,\tau_2}$ are the same, and thus $\Theta$ is skew-symmetric. As $\Theta$ is symmetric by Equation~\eqref{eq:Thetasym}, we deduce that $\Theta=0$, and equivalently  $\nabla^{\tau_1}=\nabla^{\tau_2}$. Thus, the associated holonomy groups are equal, and since $\mathsf{S}^{4n+3}_{\H,\tau}$ has to be holonomy-irreducible, as it is simply connected and not diffeomorphic to a product, we deduce that the metrics are isometric, and therefore, $\tau_1=\tau_2$. As the normal homogeneous metric on $\mathsf{S}^{4n+3}_{\H,\tau}=\widetilde{\mathsf{G}}/\widetilde{\mathsf{H}}$ happens when $\tau=\tfrac{1}{2}$, see Remark~\ref{rem:normalhomtau}, we deduce that $i_{\mathfrak{s}}(\mathsf{S}^{4n+3}_{\H,\tau})=3$ if and only if $\tau=\tfrac{1}{2}$ and $0$  when $\tau\not\in\{1, \tfrac{1}{2}\}$.
		
		Thirdly, let us consider the case of $\mathsf{S}^{15}_{\mathbb{O},\tau}$. Notice that the isotropy representation splits into two irreducible inequivalent modules, and the vertical distribution has rank $7$. Moreover, for every $\tau\neq 1$, $\mathsf{S}^{15}_{\mathbb{O},\tau}$ is not locally symmetric or isometric to a Riemannian product and $\mathrm{Isom}^0(\mathsf{S}^{15}_{\mathbb{O},\tau})=\mathsf{Spin}_9$, see Lemma~\ref{lem:Isom_Hopf_Berger}. Thus,  Proposition~\ref{prop:verticaltransconversedimge2} implies that we can extend the vertical transvections to $\mathsf{S}^{15}_{\mathbb{O},\tau}$ if and only if $\tau=\tfrac{1}{2}$, which corresponds to the metric $\langle\cdot,\cdot\rangle$ with $\lambda=2$, see~Remark~\ref{rem:normalhomtau}. Consequently, $i_{\mathfrak{s}}(\mathsf{S}^{15}_{\mathbb{O},\tau})=7$ if $\tau=\tfrac{1}{2}$ and $0$ for every $\tau\not\in\{1, \tfrac{1}{2}\}$.
	\end{proof}
	
	Now we will study the index of symmetry of the Riemannian manifold $\mathsf{S}^{4n+3}_{\tau_1,\tau_2,\tau_3}$. Recall that this is the total space of the homogeneous fibration induced by $$\mathsf{H}=\mathsf{Sp}_n<\mathsf{K}=\mathsf{Sp}_n\mathsf{Sp}_1<\mathsf{G}=\mathsf{Sp}_{n+1}.$$
	Following the notation in \S~\ref{subsubsec:Spn-inv}, we consider the reductive decomposition $\mathfrak{g}=\mathfrak{h}\oplus\mathfrak{m}$, with $\mathfrak{m}:=\mathfrak{m}_0\oplus\mathfrak{m}_1$, where
	\begin{equation} \label{eq::red-desc-spn}
		\mathfrak{h}=\left(
		\begin{array}{c|c}
			Z & 0 \\
			\hline
			0 & 0
		\end{array}
		\right), \quad \mathfrak{m}_0=\left(
		\begin{array}{c|c}
			0 & 0 \\
			\hline
			0 & \mathrm{Im}(x)
		\end{array}
		\right), \quad \mathfrak{m}_1=\left(
		\begin{array}{c|c}
			0 & v \\
			\hline
			-v^* & 0
		\end{array}
		\right),
	\end{equation}
	where $x\in\H$, $v\in\H^n$ and $Z$ belongs to $\mathfrak{sp}_n$. Recall that we define $X_1$, $X_2$ and $X_3$ as the vertical Killing vector fields induced by vectors of $\mathfrak{m}_0$, with $x=i$, $x=j$ and $x=k$, respectively. Then, we express the metric of $\mathsf{S}^{4n+3}_{\tau_1,\tau_2,\tau_3}$ at the base point $o$ as
	\[
	\langle \cdot, \cdot \rangle_{\tau_1,\tau_2,\tau_3}= 2 \sum_{i=1}^{3}\tau_i b(\cdot,\cdot)_{\rvert \R X_i \times \R X_i}+ b(\cdot,\cdot)_{\rvert\mathfrak{m}_1\times\mathfrak{m}_1},
	\]
	where $b(X,Y)=-\tfrac{1}{2}\mathrm{Re} \tr_{\H}(XY)$, with $X,Y\in\mathfrak{g}$.
	
	The following lemma implies that the index of symmetry of $\mathsf{S}^{4n+3}_{\tau_1,\tau_2,\tau_3}$ is bounded by the index of symmetry of the fiber which is isometric to $\mathsf{S}^3_{\tau_1,\tau_2,\tau_3}$.
	
	\begin{lemma}
		\label{lem:transvS3red}
		Let $X$ be an infinitesimal transvection of $\mathsf{S}^{4n+3}_{\tau_1,\tau_2,\tau_3}$ at $o$. Then, the restriction of $X$ to the fiber $\mathsf{S}^3_{\tau_1,\tau_2,\tau_3}$ is an infinitesimal transvection of the fiber $\mathsf{S}^3_{\tau_1,\tau_2,\tau_3}$ at $o$. In particular,
		\begin{enumerate}
			\item[\textup{(i)}] $i_{\mathfrak{s}}(\mathsf{S}^{4n+3}_{\tau_1,\tau_2,\tau_3})=3$ if and only if $\tau_1=\tau_2=\tau_3\neq 1$;
			\item[\textup{(ii)}] $i_{\mathfrak{s}}(\mathsf{S}^{4n+3}_{\tau_1,\tau_2,\tau_3})\leq 1$ otherwise.
		\end{enumerate} 
	\end{lemma}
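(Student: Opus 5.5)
The plan is to prove the main assertion first and then read off (i) and (ii) from the index of symmetry of the fiber $\mathsf{S}^3_{\tau_1,\tau_2,\tau_3}$. For that assertion I will not use Lemma~\ref{lemma:tghomfib}, because $\langle\cdot,\cdot\rangle_{\tau_1,\tau_2,\tau_3}$ is not a canonical variation when the $\tau_i$ differ. Instead I use that $\mathsf{H}=\mathsf{Sp}_n$ acts trivially on $\mathfrak{m}_0$ and irreducibly on $\mathfrak{m}_1$ (for $n\ge1$ it has no fixed vectors there). Hence the fiber $\mathsf{Sp}_1\cdot o$ through $o$ is the connected component through $o$ of the fixed point set of $\mathsf{H}$. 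It is therefore totally geodesic for every $\mathsf{Sp}_{n+1}$--invariant metric.

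Let $X$ be Killing on $M=\mathsf{S}^{4n+3}_{\tau_1,\tau_2,\tau_3}$ with $(\nabla X)_o=0$, and let $N$ be this fiber. As in the proof of Lemma~\ref{lemma:jacobisym}, the tangential part $X^{TN}$ is a Killing field of $N$. Since $N$ is totally geodesic, for $u\in T_oN$ we have $\nabla^N_u X^{TN}=(\nabla_u X)^{TN}=0$. So $X^{TN}$ is an infinitesimal transvection of $N$ at $o$, with value the vertical part of $X_o$.

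To turn this into a bound I must show $\mathfrak{s}_o\subset\mathfrak{m}_0$, so that no information is lost in projecting. The subspace $\mathfrak{s}_o$ is $\mathsf{H}$--invariant by~\eqref{eq:sym_isotropy}, so $\mathfrak{s}_o=W\oplus W'$ with $W\subset\mathfrak{m}_0$ and $W'\in\{0,\mathfrak{m}_1\}$. Suppose $W'=\mathfrak{m}_1$. The distribution of symmetry is integrable and $\mathsf{G}$--invariant, while $[\mathfrak{m}_1,\mathfrak{m}_1]_{\mathfrak{m}_0}=\mathfrak{m}_0$ (the quaternionic Hopf horizontal distribution is bracket generating). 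Hence $\mathfrak{s}$ would be all of $\mathfrak{m}$, and the metric would be symmetric, i.e.\ round, which is excluded here.

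Thus $\mathfrak{s}_o\subset\mathfrak{m}_0=T_oN$, and $X\mapsto X^{TN}$ embeds $\mathfrak{s}_o$ into the distribution of symmetry of $N$ at $o$. By homogeneity this gives
\[
i_{\mathfrak{s}}(\mathsf{S}^{4n+3}_{\tau_1,\tau_2,\tau_3})\le i_{\mathfrak{s}}(\mathsf{S}^3_{\tau_1,\tau_2,\tau_3}).
\]

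It remains to analyse the fiber, a left-invariant metric on $\mathsf{Sp}_1=\mathsf{S}^3$ with principal values $\tau_1\ge\tau_2\ge\tau_3$. It is symmetric (round) exactly when $\tau_1=\tau_2=\tau_3$, and then its index is $3$. Otherwise it is non-symmetric of dimension $3$. Index $3$ is impossible, and index $2$ would mean coindex $1$, which is excluded by \cite[Theorem~5.3]{BOR}. So its index is at most $1$, which gives (ii) and the necessity in (i).

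For the converse in (i), take $\tau_1=\tau_2=\tau_3=\tau\neq1$. The fiber is a round $\mathsf{S}^3$, and $\mathfrak{m}_0$ is an irreducible module for the isotropy $\mathsf{Sp}_n\mathsf{Sp}_1$ of $\Isom^0=\mathsf{Sp}_{n+1}\mathsf{Sp}_1$ (Lemma~\ref{Lemma::Isometry_Group_Spheres}(ii)). So $\mathfrak{s}_o$ is either $0$ or $\mathfrak{m}_0$, and the index lies in $\{0,3\}$. Here I expect the main obstacle. The restriction argument only gives the upper bound $3$. Realizing the value $3$ requires extending the vertical transvections of the round fiber to the total space, and Proposition~\ref{prop:verticaltrans} does this only for $\lambda=2$, i.e.\ $\tau=\tfrac12$ by Remark~\ref{rem:normalhomtau}. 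The naturally reductive argument in the proof of Theorem~\ref{th:hopfbergerindex} shows the value $0$ for every other $\tau$.

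So what the method establishes for (i) is the following: equality of the $\tau_i$ is exactly the condition under which the fiber bound equals $3$, and index $3$ can occur only in that case. Attainment is then decided by Theorem~\ref{th:hopfbergerindex}, and only at $\tau=\tfrac12$. This is consistent with Theorem~A, and I would state (i) in that precise form, since attainment for every $\tau\neq1$ would contradict Theorem~\ref{th:hopfbergerindex}.
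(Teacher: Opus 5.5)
Your proposal is correct. For the main assertion it follows the paper's argument:
\begin{itemize}
\item the fiber is the fixed-point component of $\mathsf{H}=\mathsf{Sp}_n$, hence totally geodesic;
\item $\mathfrak{s}\subset\mathcal{V}$ because the horizontal distribution is bracket generating;
\item the Gauss formula passes $(\nabla X)_o=0$ to the fiber.
\end{itemize}
Your use of the tangential part $X^{TN}$ is a little more careful than the paper's wording.

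The routes differ for (i)--(ii). The paper argues with the leaf of symmetry rather than with the index of the fiber. Since $L(o)$ lies in the fiber, a $2$-dimensional leaf would be a totally geodesic surface of $\mathsf{S}^3_{\tau_1,\tau_2,\tau_3}$, which by \cite[Theorem~7.2]{tsukada} forces the round metric. A $3$-dimensional leaf would make the fiber itself symmetric. You instead derive $i_{\mathfrak{s}}(\mathsf{S}^{4n+3}_{\tau_1,\tau_2,\tau_3})\le i_{\mathfrak{s}}(\mathsf{S}^{3}_{\tau_1,\tau_2,\tau_3})$, and you exclude index $2$ on the fiber by the co-index-one theorem \cite[Theorem~5.3]{BOR}. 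Your version makes the ``in particular'' an honest corollary of the first assertion, and it is the same reduction used later in Theorem~\ref{th:sph3param}. The paper's route replaces the co-index result with Tsukada's classification of totally geodesic surfaces.

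You are also right about (i). The paper's proof only yields necessity, and the ``if'' direction is false as stated. Indeed, $\tau_1=\tau_2=\tau_3=\tau\notin\{\tfrac12,1\}$ gives $\mathsf{S}^{4n+3}_{\mathbb{H},\tau}$, which has index $0$ by Theorem~\ref{th:hopfbergerindex}.

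This is easy to confirm directly. For $\tau\neq1$ the Killing algebra is $\mathfrak{sp}_{n+1}\oplus\mathfrak{sp}_1$. Let $\hat q\in\mathfrak{m}_0$ be the corner element with entry $q\in\mathrm{Im}\,\mathbb{H}$, and let $R_q\colon p\mapsto pq$ be the right Hopf field. The Killing fields with value $\hat q^*_o$ whose restriction to the fiber is a transvection are exactly $\tfrac12(\hat q^*+R_q)+Z^*$ with $Z\in\mathfrak{sp}_n$. Their horizontal covariant derivative at $o$ vanishes iff $2Z=(1-2\tau)\rho_q$ on $\mathbb{H}^n$, where $\rho_q$ is right multiplication by $q$. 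Since $Z$ is $\mathbb{H}$-linear and $\rho_q$ is not, this forces $\tau=\tfrac12$ and $Z=0$.

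There is one slip in your last paragraph. For $\mathsf{Sp}_{n+1}/\mathsf{Sp}_n$, Remark~\ref{rem:normalhomtau} gives $\lambda=2\tau$. So $\tau=\tfrac12$ is the normal homogeneous metric $\lambda=1$, while $\lambda=2$ is the round metric. The value $\tau=\tfrac12$ corresponds to $\lambda=2$ only in the presentation $\mathsf{Sp}_{n+1}\mathsf{Sp}_1/\mathsf{Sp}_n\mathsf{Sp}_1$, with fiber $(\mathsf{Sp}_1\times\mathsf{Sp}_1)/\Delta\mathsf{Sp}_1$ and both $\mathfrak{sp}_1$ factors scaled equally. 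That is the setting where Proposition~\ref{prop:verticaltrans} actually applies.
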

	\begin{proof}
		Let us denote the distribution of symmetry of $\mathsf{S}^{4n+3}_{\tau_1,\tau_2,\tau_3}$ by  $\mathfrak{s}$. Observe that $\Sp_n$ acts  trivially on $\mathfrak{m}_0$,  irreducibly on $\mathfrak{m}_1$ and the horizontal distribution   $\mathcal H$, which is identified with $\mathfrak{m}_1$ is completely non-integrable. Then $\mathfrak s\subset \mathcal V$. Equivalently, every infinitesimal transvection of $\mathsf{S}^{4n+3}_{\tau_1,\tau_2,\tau_3}$  at $o$ is tangent to the fiber $\mathsf{S}^3_{\tau_1,\tau_2,\tau_3}$.   By \cite[Lemma~A.1]{puttmann} note that the connected component passing through $o$ of the set of points fixed by $\mathsf{H}=\mathsf{Sp}_n$ is the orbit of the Lie group $\mathsf{N}_{\mathsf{Sp}_{n+1}}(\mathsf{Sp}_n)=\mathsf{Sp}_n\mathsf{Sp}_1=\mathsf{K}$, which is precisely the fiber $\mathsf{S}^{3}_{\tau_1,\tau_2,\tau_3}$. Thus, the fiber $\mathsf{S}^{3}_{\tau_1,\tau_2,\tau_3}$ is a totally geodesic submanifold of $\mathsf{S}^{4n+3}_{\tau_1,\tau_2,\tau_3}$, and Gauss formula implies that if $X\in\mathfrak{s}$, then $X$ is also an infinitesimal transvection of $\mathsf{S}^{3}_{\tau_1,\tau_2,\tau_3}$ at~$o$. 
		
		Finally, as the integral leaves of $\mathfrak{s}$ are totally geodesic and $\mathsf{S}^3_{\tau_1,\tau_2,\tau_3}$ admits totally geodesic surfaces if and only if it carries a round metric (see for instance \cite[Theorem~7.2]{tsukada}), we deduce that $i_{\mathfrak{s}}(\mathsf{S}^{4n+3}_{\tau_1,\tau_2,\tau_3})=3$ if and only if $\tau_1=\tau_2=\tau_3 \neq 1$, and $i_{\mathfrak{s}}(\mathsf{S}^{4n+3}_{\tau_1,\tau_2,\tau_3})\leq1$ in any other case.
	\end{proof}

	Now we will direct our attention to the case when $\tau_1>\tau_2>\tau_3>0$. 
	\begin{theorem}
		\label{th:sph3param}
		Let $M$ be isometric to $\mathsf{S}^{4n+3}_{\tau_1,\tau_2,\tau_3}$ with  $\tau_1>\tau_2>\tau_3>0$. Then, 
		\[\begin{aligned}
			i_{\mathfrak{s}}(M) =  \left\{ \begin{array}{ll}
				1 \quad &\textrm{if $\tau_{1}=1$ and $\tau_{2}=1-\tau_{3}$,} \\ \smallskip
				0 \quad &\textrm{otherwise.} \\
			\end{array} \right.
		\end{aligned}  \] 
	\end{theorem}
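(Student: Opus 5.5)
By Lemma~\ref{Lemma::Isometry_Group_Spheres}(v), when $\tau_1>\tau_2>\tau_3$ the identity component of the isometry group is $\mathsf{Sp}_{n+1}$, so every Killing field is $X^*$ with $X\in\mathfrak{sp}_{n+1}$. By Lemma~\ref{lem:transvS3red}, the index is at most $1$, and $\mathfrak{s}_o\subset\mathcal V_o\cong\mathfrak{m}_0$. Hence a candidate transvection at $o$ has the form $X=\xi+Z$, with $\xi=a_1X_1+a_2X_2+a_3X_3\in\mathfrak{m}_0$ and $Z\in\mathfrak{h}=\mathfrak{sp}_n$. I will write down the linear conditions on $(a_1,a_2,a_3,Z)$ for $(\nabla X^*)_o=0$ and solve them. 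Since $(\nabla Z^*)_o=-\ad(Z)|_{\mathfrak m}$ (by~\eqref{eq:magicolmos}, as $Z^*_o=0$), I split $(\nabla X^*)_o$ into three blocks: $\mathfrak{m}_0\to\mathfrak{m}_0$, $\mathfrak{m}_0\leftrightarrow\mathfrak{m}_1$, and $\mathfrak{m}_1\to\mathfrak{m}_1$.

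\emph{Mixed block.} The fiber is totally geodesic. Therefore the mixed block vanishes for $\xi^*$, exactly as in the first step of the proof of Proposition~\ref{prop:verticaltrans}. It also vanishes for $Z^*$, because $\ad(Z)$ preserves $\mathfrak{m}_0$ and $\mathfrak{m}_1$.

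\emph{Vertical block.} $Z$ acts trivially on $\mathfrak{m}_0$, so this block only sees $\xi$. Restricting to the fiber $\mathsf{S}^3_{\tau_1,\tau_2,\tau_3}$, a left-invariant metric on $\mathsf{Sp}_1$, formula~\eqref{eq:L-C} gives Milnor-type coefficients. For instance, $\langle\nabla_{X_2^*}X_1^*,X_3^*\rangle$ is proportional to $\tau_2+\tau_3-\tau_1$, and the cyclic permutations give the analogous expressions. With pairwise distinct $\tau_i$, I expect this block to force $\xi$ to be a multiple of a single $X_i$. Moreover, $X_i$ survives only if $\tau_i=\tau_{i+1}+\tau_{i+2}$, and the ordering $\tau_1>\tau_2>\tau_3$ leaves only $i=1$ with $\tau_1=\tau_2+\tau_3$.

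\emph{Horizontal block.} For $v,w\in\mathfrak{m}_1$, I repeat the computation~\eqref{eq:LClambda} with the vertical scaling $2\tau_i$ in place of $\lambda$. This gives
\[
2\langle\nabla_{v^*}X_i^*,w^*\rangle=(2-2\tau_i)\,b([v,w],X_i),
\]
so $(\nabla X_i^*)_o|_{\mathfrak m_1}$ is $(1-\tau_i)$ times right multiplication by the corresponding imaginary unit on $\H^n$. On the other hand, $\ad(Z)|_{\mathfrak m_1}$ is left multiplication by a matrix in $\mathfrak{sp}_n$, and it commutes with right scalar multiplications. The key claim is that a nonzero right multiplication by an imaginary quaternion cannot equal such a left action. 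Granting this, $Z$ acts trivially on $\mathfrak m_1$, hence $Z=0$ by effectiveness, and we obtain $\tau_1=1$. Combining the two conditions gives $\tau_1=1$ and $\tau_2=1-\tau_3$, in which case $X_1^*$ is a transvection and $i_{\mathfrak s}=1$. In all other cases $\mathfrak{s}_o=0$, and homogeneity gives $i_{\mathfrak s}=0$.

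The main obstacle is the horizontal block: getting the conventions and signs right, and rigorously excluding cancellation by $\ad(Z)$. I would first try the centralizer argument above. If it fails, I would use $\mathsf{Sp}_n$-equivariance instead: $(\nabla X^*)_o|_{\mathfrak m_1}$ must be a skew $\mathsf H$-intertwiner. Then I would compare the image of $\mathfrak h$ in $\mathfrak{so}(\mathfrak m_1)$ with the intertwiners $\mathrm{End}_{\mathsf H}(\mathfrak m_1)\cong\H$, and check that these two subalgebras meet trivially.
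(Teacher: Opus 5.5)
Your proposal is correct and follows essentially the paper's route. You reduce to $X=\xi+Z$ with $\xi\in\mathfrak{m}_0$ and $Z\in\mathfrak{sp}_n$, use the totally geodesic fiber $\mathsf{S}^3_{\tau_1,\tau_2,\tau_3}$ to force $\xi\in\R X_1$ and $\tau_1=\tau_2+\tau_3$, and then show that the horizontal term $(1-\tau_1)\,b([v,w],X_1)$ cannot be cancelled by $\ad(Z)$. The differences are only in presentation: you compute the Milnor-type fiber coefficients directly rather than quoting them, and your observation that left $\mathfrak{sp}_n$-multiplication commutes with right quaternionic scalars while right multiplication by $i$ does not is the paper's $\mathsf{Z}_{\mathsf{Sp}_{n+1}}(\mathsf{H})\cong\mathsf{Sp}_1$ argument phrased linear-algebraically.
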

	\begin{proof}
		First of all, the infinitesimal transvections  of $\mathsf{S}^3_{\tau_1,\tau_2,\tau_3}$ are trivial when $\tau_1\neq\tau_2+\tau_3$, and are spanned by $X_1^*$ if and only if $\tau_1=\tau_2+\tau_3$, see~\cite[p.~35]{ORT}.
		Thus, by Lemma~\ref{lem:transvS3red},  	$i_{\mathfrak{s}}(\mathsf{S}^{4n+3}_{\tau_1,\tau_2,\tau_3})\leq1$ if $\tau_1=\tau_2+\tau_3$, and  $i_{\mathfrak{s}}(\mathsf{S}^{4n+3}_{\tau_1,\tau_2,\tau_3})=0$, otherwise. Thus, we will assume that $\tau_1=\tau_2+\tau_3$.
		
		Let $X\in\g{g}$ with $X^{*}_o=(X^*_1)_o$ and such that $(\nabla X^*)_o$ and $(\nabla X_1^*)_o$ are linear null endomorphisms of $T_o \mathsf{S}^{3}_{\tau_1,\tau_2,\tau_3}$.  The difference between $X^{*}$ and $X^*_1$ is a Killing vector field whose value at $o\in\mathsf{S}^{4n+3}$ is zero. Then,  there is $Z\in\mathfrak{h}=\mathfrak{sp}_n$ such that 
		\begin{equation}
			\label{eq:Zdef}
			X^*=X^*_1 + Z^*, \qquad (\nabla Z^*)_o=-(\nabla X_1^*)_o. 
		\end{equation}
		Let $v,w\in\mathfrak{m}_1$, then using Equation~\eqref{eq:L-C}, we have 
		\begin{equation}
			\label{eq:Xtrans}
			\begin{aligned}
				2\langle (\nabla_{v^*_{o}} X^*)_o, w^*_{o}\rangle&=\langle [v^{*},X^{*}]_{o},w^{*}_{o}\rangle + \langle [X^{*},w^{*}]_{o}, v^{*}_{o}\rangle + \langle [v^{*},w^{*}]_{o}, X^{*}_{o}\rangle\\
				&=-b([v,X],w) - b([X,w],v)-2\tau_1b([v,w],X_{1})\\
				&=-2(\tau_1-1) b([v,w],X_1) - 2 b([Z,w],v),
			\end{aligned}
		\end{equation}
		where we have used that $X=X_1+Z$ and that $b$ is $\Ad(\mathsf{G})$--invariant. By Equation~\eqref{eq:Xtrans}, $X^{*}$ is an infinitesimal transvection of $M$ at $o$ if and only if 
		\begin{equation}
			\label{eq:conservedqsp}
			b([Z,w],v)=(1-\tau_1)b([v,w],X_1) \qquad \text{for all $v,w\in\mathfrak{m}_1$}.
		\end{equation}
		Let $g\in\mathsf{Z}_{\mathsf{Sp}_{n+1}}(\mathsf{H})\cong\mathsf{Sp}_1$.Then, by Equation~\eqref{eq:conservedqsp}, we have
		\begin{equation}
			\label{eq:conservedqspg}
			\begin{aligned}
				b([Z,w],v)&=b(\Ad(g)[Z,w],\Ad(g)v)=b([Z,\Ad(g)w],\Ad(g)v)\\
				&=(1-\tau_1)b([\Ad(g)v,\Ad(g)w],X_1)=(1-\tau_1)b(\Ad(g)[v,w],X_1)\\
				&=(1-\tau_1)b([v,w],\Ad(g^{-1})X_1),
			\end{aligned}
		\end{equation} 
		where we have used that $Z\in\mathfrak{h}<\mathfrak{k}$, and $g\in\mathsf{Z}_{\mathsf{Sp}_{n+1}}(\mathsf{H})$.
		
		Now let us assume that $\tau_1\neq 1$. If there is a non-trivial infinitesimal transvection at $o$, by Lemma~\ref{lem:transvS3red}, it must be induced by a Killing vector field $X\in\g{g}$ satisfying the conditions in Equation~\eqref{eq:Zdef} and~\eqref{eq:conservedqspg}. On the one hand, we can always choose $g\in\mathsf{Sp}_1$ such that $b([v,w], \Ad(g^{-1}) X_1)=0$ as $\mathsf{Sp}_1$ acts transitively on the unit sphere of $\mathfrak{m}_0$. On the other hand, we can choose $w\in\mathfrak{m}_1$ such that $[Z,w]\neq 0$ since $\mathsf{H}$ acts faithfully on $\mathfrak{m}_1$. Then, by taking $v=[Z,w]$, the Equation~\eqref{eq:conservedqspg} implies
		\[0<b([Z,w],[Z,w])=b([v,w],\Ad(g^{-1}) X_1)=0\]
		yielding a contradiction with the fact that $\tau_1\neq1$.
		
		Now we assume that $\tau_1=1$. Then Equation~\eqref{eq:conservedqspg} implies that $b([Z,w],v)=0$ for every $v,w\in\mathfrak{m}_1$. However, as $\mathsf{H}$ acts faithfully on $\mathfrak{m}_1$, we deduce that $Z=0$, and by Equation~\eqref{eq:Zdef}, we prove that $X^*_1$ is an infinitesimal transvection of $\mathsf{S}^{4n+3}_{\tau_1,\tau_2,\tau_3}$.\qedhere
	\end{proof}

	\begin{remark}
		The metrics on the spheres $\mathsf{S}^{4n+3}_{\tau_{1},\tau_{2},\tau_{3}}$ with $\tau_1>\tau_2>\tau_3$ as described in the hypotheses of Theorem~\ref{th:sph3param} are not naturally reductive. Indeed, they are not even geodesic orbit spaces, i.e.\ their geodesics are not necessarily orbits. This can be seen as follows. If the ambient space were a geodesic orbit space, its totally geodesic fibers (which are isometric to $\mathsf{Sp}_{1}$) would necessarily inherit the geodesic orbit property. This holds because the subgroup of the full isometry group $\mathrm{Isom}^{0}(\mathsf{S}^{4n+3}_{\tau_{1},\tau_{2},\tau_{3}}) \cong \mathsf{Sp}_{n+1}$ that acts transitively on the fibers is precisely $\mathsf{Sp}_1$. 	Since $\mathsf{Sp}_{1}$ is a homogeneous space of dimension $3$, a result by Kowalski and Vanhecke~\cite{KoWa} implies that the geodesic orbit property is equivalent to natural reductivity. However, since $\mathsf{Sp}_{1}$ is a simple Lie group, its unique naturally reductive metric (up to scaling) with an isometry group locally isomorphic to $\mathsf{Sp}_{1}$ is the normal homogeneous metric induced by the Killing form. This corresponds exactly to the round metric on $\mathsf{S}^3$, which explicitly contradicts the fact that the induced metric on our fibers has three distinct parameters $\tau_1>\tau_2>\tau_3$.
	\end{remark}
	
	\begin{theorem}
		\label{th:sph2param}
		Let $M$ be isometric to $\mathsf{S}^{4n+3}_{\tau_1,\tau_2,\tau_3}$ with either $\tau_1>\tau=\tau_2=\tau_3\neq 1$ or $1\neq \tau_{1}=\tau_{2}=\tau>\tau_{3}$. Then,
		\[
		i_{\mathfrak{s}}(M) =  \left\{ \begin{array}{ll}
			1 \quad &\text{if } \tau=\frac{1}{2}, \\ \smallskip
			0 \quad &\text{otherwise.} \\
		\end{array} \right.
		\]
	\end{theorem}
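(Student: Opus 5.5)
By Lemma~\ref{lem:transvS3red}, $i_{\mathfrak{s}}(M)\le 1$ in both cases, since not all three parameters coincide. So it suffices to decide exactly when a single nonzero infinitesimal transvection exists at $o$. I will work with the presentation $M=\mathsf{G}(q)/\mathsf{H}(q)$, $\mathsf{G}(q)\cong\mathsf{Sp}_{n+1}\times\mathsf{U}_1$, with the reductive decomposition \eqref{eq:reddec2param}. I take $q=i$ in case (A) and $q=k$ in case (B), and use the inner products \eqref{eq:spnu1metric}. By Lemma~\ref{Lemma::Isometry_Group_Spheres}(iv) this group realizes $\Isom^0(M)$, so every Killing field is of the form $X^*$ with $X\in\mathfrak{g}(q)$. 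The two cases differ only by relabeling the quaternionic units, so I treat them simultaneously, with $\tau_\ast\in\{\tau_1,\tau_3\}$ denoting the distinct parameter.

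First I locate $\mathfrak{s}_o$. It is $\mathsf{H}(q)$--invariant and contained in the vertical space $\mathfrak{m}_0(q)\oplus\mathfrak{m}_1(q)$, by the argument of Lemma~\ref{lem:transvS3red}. The $\mathfrak{u}_1$--factor $\spann\{V(q)+W(q)\}$ of $\mathfrak{h}(q)$ rotates the $2$--plane $\mathfrak{m}_1(q)$ irreducibly and fixes $\mathfrak{m}_0(q)$. Since $\dim\mathfrak{s}_o\le1$, this forces $\mathfrak{s}_o\subset\mathfrak{m}_0(q)$. Hence a transvection must be $X^*$ with $X=\xi+Z$, where $\xi=V(q)-W(q)$ and $Z\in\mathfrak{h}(q)=\mathfrak{h}_1\oplus\mathfrak{h}_2(q)$, normalized so that $(\nabla X^*)_o=0$. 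I write $Z=Z_1+c\,(V(q)+W(q))$ with $Z_1\in\mathfrak{sp}_n$.

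Next I expand the condition $(\nabla X^*)_o=0$ using \eqref{eq:L-C} and the $\Ad$--invariance of $b$, exactly as in \eqref{eq:LClambda} and \eqref{eq:Xtrans}, split into the blocks of $\mathfrak{m}(q)=\mathfrak{m}_0(q)\oplus\mathfrak{m}_1(q)\oplus\mathfrak{m}_2$. The blocks internal to the fiber $\mathsf{S}^3_{\tau_1,\tau_2,\tau_3}$ (a Berger sphere) only fix the constant $c$ in terms of $\tau_\ast$ and $\tau$. This is the Berger-sphere analogue of Proposition~\ref{prop:s1bundlelow}: the left- and right-invariant vertical fields combine to a fiber transvection. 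The $\mathfrak{m}_2\times\mathfrak{m}_2$ block gives an identity of the type \eqref{eq:conservedqsp},
\[
b([Z_1,w],v)=\text{(constant depending on }c,\tau_\ast)\cdot b([v,w],\xi)\quad\text{for } v,w\in\mathfrak{m}_2 .
\]
I then run the averaging trick of \eqref{eq:conservedqspg} with the part of $\mathsf{Z}_{\mathsf{Sp}_{n+1}}(\mathsf{Sp}_n)\cong\mathsf{Sp}_1$ that commutes with the extra $\mathsf{U}_1$. Combined with faithfulness of $\mathfrak{sp}_n$ on $\mathfrak{m}_2$, this should force $Z_1=0$ and the constant to vanish. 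That determines $c$, and hence $\xi+c(V+W)$, consistently.

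The key remaining block is the mixed one, $\langle\nabla_{v^*}X^*,u^*\rangle$ with $v\in\mathfrak{m}_2$ and $u\in\mathfrak{m}_1(q)$ (together with $\langle\nabla_{u^*}X^*,v^*\rangle$). Computing it as in \eqref{eq:LClambda}, the three bracket terms pick up the weights $1$ (on $\mathfrak{m}_2$) and $2\tau$ (on $\mathfrak{m}_1(q)$). I expect the result to be proportional to $(1-2\tau)\,b([v,\xi],u)$, up to a contribution of $c$ that the previous step has already fixed. Because $[\mathfrak{m}_2,\xi]$ has a nonzero component pairing with $\mathfrak{m}_1(q)$ via $\mathfrak{m}_2$ (non-integrability of the horizontal distribution, as in Proposition~\ref{prop:verticaltransconversedimge2}), this vanishes identically iff $\tau=\tfrac12$, i.e. iff $\mathfrak{m}_1(q)$ and $\mathfrak{m}_2$ carry the same normal weight. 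The main obstacle is exactly this bookkeeping: tracking how $c$ (the $\mathfrak{u}_1$--component) enters the mixed block, and checking that the constraints from the three blocks are compatible precisely at $\tau=\tfrac12$ for every $\tau_\ast$, not only at special values. I will first verify the mixed block at $c=0$ and then redo it with $c$ general. Conversely, at $\tau=\tfrac12$ the same computation exhibits the transvection, giving $i_{\mathfrak{s}}(M)=1$; otherwise $i_{\mathfrak{s}}(M)=0$.
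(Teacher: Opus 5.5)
There is a genuine gap. The step you single out as decisive, the mixed block, imposes no condition at all: it vanishes identically for every $\tau$. Your $X=(V(q)-W(q))+c\,(V(q)+W(q))+Z_1$ lies in $\mathfrak{k}(q):=\mathfrak{h}(q)\oplus\mathfrak{m}_0(q)\oplus\mathfrak{m}_1(q)\cong\mathfrak{sp}_n\oplus\mathbb{R}V(q)\oplus\mathfrak{sp}_1$. Moreover $\mathfrak{m}_2=\mathfrak{k}(q)^{\perp}$, so $[\mathfrak{k}(q),\mathfrak{m}_2]\subset\mathfrak{m}_2$. Hence in \eqref{eq:L-C} each of the three terms of $2\langle\nabla_{v^*}X^*,u^*\rangle_o$, with $v\in\mathfrak{m}_2$ and $u$ vertical, pairs a horizontal vector with a vertical one. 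Equivalently, the fiber is totally geodesic (Lemma~\ref{lemma:tghomfib}) and $X^*$ is tangent to it; this is exactly the first step of the proof of Proposition~\ref{prop:verticaltrans}. In particular your expected expression $(1-2\tau)\,b([v,\xi],u)$ is zero, because $[v,\xi]\in\mathfrak{m}_2\perp\mathfrak{m}_1(q)$. Non-integrability of $\mathcal{H}$ concerns $[\mathfrak{m}_2,\mathfrak{m}_2]$, not $[\mathfrak{m}_2,\mathfrak{k}(q)]$. So, as written, nothing in your argument forces $\tau=\tfrac12$.

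The obstruction actually sits in the two steps you treat as routine: $c$ is determined twice, and you should not say the second determination is ``consistent''. The fiber block (only its $\mathfrak{m}_1(q)\times\mathfrak{m}_1(q)$ part is nontrivial) gives $c=1-\tau_*/\tau$. The horizontal block is
\[
2\langle\nabla_{v^*}X^*,w^*\rangle_o=2\,(c-1+2\tau_*)\,b([v,w],W(q))-2\,b([Z_1,w],v),\qquad v,w\in\mathfrak{m}_2 .
\]
The rotation trick then forces $Z_1=0$ and $c=1-2\tau_*$; here you use that $b([v,w],W(q))\neq0$ for suitable $v,w$, since $\operatorname{ad}W(q)$ is injective on $\mathfrak{m}_2$. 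The two values of $c$ agree exactly when $\tau=\tfrac12$.

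Substituting the first value, the constant becomes $\tfrac{\tau_*}{\tau}(2\tau-1)$. This is the paper's identity $b([Z,w],v)=(1-2\tau)\,b([v,w],W(q))$ after rescaling $X$ by $c-1=-\tau_*/\tau$. The paper avoids this bookkeeping by normalizing $X=\xi(q)+Z$, with $\xi(q)=W(q)+\alpha V(q)$ the fiber transvection. That absorbs the fiber block and gives $Z\in\mathfrak{h}_1$ from the start.

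For the rotation trick, use the whole last-coordinate $\mathsf{Sp}_1$. It fixes $Z_1$, preserves $\mathfrak{m}_2$ and $b$, and moves $W(q)$ over its sphere. If by ``extra $\mathsf{U}_1$'' you meant the isotropy circle $\exp\mathbb{R}(V(q)+W(q))$, the elements commuting with it fix $W(q)$, and the trick gives nothing.

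With this correction, sufficiency at $\tau=\tfrac12$ follows as well, since the mixed block never contributes.
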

	
	\begin{proof}
		Following the notation introduced in Section~\ref{subsubsec:Spn-inv}, we  study simultaneously the index of symmetry of the metrics satisfying $\tau_1>\tau=\tau_2=\tau_3\neq1$ and of those satisfying $1\neq\tau=\tau_1=\tau_2>\tau_3$, that is, Case (A) and Case (B), respectively.
		
		Let $X^*$ be an infinitesimal transvection at $o\in \mathsf{G}(q)/\mathsf{H}(q)$ with $X \in \mathfrak{g}(q)$. The restriction of $X^*$ to the fiber $\mathsf{S}^3_{\tau_1,\tau_{2},\tau_{3}}$ through $o$ must also be an infinitesimal transvection of $\mathsf{S}^3_{\tau_1,\tau_{2},\tau_{3}}$ by Gauss formula since the fiber is totally geodesic. The non-trivial transvection of the fiber $\mathsf{S}^3_{\tau_1,\tau_{2},\tau_{3}}$  is generated (up to scalar multiplication) by a linear combination of the vertical generator of $\Sp_{n+1}$ restricted to the fiber, $W(q)$ and $V(q)$, see~\cite[p.~35]{ORT}.   Specifically, this vector is given by $\xi(q) = W(q)+\alpha V(q)$, with $\alpha=1-\frac{2\tau}{\tau(q)}$, where $\tau(q)=\tau_{1}$ in Case (A) and $\tau(q)=\tau_{3}$ in Case (B).
		
		As the fiber is totally geodesic, a similar argument as in the second paragraph of the proof of Theorem~\ref{th:sph3param} implies that, up to scaling, $X=\xi(q)+Z$ for some $Z\in\g{h}(q)$ satisfying that $(\nabla Z^*)_o$ vanish identically when restricted to the tangent space of the fiber at $o$. By Equation~\eqref{eq:LCconnectionkillingind}, it follows that $[Z,Y]=0$ for every $Y\in\g{m}_0(q)\oplus\g{m}_1(q)$, which implies that $Z\in\g{h}_1\cong\g{sp}_n$. Also, we claim that
		\begin{equation}
			\label{eq:conservedqsp2}
			b([Z,w],v)=(1-2\tau)b([v,w],W(q)) \qquad \text{for all $v,w\in\mathfrak{m}_2$}.
		\end{equation}
		Let $v,w\in\mathfrak{m}_2$, then using Equation~\eqref{eq:LCconnectionkillingindarb},	we have				\begin{equation*}
			\label{eq:Xtrans2}
			\begin{aligned}
				0 &=2\langle (\nabla_{v^*_{o}} X^*)_{o}, w^*_{o}\rangle \\
				&= - b([v,X],w) - b([X,w],v)-4\tau(q)b([v,w],\xi(q)_{\mathfrak{m}(q)})\\
				&=-2 b([X,w],v) - 4\tau b([v,w], W(q))\\
				&=-2  b([Z,w],v)+ 2(1-2\tau) b([v,w],W(q))				\end{aligned}
		\end{equation*}
		where we have used that $[v,w]$ is orthogonal to $V(q)$ and that the orthogonal projection of $\xi(q)$ to $\g{m}(q)=\g{m}_0(q)\oplus\g{m}_1(q)\oplus\g{m}_2$ is given by	\[\xi(q)_{\mathfrak{m}(q)}= \xi(q)_{\mathfrak{m}_0(q)} = \frac{b(\xi(q), V(q)-W(q))}{b(V(q)-W(q), V(q)-W(q))} (V(q)-W(q)) = \frac{\tau}{\tau(q)}(W(q)-V(q)).\]
		Let $\s{H}_1\cong\s{Sp}_n$ be the connected subgroup of $\s{G}(q)$ with Lie algebra $\g{h}_1$. It turns out that $\s{H}_1$ acts transitively on the unit sphere of $\g{m}_2$, and thus an analogous argument applied to Equation~\eqref{eq:conservedqsp2} as in the last two paragraphs of the proof of Theorem~\ref{th:sph3param} allows us to conclude that $X^*$ is a non-zero infinitesimal transvection at $o$  if and only if $\tau=\tfrac{1}{2}$. By Lemma~\ref{lem:transvS3red}, there is at most one non-trivial infinitesimal transvection up to scalar multiplication,  yielding the desired result.
	\end{proof}
	
	\begin{remark}
		It can be checked that the metrics with non-trivial index of symmetry appearing in Theorem~\ref{th:sph2param} correspond precisely to the naturally reductive metrics on this family of spheres.
	\end{remark}
	
	\begin{theorem}
		\label{th:cpindex}
		Let $M$ be isometric to $\C\mathsf{P}^{2n+1}_{\tau}$ with $\tau\neq1$. Then 	$i_{\mathfrak{s}}(M) =0$.$ $
	\end{theorem}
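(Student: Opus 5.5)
We work with the presentation $\C\mathsf{P}^{2n+1}_\tau=\mathsf{Sp}_{n+1}/(\mathsf{Sp}_n\times\mathsf{U}_1)$. For $\tau\neq 1$, Lemma~\ref{lem:Isometry_Group_Projective} shows that every Killing field is induced by an element of $\mathfrak{sp}_{n+1}$. The isotropy representation is $\mathfrak{m}=\mathfrak{m}_1\oplus\mathfrak{m}_2$, with $\dim\mathfrak{m}_1=2$ and $\dim\mathfrak{m}_2=4n$. These two modules are irreducible and inequivalent. The metric is not symmetric, so Theorem~\ref{th:2-isotropy_fibration}(i) gives only two possibilities: either $\mathfrak{s}_o=0$, $\mathfrak{s}_o=\mathfrak{m}_1$, or $\mathfrak{s}_o=\mathfrak{m}_2$. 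The plan is to rule out the last two.

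\emph{Excluding $\mathfrak{m}_2$.} Part (iii) of Theorem~\ref{th:2-isotropy_fibration} requires the distribution of symmetry to be integrable. The horizontal distribution $\mathfrak{m}_2$ is not integrable: since $\mathbb{H}\mathsf{P}^n$ is irreducible symmetric, $[\mathfrak{m}_2,\mathfrak{m}_2]_{\mathfrak{m}_1}\neq 0$, and a direct check with the $v\in\H^n$ blocks confirms this. Hence $\mathfrak{s}_o\neq\mathfrak{m}_2$.

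\emph{Excluding $\mathfrak{m}_1$.} Suppose $\mathfrak{s}_o=\mathfrak{m}_1$. The fiber is the totally geodesic round $\mathsf{S}^2=\mathsf{Sp}_1/\mathsf{U}_1$, and $(\mathsf{K},\mathsf{H})=(\mathsf{Sp}_n\times\mathsf{Sp}_1,\mathsf{Sp}_n\times\mathsf{U}_1)$ is a symmetric pair. Moreover, $\mathfrak{m}_1$ is $\mathsf{H}^0$-irreducible, since $\mathsf{U}_1$ rotates the $j,k$-plane, and $\dim F=2$. We then want to apply Proposition~\ref{prop:verticaltransconversedimge2}, which forces $\lambda=2$, where the canonical variation parameter is $\lambda=2\tau$. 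It remains to check its hypothesis (ii). Let $\xi\in\mathfrak{g}$ induce a transvection of $M$ at $o$ with $\xi^*_o\in\mathfrak{m}_1$. Its restriction to the totally geodesic fiber is a transvection of the fiber, by the Gauss formula (as in Lemma~\ref{lem:transvS3red}). One then shows that $\xi$ can be taken in $\mathfrak{k}$, by subtracting an element of $\mathfrak{h}$ as in Equation~\eqref{eq:Zdef}. Since $(\mathsf{K},\mathsf{H})$ is symmetric, the proposition applies and gives $\tau=1$, which contradicts $\tau\neq 1$.

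The main obstacle is the reduction to $\xi\in\mathfrak{k}$. Here I would write $\xi=\xi_{\mathfrak{m}_1}+\xi_{\mathfrak{h}}+\xi_{\mathfrak{m}_2}$; since $\xi^*_o\in\mathfrak{m}_1$, we get $\xi_{\mathfrak{m}_2}=0$ automatically, so $\xi\in\mathfrak{k}$. As a fallback, I would argue directly in the style of Theorem~\ref{th:sph3param}. Write $X=\xi_1+Z$ with $\xi_1\in\mathfrak{m}_1$ and $Z\in\mathfrak{sp}_n\oplus\mathfrak{u}_1$. Computing $\langle\nabla_{v^*}X^*,w^*\rangle$ for $v,w\in\mathfrak{m}_2$ with Equation~\eqref{eq:L-C} gives
\[
b([Z,w],v)=(1-2\tau)\,b([v,w],\xi_1)\quad\text{for all }v,w\in\mathfrak{m}_2 .
\]
Conjugating by $\mathsf{Z}_{\mathsf{Sp}_{n+1}}(\mathsf{Sp}_n)\cong\mathsf{Sp}_1$ and averaging then forces $\tau=\tfrac12$, which is the value corresponding to $\lambda=1$. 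In that case $M$ is normal homogeneous, and one excludes it by Equation~\eqref{eq:sym_isotropy}: no nonzero vector of $\mathfrak{m}$ is fixed by $\mathsf{H}$. This leaves $\mathfrak{s}_o=0$, so $i_{\mathfrak{s}}(M)=0$.
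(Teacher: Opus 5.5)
Your main argument is correct and is essentially the paper's proof. Theorem~\ref{th:2-isotropy_fibration}(i) restricts $\mathfrak{s}_o$ to $0$, $\mathfrak{m}_1$ or $\mathfrak{m}_2$. Proposition~\ref{prop:verticaltransconversedimge2} with $\lambda=2\tau$ then excludes $\mathfrak{m}_1$ unless $\tau=1$. You also make explicit two points the paper leaves tacit: that the horizontal distribution is non-integrable, and that $\xi^*_o\in\mathfrak{m}_1$ forces $\xi_{\mathfrak{m}_2}=0$, hence $\xi\in\mathfrak{k}$, which verifies hypothesis (ii).

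You should, however, delete the fallback, because it is wrong as written.

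\begin{itemize}
\item Since the metric is $2\tau\,b$ on $\mathfrak{m}_1$, Equation~\eqref{eq:L-C} gives $b([Z,w],v)=(1-\tau)\,b([v,w],\xi_1)$, exactly as in Equation~\eqref{eq:conservedqsp}. The coefficient is not $(1-2\tau)$.
\item Before conjugating by $\mathsf{Sp}_1$, you must show $Z\in\mathfrak{sp}_n$. The $\mathfrak{u}_1$-component of $Z$ is not centralized by $\mathsf{Sp}_1$. It is killed by the fiber-transvection condition, as in the proof of Theorem~\ref{th:sph2param}.
\end{itemize}

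With these corrections, averaging gives $Z=0$ and then $\tau=1$, in agreement with the proposition, not $\tau=\tfrac12$. Your version would even rule out the transvection $\xi_1^*$ that Proposition~\ref{prop:verticaltrans} produces for the symmetric metric $\tau=1$. Once corrected, the detour through Equation~\eqref{eq:sym_isotropy} is unnecessary.
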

	\begin{proof}
		Notice that the isotropy representation splits into two irreducible inequivalent modules, and the vertical distribution has rank $2$. Moreover, for every $\tau\neq 1$,  $\C\mathsf{P}^{2n+1}_{\tau}$ is not locally symmetric or isometric to a Riemannian product and $\mathrm{Isom}^0(\C\mathsf{P}^{2n+1}_{\tau})=\mathsf{Sp}_{n+1}/\mathsf{\Delta\Z_2}$, see Lemma~\ref{lem:Isometry_Group_Projective}. Thus,  Proposition~\ref{prop:verticaltransconversedimge2} implies that we can extend the vertical transvections to $\C\mathsf{P}^{2n+1}_{\tau}$  if and only if $\tau=1$, which is the metric corresponding to the metric $\langle\cdot,\cdot\rangle$ with $\lambda=2$, see~Equation~\eqref{eq:metricCPn} yielding  that $i_{\mathfrak{s}}(M) =0$ unless $\tau=1$.
	\end{proof}
	\begin{proof}[Proof of Theorem~A]
		Consequently, the proof of Theorem~A follows by combining Theorem~\ref{th:hopfbergerindex},  Theorem~\ref{th:sph3param}, Theorem~\ref{th:sph2param} and Theorem~\ref{th:cpindex}.
	\end{proof}

	\enlargethispage{2\baselineskip}
\end{document}